\newtheorem{theorem}{Theorem}
\newtheorem{assumption}[theorem]{Assumption}
\newtheorem{definition}[theorem]{Definition}
\newtheorem{lemma}[theorem]{Lemma}
\newcommand{\Acal}{{\cal A}}
\newcommand{\Ical}{{\cal I}}
\newcommand{\Lcal}{{\cal L}}
\newcommand{\Scal}{{\cal S}}
\newcommand{\sign}{\text{sign}}   
\begin{document} 
\title{\LARGE An Extrapolated Iteratively Reweighted $\ell_1$ Method 
with    Complexity Analysis}
% Short title for running heads:
%\shorttitle{Preconditioning by inverting the Laplacian}

\author{
{\large
Hao Wang\thanks{Email: haw309@gmail.com}
} \\[2pt]
{\large School of Information Science and Technology, ShanghaiTech
University}
\\[6pt]
{\large
Hao Zeng\thanks{Email: zenghao@shanghaitech.edu.cn}
} \\[2pt]
{\large School of Information Science and Technology, ShanghaiTech
University}
\\[6pt]
{\large  and}\\[6pt]
{\large  Jiashan Wang}\thanks{Email: jsw1119@gmail.com}\\[2pt]
\large Department of Mathematics, University of Washington
}

\date{\large\today}
% Short list of authors for running heads:
%\shortauthorlist{Hao Wang et al. }% \emph{et al.}}

\maketitle

\begin{abstract}
The iteratively reweighted $\ell_1$ algorithm is a widely used method for solving various regularization problems, 
which generally minimize a differentiable loss function combined with a convex/nonconvex 
regularizer to induce sparsity in the solution. 
However, the convergence and the complexity of iteratively reweighted $\ell_1$ algorithms is generally difficult 
to analyze, especially for non-Lipschitz differentiable regularizers such as $\ell_p$ norm regularization with 
$0<p<1$.  In this paper,   we propose, analyze and test a reweighted $\ell_1$ algorithm combined with 
the extrapolation technique under the assumption of \emph{Kurdyka-\L ojasiewicz} (KL)  property  on the objective. 
Unlike existing iteratively reweighted $\ell_1$ algorithms with extrapolation, our method does not require 
the Lipschitz differentiability on the regularizers nor   the smoothing parameters in the weights 
bounded away from 0.    We show the proposed algorithm converges uniquely 
to a stationary point of the regularization problem and has local linear complexity---a much stronger result than existing 
ones.  Our numerical experiments show the efficiency of our proposed method.
 
\noindent{\bf Keywords:}  {$\ell_p$ regularization \and extrapolation techniques \and iteratively reweighted methods \and \emph{Kurdyka-\L ojasiewicz} 
\and non-Lipschitz regularization}
% \PACS{PACS code1 \and PACS code2 \and more}
% \subclass{MSC code1 \and MSC code2 \and more}
\end{abstract}

\section{Introduction}
\label{intro}
Recently, sparse regularization has received an increasing contemporary attentions among researchers due to its various important applications, e.g., compressed sensing, machine learning, and image processing  \cite{figueiredo2007gradient,lustig2007sparse,jaggi2011sparse,mairal2010online,mairal2007sparse,zeyde2010single,luo2017revisit}. The goal of sparse regularization is to find sparse solutions of a mathematical model  such that the  model performance can be better generalized to  future data.  A common approach of this regularization technique is to 
add a  regularizer term  to the objective such that most of the components in the resulted solution are zero. In this paper, 
we focus on the  $\ell_p$-norm regularization optimization problem of the following form
\begin{equation}
\tag{P}\label{prob.lp}
\min_{x\in \mathbb{R}^n} F(x):=f(x)+\lambda \|x\|^p_p,
\end{equation} 
where $f:\mathbb{R}^n\to \mathbb{R}$ is  continuously differentiable, $p\in(0,1)$  and $\lambda > 0$ is the prescribed  regularization parameter. The $\ell_p$ norm is defined as 
$\|x\|_p:= (\sum_{i=1}^n |x_i|)^{1/p}$.  Compared  with the $\ell_1$ regularizer, $\ell_p$ regularizer is often believed 
to be a better approximation to the $\ell_0$ regularizer, i.e., the number of nonzeros of the involved vector. However, due to the nonsmooth and non-Lipschitz differentiable nature  of  the $\ell_p$-norm,  this problem is difficult to handle and analyze. In fact, it has been proven in   \cite{ge2011note} to be  strongly NP-hard.

The iteratively reweighted $\ell_1$ (IRL1)  algorithm  \cite{candes2008enhancing,chen2010convergence,chartrand2008iteratively,yu2019iteratively,lu2014proximal,wang2019relating} has been widely applied to solve regularization problems to induce sparsity in the solutions.  It can easily handle 
  various  regularization terms including  $\ell_p$-norm, log-sum \cite{lobo2007portfolio}, SCAD \cite{fan2001variable}  and MCP \cite{zhang2010nearly} by approximating regularizer with a weighted $\ell_1$ norm in each iteration. 
  For example,  the technique proposed by   Chen  \cite{chen2010convergence} 
and Lai \cite{lai2011unconstrained}   adds  smoothing perturbation $\epsilon$  to each $|x_i|$ to formulate  
the $\epsilon$-approximation of the $\ell_p$ norm. In this case, the objective is replaced by 
\begin{equation}\label{l2-lp-appro1}
f(x)+ \lambda \sum_{i=1}^n \left( |x_i| + \epsilon_i \right)^p, 
\end{equation}
with prescribed  $\epsilon>0$.  At each iteration $x^k$, the 
iteratively reweighted $\ell_1$ method solves the 
subproblem  in which  each $ \left( |x_i| + \epsilon \right)^p$ is replaced by its linearization
\begin{equation}\label{first.l2}
p(|x_i^k|+\epsilon_i)^{p-1}|x_i|.
\end{equation}
In this case, large $\epsilon$ can smooth out many local minimizers, while small values make the subproblems difficult to solve and the 
algorithm easily  trapped into bad local minimizers.
To obtain an accurate approximation of \eqref{prob.lp},  Lu \cite{lu2014iterative} proposed a dynamic updating strategy 
to drive  $\epsilon$ from an initial relatively large value to 0   as $k\to \infty$. 
Recently, Wang et al.  \cite{wang2019relating}   show  the property that the iterates generated by the 
 IRL1 algorithm have local stable sign value. Based on this,  
 they  also present a novel  updating strategy for $\epsilon_i$ that    only drive $\epsilon_i$ associated with the  nonzeros in  the limit point 
  to 0 while keeping others  
 bounded away from 0.

Since  Nesterov \cite{nesterov1983method} first proposed the extrapolation techniques in gradient method, many works  focus  on analyzing and improving the  convergence rate  of the IRL1 algorithms. In this approach,  a linear combination of previous two steps are 
used to update next step.
Nesterov’s extrapolation techniques \cite{nesterov1983method,nesterov1998introductory, nesterov2009primal,nesterov2013gradient} have also been widely applied  
  to accelerate the performance of the first-order methods and convex composite optimization problems, for example \cite{auslender2006interior,becker2011templates,lan2011primal} and \cite{tseng2010approximation}.    During the past decade, it is proven 
  to be a   successful accelerating approach 
  when applied to various algorithms. For example, Amir and Teboulle  \cite{beck2009fast}  presented a fast iterative shrinkage-thresholding algorithm (FISTA) using this technique.  
  %Wen et al.  %\cite{wen2018proximal} presented the proximal difference-of-convex (DC) algorithm with extrapolation techniques. 
  %Nesterov  \cite{nesterov2013gradient}    proposed an accelerated gradient method for convex composite optimization problems.
  As for IRL1 methods,    Yu and   Pang  \cite{yu2019iteratively}   proposed several versions of IRL1 algorithms with extrapolation and analyzed the global convergence.

%\centerline{ \bf add more references as in Sec 3 , Para 1, in  Pang's paper. } 
%\centerline{\bf  For equation reference,  use eqref$\{ \}$  instead of ref$\{ \}$ }

In view of the success of IRL1 combined with the extrapolation techniques, in this paper we propose and analyze the 
 Extrapolated Proximal Iteratively Reweighed $\ell_1$ method (E-PIRL1) to solve the $\ell_p$-norm regularization problem. 
 We  show the global  convergence and the local complexity   of the proposed methods under the Kurdyka-\L ojasiewicz   (K\L)    property  \cite{boltedanlew17, Bolte2014}; this property is a mild condition  and generally believed to  capture a broad spectrum of the local geometries that a nonconvex function can have and  has been shown to hold ubiquitously for most practical functions. 
 In particular, we show that our method converges to a first-order optimal solution of the $\ell_p$ regularization problem and  local sublinear and linear convergence rates are established---a stronger result than most existing ones. 
 
 The proximal iteratively reweighted $\ell_1$ methods with  extrapolation by 
    Yu and   Pang in  \cite{yu2019iteratively}  can be an immediate and most related  predecessor of our proposed method.  
  The main  differences  of our work  can be summarized as follows. 
  \begin{enumerate}
  \item The algorithms in  \cite{yu2019iteratively}  are designed 
 for solving  problems with regularization term 
$\sum\limits_{i=1}^nr_i(|x_i|)$  where $r_i$ 
is assumed to be smooth on $\mathbb{R}_{++}$, concave and strictly increasing on $\mathbb{R}_+$ with   $r_i(0)=0$. 
However, it is assumed in \cite{yu2019iteratively} that    $\lim_{|x_i| \to0}r_i'(|x_i|)$   exits, meaning 
$r_i$ is Lipschitz differentiable on $\mathbb{R}_{++}$, which is not the case for $\ell_p$-norm regularization term. 

\item The algorithms in  \cite{yu2019iteratively}  can be extended to the  $\ell_p$ norm regularization problem by
 keeping the smoothing parameter $\epsilon$   bounded away from 0. In this case, the algorithms converge
 to the optimal solution of the approximated  $\ell_p$ regularization problem instead of the original 
 one.  In contrast, our algorithm drives $\epsilon\to 0$ over iterations so that 
 the optimal solution of the original problem can be obtained. %, which the second difference of our proposed method. 
\item The third difference is the local  convergence rate  provided in our work, whereas  no complexity analysis is provided in \cite{yu2019iteratively}; this
 is also the most important contribution of our work.
 \end{enumerate}

\subsection{Notation}
%Let  $\{-1,0,+1\}^n$ be the set of $n$-dimensional vectors with components
%chosen from $\{-1, 0, +1\}$. For $s\in \{-1, 0, +1\}^n$, define $\mathbb{R}^n_s = \{ x \in \mathbb{R}^n \mid 
%\sign(x) = s\}$. % In particular, $\mathbb{R}^n_{++} = \mathbb{R}^n_\mathbf{1}^n$.  
We denote $\mathbb{R}$ and $\mathbb{Q}$ as the set of real numbers and rational numbers. The set $\mathbb{R}^n$ is the real $n$-dimensional 
Euclidean space with $\mathbb{R}^n_+$ being the positive orthant in $\mathbb{R}^n$ and $\mathbb{R}^n_{++}$ the interior of $\mathbb{R}^n_+$. 
In $\mathbb{R}^n$, denote $\|\cdot\|_p$ as the $\ell_p$ norm with $p\in(0,+\infty)$, i.e., 
$\|x\|_p = \left(\sum_{i=1}^n |x_i|^p\right)^{1/p}$.   Note  that for $p\in(0,1)$, 
this  does not define a proper norm due to its lack of subadditivity. 
If function $f: \mathbb{R}^n \to \bar{ \mathbb{R}}:=\mathbb{R} \cup \{+\infty\}$
is convex, then the subdiferential of $f$ at $\bar x$ is given by 
\[ \partial f(\bar x):=\{ z \mid f(\bar x) + \langle z, x-\bar x\rangle \le f(x),  \  \forall x\in\mathbb{R}^n\}.\]
In particular, for $x\in \mathbb{R}^n$,  $\partial \|x\|_1=\{ \xi \in \mathbb{R}^n \mid  \xi_i \in \partial |x_i|, i=1,\ldots, n\}.$
Given a lower semi-continuous function $f$,  the limiting subdifferential at $a\in \text{dom}f$ is defined as 
\[ \bar\partial f(a) :=\{ z^* = \lim_{x^k\to a,  f(x^k)\to f(a)} z^k, \  z^k\in\partial_F f(x^k)\}\]
and the Frechet subdifferential of $f$  at $a$ is defined as 
\[ \partial_F f(a):=\{z\in\mathbb{R}^n \mid \liminf_{x\to a } \frac{f(x)-f(a)-\langle z, x-a\rangle}{\|x-a\|_2} \ge 0\}. \]
The Clarke subdifferential $\partial_c f$ is the convex hull of the limiting subdifferential. 
It holds true that $\partial_F f(a) \subset \bar\partial f(a) \subset \partial_cf(a)$.  
%If $f$ is convex, the convex subdifferential is defined as 
%\[ \partial f(a) = \{ z \mid f(x) - f(a) \ge \langle z, x-a\rangle.\]
For convex functions, $\partial f(a) = \partial_F f(a) = \bar \partial f(a) = \partial_c f(a)$ 
and for differentiable $f$, $  \partial_F f(a) = \bar \partial f(a) = \partial_c f(a) = \{\nabla f(a)\}$.

For $f: \mathbb{R}^n \to \mathbb{R}$ and index sets $\Acal$ and $\Ical$ satisfying
 $\Acal \cup \Ical  = \{1, \ldots, n\}$,  
let $f(x_{\Acal})$ be  the function in the reduced space $\mathbb{R}^{|\Acal|}$ 
by fixing $x_i = 0, i\in \Ical$.  For $a, b\in\mathbb{R}^n$,  $a\le b$ means the inequality holds for each component, i.e., 
$a_i \le b_i $ for $i=1,\ldots, n$ and $a\circ b$ is the component-wise product of $a$ and $b$, i.e., $(a\circ b)_i = a_ib_i$ for $i=1,..., n$. 
 For a closed convex set $\chi \subset \mathbb{R}^n$, define the Euclidean distance of point $a\in\mathbb{R}^n$
to $\chi$ as $\text{dist}(a,  \chi) = \min_{b\in \chi} \| a - b\|_2$. 
Let $\{-1,0,+1\}^n$ be the set of vectors in $\mathbb{R}^n$ filled with elements in $\{-1,0,+1\}$. 
The support of $x\in\mathbb{R}^n$ is defined as the set 
$\{ i \mid x_i \neq 0, i=1,...,n\}$.

\subsection{Kurdyka-\L ojasiewicz property}\label{sec.KL}

Kurdyka-\L ojasiewicz property is applicable to a wide range of problems such as nonsmooth semi-algebraic minimization problem \citep{Bolte2014}, 
and serves as a basic assumption to guarantee the convergence of many algorithms.  For example, 
  a series of convergence results for gradient descent methods are proved 
  in  \cite{attouch2013convergence}  under the assumption that the objective satisfies the KL property.  
The definition of  Kurdyka-\L ojasiewicz property  is given below.

\begin{definition}[Kurdyka-\L ojasiewicz property]\label{df:KL}
	The function $f:\mathbb{R}^n\to \mathbb{R}\cup\{+\infty\}$ is said to have the Kurdyka-\L ojasiewicz property at $x^*\in \text{dom}\bar\partial f$ if there exists $\eta\in (0,+\infty]$, a neighborhood $U$ of $x^*$ and a continuous concave function $\phi:[0,\eta) \to \mathbb{R}_+$ such that:
	\begin{enumerate}
		\item[(i)] $\phi(0)=0$,
		\item[(ii)] $\phi$ is $C^1$ on $(0,\eta)$,
		\item[(iii)] for all $s\in(0,\eta)$, $\phi'(s)>0$,
		\item[(iv)] for all $x$ in $U\cap [f(x^*)<f<f(x^*)+\eta]$, the Kurdyka-\L ojasiewicz inequality holds
		\[
		\phi'(f(x)-f(x^*))\text{dist}(0, \bar\partial f(x))\geq 1.
		\]
	\end{enumerate}   
\end{definition}
If $f$ is smooth, then condition (iv) reverts to  \cite{attouch2013convergence}
%	\[
%		\phi'(F(x,0)-F(x^*,0))\| \nabla F(x,\epsilon)\|_2\geq 1,
%		\]
\[\| \nabla (\phi\circ f)(x)\| \ge 1.\]

Of particular interests is the class of \emph{Semialgebraic functions}, which   satisfies KL property and 
  covers most common mathematical programming objectives \cite{boltedanlew17,bolte2007clarke}.   
The definition of semialgebraic functions is provided below. 

 \begin{definition}[Semi-algebraic functions] A subset of $\mathbb{R}^n$ is called semi-algebraic if it can be written as 
 a finite union of sets of the form 
 \[\{ x\in \mathbb{R}^n : h_i(x) = 0, \  q_i(x)<0, \  i=1,\ldots, p\},\]
 where $h_i, q_i$ are real polynomial functions.  A function $f: \mathbb{R}^n \to \mathbb{R}\cup\{+\infty\}$ is semi-algebraic 
 if its graph is a semi-algebraic subset of $\mathbb{R}^{n+1}$.  
 \end{definition} 
  Semi-algebraic functions satisfy KL property with $\phi(s) = cs^{1-\theta}$, for some $\theta\in[0,1)\cap \mathbb{Q}$ and some $c> 0$ (see \citep{boltedanlew17,bolte2007clarke}),  and  finite sums of semi-algebraic 
 functions are semi-algebraic. 
  This nonsmooth result 
 generalizes the famous {\L}ojasiewicz inequality for real-analytic function \citep{lojasiewicz1963propriete}.

\section{Proximal Iteratively Reweighed $\ell_1$ Method with Extrapolation }\label{sec.proximal}

In this section, we propose   an \emph{e}xtrapolated  \emph{i}teratively \emph{r}eweighted 
$\ell_1$ algorithm, hereinafter named as EIRL1.  
The framework of this algorithm is presented in Algorithm \ref{alg.acc}.
\begin{algorithm}[htp]
	\caption{ Extrapolated Proximal Iteratively Reweighted $\ell_1$  Algorithm}
	\label{alg.acc}
	\begin{algorithmic}[1]
		\STATE{\textbf{Input:} $\mu\in(0,1)$, $\beta> L_f$, $\epsilon^0\in\mathbb{R}^n_{++}$, $0 \le \alpha^k\le \bar \alpha < 1$ and $x^0 $.}
		\STATE{\textbf{Initialize: set $k=0$, $x^{-1}=x^0$}. }
		\REPEAT
		%\STATE Choose $\xi^{k}\in \partial \|x^k\|_1$.
		\STATE Compute new iterate: 
		\begin{align}
	    w_i^k & = p(|x^k_i|+\epsilon^k_i)^{p-1},\\
		y^k & = x^k+\alpha^k(x^k-x^{k-1}),\label{eq:acc1}  \\ 
		x^{k+1} & \gets \underset{x\in \mathbb{R}^n}{\text{argmin}} \   \big\{   \nabla f(y^k)^T x  + \frac{\beta}{2}\|x-y^k\|^2 +\lambda \sum_{i=1}^{n}w_i^k|x_i|\big\},  \label{eq:acc2}
		\end{align}
		\STATE Choose $\epsilon^{k+1}\leq  \mu \epsilon^k$ and $0 \le \alpha^k\le \bar \alpha < 1$.
		\STATE Set $k\gets k+1$. 
		\UNTIL{convergence}
	\end{algorithmic} 
\end{algorithm}

Define the smooth approximation $F(x, \epsilon)$ of $F(x)$ with smoothing parameter $\epsilon$ as 
\[
F(x, \epsilon) := f(x)+\lambda \sum_{i=1}^n(|x_i|+\epsilon)^p
\]
and define the function of combining the  objective   with a proximal term as  
\[\begin{aligned}
\psi(x,y,\epsilon) := F(x,\epsilon) +\frac{\beta}{2}\|x-y\|_2^2,
\end{aligned}
\]
Before proceeding to the convergence analysis,  we first provide some  properties 
of our proposed method.  In the remainder of this paper, we make the following assumptions 
about $f$ and $F$. 
\begin{assumption}\label{ass.lip}
\begin{enumerate}
\item[(i)]  $f$ is Lipschitz differentiable with constant $L_f \ge 0$.
\item[(ii)] The initial point $(x^0, \epsilon^0)$ and $\beta$  are chosen  such that    
	$ \Lcal(F^0):= \{ x \mid F(x)  \le F^0\}$ is bounded where $F^0 := F(x^0, \epsilon^0)$ and 
	$\beta > L_f$.
\end{enumerate}
\end{assumption}

%\subsection{Local stable support and sign}

The following properties  hold true for Algorithm \ref{alg.acc}.

\begin{lemma}\label{lem.acc.gc}
	Suppose Assumption \ref{ass.lip}   holds true and 
	$\{x^k\}$ is generated by Algorithm \ref{alg.acc} for solving \eqref{prob.lp}. Then the following statements hold.
	\begin{enumerate}
		\item[(i)] $ \psi(x^k,x^{k-1},\epsilon^k)  -  \psi(x^{k+1},x^{k},\epsilon^{k+1})  \ge   \frac{1}{2}\beta(1-\bar \alpha^2)\|x^k-x^{k-1}\|^2$. 
		\item[(ii)] The sequence $\{x^k\} \subset \Lcal(F^0)$ and is bounded.
		\item[(iii)] $\lim\limits_{k\to \infty}\|x^{k+1}-x^k\|_2 = 0$.
		\item[(iv)] $\lim\limits_{k\to \infty}\|y^k-x^k\|_2 = 0$ and $\lim\limits_{k\to \infty}\|y^{k-1}-x^k\|_2 = 0$.
	\end{enumerate}
\end{lemma}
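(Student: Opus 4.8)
The plan is to treat part (i) as the engine and obtain (ii)--(iv) as consequences. For (i), the starting point is that the subproblem \eqref{eq:acc2} minimizes the $\beta$-strongly convex function $Q_k(x) := \langle \nabla f(y^k), x\rangle + \frac{\beta}{2}\|x-y^k\|^2 + \lambda\sum_i w_i^k|x_i|$, since the quadratic proximal term supplies modulus $\beta$ and the weighted $\ell_1$ term is convex. Because $x^{k+1}$ is its minimizer, strong convexity yields the three-point inequality $Q_k(x^k)-Q_k(x^{k+1}) \ge \frac{\beta}{2}\|x^k-x^{k+1}\|^2$. The goal is to convert this into a genuine decrease of $F$, which I would do by bounding the smooth and nonsmooth parts separately.

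For the smooth part I would apply the descent lemma to $f$ about the extrapolation point $y^k$, using Assumption \ref{ass.lip}(i), to upper bound $f(x^{k+1})$ by its linearization at $y^k$ plus $\frac{L_f}{2}\|x^{k+1}-y^k\|^2$, and then absorb this quadratic into the proximal term via $\beta>L_f$. For the nonsmooth part, the key observation is that $w_i^k=p(|x_i^k|+\epsilon_i^k)^{p-1}$ is exactly the derivative of the concave map $t\mapsto(t+\epsilon_i^k)^p$ at $t=|x_i^k|$, so the gradient inequality for concave functions gives $(|x_i^{k+1}|+\epsilon_i^k)^p \le (|x_i^k|+\epsilon_i^k)^p + w_i^k(|x_i^{k+1}|-|x_i^k|)$; hence the linearized weighted $\ell_1$ term dominates the true regularizer evaluated at $x^{k+1}$ with smoothing $\epsilon^k$. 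Finally, since $(|x|+\epsilon)^p$ is nondecreasing in $\epsilon$ and $\epsilon^{k+1}\le\mu\epsilon^k\le\epsilon^k$, one has $F(x^{k+1},\epsilon^{k+1})\le F(x^{k+1},\epsilon^k)$, so the estimate obtained at smoothing $\epsilon^k$ also controls the quantity at $\epsilon^{k+1}$. Chaining these bounds and recalling $x^k-y^k=-\alpha^k(x^k-x^{k-1})$ with $\alpha^k\le\bar\alpha$, I would reach $F(x^k,\epsilon^k)-F(x^{k+1},\epsilon^{k+1}) \ge \frac{\beta}{2}\|x^{k+1}-x^k\|^2 - \frac{\beta}{2}\bar\alpha^2\|x^k-x^{k-1}\|^2$; adding $\frac{\beta}{2}(\|x^k-x^{k-1}\|^2-\|x^{k+1}-x^k\|^2)$ to both sides turns this into the claimed inequality for $\psi$.

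The hard part will be the smooth term: after linearizing $f(x^{k+1})$ at $y^k$, the cross terms force a comparison of $f(x^k)$ with its own linearization at $y^k$, i.e. control of the Bregman residual $D_f(x^k,y^k) = f(x^k)-f(y^k)-\langle\nabla f(y^k), x^k-y^k\rangle$. When $f$ is convex this residual is nonnegative and the clean constant $\frac{1}{2}\beta(1-\bar\alpha^2)$ falls out directly; for merely Lipschitz-differentiable $f$ one only has $D_f(x^k,y^k)\ge -\frac{L_f}{2}\|x^k-y^k\|^2$, which worsens the coefficient of $\|x^k-x^{k-1}\|^2$. I therefore expect the argument to rely on convexity of $f$ (or on a step-size restriction linking $\bar\alpha$ to $\beta$ and $L_f$) to recover precisely the stated constant.

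Given (i), the remaining parts are routine. For (ii), since $x^{-1}=x^0$ we have $\psi(x^0,x^{-1},\epsilon^0)=F^0$, and (i) shows $\{\psi(x^{k+1},x^k,\epsilon^{k+1})\}$ is nonincreasing; because $F(x^k)\le F(x^k,\epsilon^k)\le\psi(x^k,x^{k-1},\epsilon^k)\le F^0$ (the first inequality from $(|x|+\epsilon)^p\ge|x|^p$), every $x^k$ lies in $\Lcal(F^0)$, which is bounded by Assumption \ref{ass.lip}(ii). For (iii), I would sum the inequality of (i) over $k$; the left side telescopes and $\psi$ is bounded below (by $\min_{\Lcal(F^0)}F>-\infty$, as $F$ is continuous on the compact level set and $\|\cdot\|_p^p\ge0$), so $\sum_k\|x^k-x^{k-1}\|^2<\infty$ and hence $\|x^{k+1}-x^k\|\to0$, using $\beta(1-\bar\alpha^2)>0$. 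Part (iv) then follows from (iii) and the triangle inequality, since $\|y^k-x^k\|=\alpha^k\|x^k-x^{k-1}\|\le\bar\alpha\|x^k-x^{k-1}\|$ and $\|y^{k-1}-x^k\|\le\|x^k-x^{k-1}\|+\bar\alpha\|x^{k-1}-x^{k-2}\|$.
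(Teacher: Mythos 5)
Your proposal matches the paper's proof essentially step for step: the strong-convexity (three-point) inequality for the subproblem minimizer, the concave gradient inequality for $t\mapsto(t+\epsilon_i^k)^p$ giving $(|x_i^{k+1}|+\epsilon_i^k)^p\le(|x_i^k|+\epsilon_i^k)^p+w_i^k(|x_i^{k+1}|-|x_i^k|)$, monotonicity in $\epsilon$, the descent lemma absorbed via $\beta>L_f$, and then level-set boundedness, telescoping, and the triangle inequality for (ii)--(iv). Your concern about the smooth term is well-founded: the paper closes exactly that step by invoking ``the convexity of $f$'' to get $f(y^k)+\langle\nabla f(y^k),x^k-y^k\rangle\le f(x^k)$, even though convexity is not listed in Assumption \ref{ass.lip}, so the clean constant $\tfrac{\beta}{2}(1-\bar\alpha^2)$ does rest on precisely the hypothesis (or a $\bar\alpha$--$\beta$--$L_f$ restriction) you identified.
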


\begin{proof}
	%For the proof of (i), we have that
	%	\begin{equation}\label{eq.abc}
	%	\begin{aligned}
	%	f(y)\leq &\ f(x) +   \nabla f(x)^T( y-x) + \frac{L_f}{2} \|x-y\|^2\\
	%	\le &\  f(x) +   \nabla f(x)^T( y-x) + \frac{\beta}{2} \|x-y\|^2
	%	\end{aligned}
	%	\end{equation} 
	% for all  $x,y \in\mathbb{R}^n$ 
	%	by the differentiability of  $f$. 
(i)   Since $x^{k+1}$ is the optimal solution of subproblem \eqref{eq:acc2}, there exists $\xi^{k+1} \in \partial |x^{k+1}| $ such that 
	\begin{equation}\label{eq:accgc1}
	0= \nabla f(y^k) +\beta(x^{k+1}-y^k)+ \lambda w^k\circ \xi^{k+1}, 
	\end{equation}
	which combined with the  strongly convexity of  \eqref{eq:acc2} yields  
	\begin{equation}\label{eq:accgc2}
	\begin{aligned}
	&\ \langle \nabla f(y^k), x^{k+1} \rangle + \frac{\beta}{2}\|x^{k+1}-y^k\|^2 + \lambda \sum_{i=1}^{n}w_i^k|x_i^{k+1}|\\
	\leq &\ \langle\nabla f(y^k), x^{k} \rangle + \frac{\beta}{2}\|x^{k}-y^k\|^2 + \lambda \sum_{i=1}^{n}w_i^k|x_i^k| 
	-\frac{\beta}{2}\|x^{k+1}-x^k\|^2.
	\end{aligned}
	\end{equation}
	From the concavity of $a^p$ on $\mathbb{R}_{++}$, % yields that $a_1^p \le a_2^p + pa_2^{p-1}(a_1- a_2)$ for any $a_1, a_2 \in \mathbb{R}_{++}$, implying
	we know  for any $i\in \{1,\ldots, n\}$
	\[\begin{aligned}
	(|x_i^{k+1}|+\epsilon_i^k)^p \le &\ (|x_i^k |+\epsilon_i^k)^p + p (|x_i^k |+\epsilon_i^k)^{p-1} (|x_i^{k+1} | - |x_i^k|)\\
	= &\ (|x_i^k |+\epsilon_i^k)^p +  w_i^k (|x_i^{k+1} | - |x_i^k|).
	\end{aligned} \] 
	%From the convexity and subgradient of $|a|$ on $\mathbb{R}$,  there exists $g\in \partial |a_2|$ such that $|a_1|\geq |a_2|+ g (a_1-a_2).$ Then, we obtain that 
	%$|x_i^{k+1}|-|x_i^k|\leq \xi^{k+1}(x_i^{k+1}-x_i^k)$
	Summing the above inequality over all $i$ yields 
	\begin{equation}\label{eq:accgc3}
	\begin{aligned}
	\sum_{i=1}^n ( |x_i^{k+1}|+\epsilon_i^k)^p   &\le  \sum_{i=1}^n (|x_i^k |+\epsilon_i^k)^p +  \sum_{i=1}^n w_i^k (|x_i^{k+1} | - |x_i^k|).
	%&\leq \sum_{i=1}^n (|x_i^k |+\epsilon_i^k)^p +  \sum_{i=1}^n w_i^k \xi^{k+1} (x_i^{k+1}- x_i^k)
	\end{aligned}
	\end{equation}
	Combining \eqref{eq:accgc2} with \eqref{eq:accgc3}, 
	\begin{equation}\label{eq:accgc4}
	\begin{aligned}
	&\ \langle \nabla f(y^k), x^{k+1} \rangle + \frac{\beta}{2}\|x^{k+1}-y^k\|^2 + \lambda \sum_{i=1}^n ( |x_i^{k+1}|+\epsilon_i^k)^p \\
	\leq 	&\  \langle\nabla f(y^k), x^{k} \rangle + \frac{\beta}{2}\|x^{k}-y^k\|^2 + \lambda \sum_{i=1}^{n}(|x_i^k |+\epsilon_i^k)^p
	-\frac{\beta}{2}\|x^{k+1}-x^k\|^2. 
	\end{aligned}
	\end{equation}
	It then follows that 
	\begin{equation*}\label{eq:accgc6}
	\begin{aligned}
	&\ F(x^{k+1},\epsilon^{k+1}) \\
	= &\ f(x^{k+1})+\lambda \sum_{i=1}^{n} (|x_i^{k+1}|+\epsilon_i^{k+1})^p\\
	\leq &\ f(y^k)+\langle \nabla f(y^k), x^{k+1}-y^k\rangle +\frac{L_f}{2}\|x^{k+1}-y^k\|^2+\lambda \sum_{i=1}^{n} (|x_i^{k+1}|+\epsilon_i^{k})^p\\
	\leq &\ f(y^k)+\langle \nabla f(y^k), x^{k+1}-y^k\rangle +\frac{\beta}{2}\|x^{k+1}-y^k\|^2+\lambda \sum_{i=1}^{n} (|x_i^{k+1}|+\epsilon_i^{k})^p\\
	\leq&\ f(y^k)+\langle \nabla f(y^k), x^{k}-y^k\rangle +\frac{\beta}{2}\|x^{k}-y^k\|^2+\lambda \sum_{i=1}^{n} (|x_i^{k}|+\epsilon_i^{k})^p-\frac{\beta}{2}\|x^{k+1}-x^k\|^2 \\
	\leq&\ f(x^k) + \lambda \sum_{i=1}^{n} (|x_i^{k}| +\epsilon_i^{k})^p +\frac{\beta}{2}\|x^{k}-y^k\|^2-\frac{\beta}{2}\|x^{k+1}-x^k\|^2\\
	=&\ F(x^k,\epsilon^k)+ \frac{\beta}{2}\|x^{k}-y^k\|^2-\frac{\beta}{2}\|x^{k+1}-x^k\|^2,
	\end{aligned}
	\end{equation*}
	where the first inequality follows from the Lipschitz differentiability of $f$, 
	the second inequality is by $\beta > L_f$, and 
	the  third  inequality follows from \eqref{eq:accgc4} and the last inequality is by the convexity of  $f$. 
	This means that 
	\[
	F(x^{k+1},\epsilon^{k+1})\leq F(x^{k},\epsilon^{k})+ \frac{\beta}{2}(\alpha^k)^2\|x^{k}-x^{k-1}\|^2-\frac{\beta}{2}\|x^{k+1}-x^k\|^2
	\]
	by the definition of $y^k$, which implies  that  
	\begin{equation}\label{eq:accgc5}
	\begin{aligned}
	   & \  \psi(x^k, x^{k-1}, \epsilon^k) - \psi(x^{k+1}, x^k, \epsilon^{k+1}) \\
	=& \ F(x^{k},\epsilon^{k})+\frac{\beta}{2}\|x^{k}-x^{k-1}\|^2 -\Big[F(x^{k+1},\epsilon^{k+1})+\frac{\beta}{2}\|x^{k+1}-x^{k}\|^2\Big]\\
	\geq&\ \frac{\beta}{2}(1-(\alpha^k)^2)\|x^k-x^{k-1}\|^2\\
	\ge & \  \frac{\beta}{2}(1-\bar \alpha^2)\|x^k-x^{k-1}\|^2
	\end{aligned}
	\end{equation} 
	by $\{\alpha^k\}\subset [0,\bar \alpha)$.  We then deduce from \eqref{eq:accgc5} and $0 < \bar \alpha < 1$ that the sequence $\{F(x^k,\epsilon^{k})+ \frac{\beta}{2}\|x^{k}-x^{k-1}\|^2 \}$ is monotonically decreasing. This proves part (i).

 (ii)   With $x^0=x^{-1}$, we know that for all $k\geq 0$,
	\[
	F(x^k) \le F(x^k,\epsilon^k)\leq F(x^k,\epsilon^k)+\frac{\beta}{2}\|x^k-x^{k-1}\|^2 \leq F(x^0,\epsilon^0)
	\]
	Under Assumption \ref{ass.lip}(ii), we know that   $\{x^k\} \subset \Lcal(F^0)$ and  is bounded.  This completes the proof of  part (ii).

  (iii) 	Summing both side of \eqref{eq:accgc5} from $0$ to $t$, we obtain that 
	\[
	\frac{\beta}{2}\sum_{k=0}^{t}(1-\bar \alpha^2)\|x^k-x^{k-1}\|^2  \le F(x^0,\epsilon^0)-F(x^t,\epsilon^t) \le F(x^0,\epsilon^0)- \underline 
	F,
	\]
	yielding   $\lim\limits_{k\to \infty}\|x^{k+1}-x^k\|=0$. Therefore, part (ii) is true.

(iv)  This part  is straightforward by noticing that 
	\[\begin{aligned}
	y^k - x^k & = \alpha^k(x^k-x^{k-1})\to 0\\  
	y^{k-1} - x^k & = (x^{k-1} - x^k) + \alpha^k (x^{k-1} - x^{k-2})\to 0.
	\end{aligned}\]
	from part (iii). 
\end{proof}

% For Algorithm \ref{alg.acc}, it is obvious that Assumption Assumption \ref{ass.basic} still holds true.  
Using similar arguments from \cite{wang2019relating} and  Lemma \ref{lem.acc.gc}, we can also obtain results of  local stable support and sign as shown in  \cite{wang2019relating}, which are listed   below. 
 It shows that after some iteration $K$, $\{x^k\}_{k\ge K}$ stays in the same orthant, and the nonzero components 
 are bounded away from 0.   
\begin{theorem} \label{thm.also.stable}	 
	Suppose Assumption \ref{ass.lip}  is true and 
	$\{x^k\}$  is generated by Algorithm \ref{alg.acc}.  There then exists  $C>0$ and $K\in\mathbb{N}$ such that the 
	following statements hold true. 
	\begin{enumerate} 
		\item[(i)]  If $w_i^{\tilde k} > C/\lambda$, then  $x_i^k \equiv 0$ for all $k > \tilde k$. 
%		Conversely, 
%		if there exists $\hat k>K$ for any $\tilde k \in\mathbb{N}$ such that $x_i^{\hat k} \neq 0$,  then $w_i^k \le  C/\lambda$ for all $k\in\mathbb{N}$. 
		\item[(ii)] There exists index sets $\Ical^*\cup\Acal^*=\{1,\ldots,n\}$ such that  
		% $\Ical(x^k) = \Ical^*$ and 
		$\Ical(x^k) \equiv \Ical^*$ and $\Acal(x^k) \equiv \Acal^*$ for any $k > K$. 
		\item[(iii)] For each $i \in \Ical^*$ and any $k > K$,
		\begin{equation}\label{eq.xboundeps}
		 \  |x_i^k| \ge \left(\frac{C}{p\lambda}\right)^{\frac{1}{p-1}}-\epsilon_i^k>0.
		\end{equation}
		\item[(iv)] For any limit point $x^*$ of $\{x^k\}$, it holds that $\Ical(x^*) = \Ical^*$, $\Acal(x^*) = \Acal^*$ and 
		\begin{equation}\label{eq.xbound}
		|x_i^*| \ge \left(\frac{C}{p\lambda}\right)^{\frac{1}{p-1}}, \  i\in\Ical^*.
		\end{equation}  
		 
		\item[(v)] There exists $s\in\{-1,0,+1\}^n$ such that $\text{sign}(x^k) \equiv s$ for any $k>K$. 
	\end{enumerate}
\end{theorem}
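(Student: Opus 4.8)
The plan is to read the first-order optimality condition \eqref{eq:accgc1} coordinatewise and combine it with the boundedness established in Lemma \ref{lem.acc.gc}. For any index $i$ with $x_i^{k+1}\neq 0$ the subgradient component $\xi_i^{k+1}$ equals $\text{sign}(x_i^{k+1})$, so \eqref{eq:accgc1} reads $\lambda w_i^k=|\nabla_i f(y^k)+\beta(x_i^{k+1}-y_i^k)|$. Since $\{x^k\}\subset\Lcal(F^0)$ is bounded by Lemma \ref{lem.acc.gc}(ii), the extrapolated points $\{y^k\}$ are bounded too, and continuity of $\nabla f$ supplies one constant $C$ with $|\nabla_i f(y^k)+\beta(x_i^{k+1}-y_i^k)|\le C$ for all $i,k$. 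Hence $w_i^k>C/\lambda$ forces $x_i^{k+1}=0$; this is the one-step form of part (i).

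First I would upgrade this one-step implication to the permanence asserted in (i). Whenever $x_i^{k+1}=0$ one has $|x_i^{k+1}|+\epsilon_i^{k+1}=\epsilon_i^{k+1}\le\mu\epsilon_i^k\le|x_i^k|+\epsilon_i^k$, and because $t\mapsto p\,t^{p-1}$ is decreasing for $p-1<0$ this yields $w_i^{k+1}\ge w_i^k$. An induction alternating the one-step implication $w_i^k>C/\lambda\Rightarrow x_i^{k+1}=0$ with this monotonicity keeps $w_i^k>C/\lambda$ for every $k\ge\tilde k$, and so forces $x_i^k\equiv 0$ for all $k>\tilde k$, proving (i).

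Parts (ii) and (iii) then follow from a clean coordinatewise dichotomy. Either some $w_i^{\tilde k}>C/\lambda$, in which case (i) places $i$ permanently in the zero pattern $\Acal^*$; or $w_i^k\le C/\lambda$ for all $k$, in which case inverting $p(|x_i^k|+\epsilon_i^k)^{p-1}\le C/\lambda$ (using $p-1<0$) gives exactly \eqref{eq.xboundeps}, which is strictly positive once $\epsilon_i^k$ is small. Since $\epsilon_i^k\to 0$, both alternatives stabilize after finitely many steps, so taking $K$ to be the finite maximum over the $n$ coordinates produces the fixed sets $\Ical^*,\Acal^*$ of (ii) together with the bound (iii). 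For (iv) I would pass to the limit along any convergent subsequence: coordinates in $\Acal^*$ are eventually zero and hence vanish, while for $i\in\Ical^*$ letting $\epsilon_i^k\to 0$ in \eqref{eq.xboundeps} gives \eqref{eq.xbound}, so $\Ical(x^*)=\Ical^*$ and $\Acal(x^*)=\Acal^*$. Finally (v) combines (iii) with Lemma \ref{lem.acc.gc}(iii): for $i\in\Ical^*$ the magnitudes $|x_i^k|$ are bounded below by a fixed $\delta>0$ for large $k$, so a sign flip would require a jump of size at least $2\delta$, contradicting $\|x^{k+1}-x^k\|_2\to 0$; coordinates in $\Acal^*$ carry sign $0$.

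The main obstacle I anticipate is not a single calculation but securing the uniform constant $C$ in the extrapolated setting—the appearance of $y^k$ rather than $x^k$ in \eqref{eq:accgc1} must be absorbed by boundedness of the extrapolation—and, more essentially, recognizing that the zero pattern is absorbing. The latter hinges on the monotonicity $w_i^{k+1}\ge w_i^k$ once a coordinate is zeroed, which is precisely where the decreasing schedule $\epsilon^{k+1}\le\mu\epsilon^k$ and the sign of $p-1$ enter decisively; everything else is bookkeeping over finitely many coordinates.
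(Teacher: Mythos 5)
Your proposal is correct and follows essentially the same route as the paper: the same uniform bound $C$ on $\|\nabla f(y^k)+\beta(x^{k+1}-y^k)\|_\infty$ from boundedness of $\{x^k\}$ and $\{y^k\}$, the same absorbing-zero-pattern induction via $w_i^{k+1}\ge w_i^k$ after a coordinate is zeroed, and the same step-size/sign-flip argument for (v). The only cosmetic difference is that you phrase the stabilization of the support in (ii) as a direct coordinatewise dichotomy, whereas the paper argues by contradiction on infinite subsequences where a coordinate alternates; both hinge on the identical observation that $\epsilon_i^k\to 0$ eventually forces $p(\epsilon_i^{\tilde k})^{p-1}>C/\lambda$ at any late zero iterate.
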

\begin{proof}
	
	The bounded $\{x^k\}$ (Lemma \ref{alg.acc} (ii)) implies that $\{y^k\}$ is also bounded. Then, there must exist $C>0$ such that for any $k$
	\begin{equation}\label{eq: bounded}
	\|\nabla f(y^k)+\beta (x^{k+1}-y^k)\|_{\infty}<C.
	\end{equation}
	
	If $w_i^{\tilde  k} > C/\lambda $ for some ${\tilde  k} \in \mathbb{N}$,  then  the optimality condition \eqref{eq:accgc1} 
	implies $x_i^{{\tilde  k} +1} = 0$. Otherwise we have 
	$ | \nabla f(y^{\tilde{k}})+\beta (x^{\tilde{k}+1}-y^{\tilde{k}}) | =   \lambda w_i^{\tilde  k}  > C $, 
	contradicting \eqref{eq: bounded}.  Monotonicity of $(\cdot)^{p-1}$ and  $0 + \epsilon_i^{\tilde k +1} \le |x_i^{\tilde k}| + \epsilon_i^{\tilde k}$ yield  
	\[w_i^{{\tilde  k}+1} =  p(0+ \epsilon_i^{{\tilde  k}+1})^{p-1}  \ge   p( |x_i^{\tilde k}| + \epsilon_i^{\tilde  k})^{p-1}  =   w_i^{\tilde  k} > C/\lambda.\]
	By induction  we know that  
	$x_i^k \equiv 0$ for any $k > \tilde  k$.  This completes the proof of (i).

	(ii)  Suppose by contradiction this statement is not true.  There exists $j \in \{1,\ldots, n\}$ such 
	that $\{x_j^k\}$ takes zero and nonzero values both for infinite times.  
	Hence, 
	there exists a subsequence $\Scal_1 \cup \Scal_2 = \mathbb{N}$ such that 
	$|\Scal_1|=\infty$, $|\Scal_2|=\infty$  and that  
	\[ x_j^k = 0, \forall k \in \Scal_1 \ \text{ and }\  x_j^k \neq 0, \forall k \in  \Scal_2.\]
	Since $\{ \epsilon_j^k \}_{\Scal_1}$ is monotonically decreasing to 0, there exists $\tilde k \in \Scal_1$ such that 
	\[ w_j^{\tilde k}  = p(|x_j^{\tilde k}| + \epsilon_j^{\tilde k} )^{p-1} = p (\epsilon_j^{\tilde k})^{p-1} > C/\lambda. \]
	It follows that $x_j^k \equiv 0$ for any $k > \tilde k$ by (i) which implies $\{ \tilde k + 1, \tilde k + 2, \ldots\} \subset \Scal_1$ and $|\Scal_2| < \infty$. This contradicts  the assumption    $|\Scal_2| = \infty.$ Hence, (ii) is true.

	(iii)  Combining  (i) and (ii), we know for any   $i\in \Ical^*$, $w_i^k \le  C/\lambda$, which is equivalent to \eqref{eq.xboundeps}. This proves (iii).

	(iv)  By (ii), $\Acal^*\subseteq \Acal(x^*)$ for any limit point $x^*$.  It follows from (ii) and (iii) that 
	$\Ical^* \subseteq \Ical(x^*)$ for any limit point $x^*$.  Hence, $\Acal^*= \Acal(x^*)$ and 
	 $\Ical^* = \Ical(x^*)$ for any limit point $x^*$ since $\Acal^*\cup \Ical^* = \{1,...,n\}$. 
	% It follows from part (i) and part (iii) that $\Ical(x^*) = \Ical^*$.  The bound in \eqref{eq.xbound} follows directly from \eqref{eq.xboundeps}. 
	
	(v) By (iii) and Lemma \ref{alg.acc}(iii), there exists sufficiently large   $\bar k  $, such that  for any $k> \bar k$
	\begin{align}
		 & \  |x_i^k| >   \  \bar \epsilon: =  \frac{1}{2} \left(\frac{C}{p \lambda }\right)^{\frac{1}{p-1}} , \quad  \forall i\in \Ical^*. \label{coro.support} \\ 
\text{ and }	&\ \|x^{k+1} - x^k\|_2 <  \ \bar \epsilon\label{contradiction here}
	\end{align}
		We prove (v) by contradiction. Assume there exists   $j\in\Ical^*$ such that  the sign of $x_j$ 
	changes after $\bar k$.  
	Hence there must be  $\hat k  \ge   \bar k  $ such that $x_j^{\hat k}x_j^{\hat k+1} < 0 $. 
	It follows that  
	\[  \|x^{\hat k+1} - x^{\hat k}\|_2  \ge  |x_j^{\hat k+1} - x_j^{\hat k}| = \sqrt{ (x_j^{\hat k+1} )^2   + (x_j^{\hat k})^2  - 2x_j^{\hat k} x_j^{\hat k+1} }
	> \sqrt{ \bar\epsilon ^2   + \bar\epsilon ^2}
	=    \sqrt{2}\bar\epsilon,\]
	where the last inequality is by \eqref{coro.support}.    
	This contradicts  with \eqref{contradiction here}; hence 
	$\{x^k\}_{k\ge \bar k}$ have the same sign.  
	Without loss of generality, we can reselect $K=\bar k$ and then (v)  holds true. 
\end{proof}

%\begin{proof} (i) Suppose by contradiction this statement is not true.  There must exists $j\in\{1,\ldots,n\}$ and 
%subsequence $\Scal \subset \mathbb{N}$ such that $\Scal$ and $\mathbb{N}\setminust \Scal$ are both infinite 
%and that 
%\[ x_j^k = 0, \forall k\in\Scal \  \text{ and } \  |x_j^k|>0, \forall k\in\mathbb{N}\setminus\Scal.\]
%Since $\{\epsilon_j^k\}_\Scal$ is monotinuclaly decreasing 
%

\section{Global convergence}

% Notice that Assumption Assumption \ref{ass.basic} now may not hold for $Q_k$.  
Defining $\chi$ as the set of all cluster points of 
$\{x^k\}$, we now show 
the global convergence of Algorithm \ref{alg.acc}. 

\begin{theorem}[Global convergence]\label{thm.global.acc} Suppose Assumption \ref{ass.lip}   holds true and 
	$\{x^k\}$ is generated by Algorithm  \ref{alg.acc}. 
	% and 
	% $\chi$ is the set of cluster points of  $\{x^k\}$. 
	The following statements hold true 
	\begin{enumerate}
		\item[(i)]  $F$ attains the same value at every cluster point of $\{x^k\}$, i.e., there exists  $\zeta \in \mathbb{R}$ 
		such that 
		$F(x^*,0)= \zeta$ for   any  $x^*\in \chi$. 
		\item[(ii)] 
		Each   point $x^*\in\chi$ is a stationary point of $F(x,0)$.
	\end{enumerate}
\end{theorem}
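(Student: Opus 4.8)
The plan is to treat the two claims separately, using the monotonicity from Lemma \ref{lem.acc.gc}(i) for part (i) and the stable-support structure of Theorem \ref{thm.also.stable} for part (ii).

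For part (i), I would first argue that $\{\psi(x^k,x^{k-1},\epsilon^k)\}$ converges. By Lemma \ref{lem.acc.gc}(i) it is nonincreasing, and since $\psi(x^k,x^{k-1},\epsilon^k)=F(x^k,\epsilon^k)+\frac{\beta}{2}\|x^k-x^{k-1}\|^2\ge F(x^k)\ge \underline F$ on the compact level set $\Lcal(F^0)$, it is bounded below; hence it converges to some $\zeta\in\mathbb{R}$. Using Lemma \ref{lem.acc.gc}(iii), $\frac{\beta}{2}\|x^k-x^{k-1}\|^2\to0$, so $F(x^k,\epsilon^k)\to\zeta$ as well. Now fix any cluster point $x^*\in\chi$ and a subsequence $x^{k_j}\to x^*$. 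Since $\epsilon^{k+1}\le\mu\epsilon^k$ with $\mu\in(0,1)$ forces $\epsilon^{k}\to0$, and $(x,\epsilon)\mapsto F(x,\epsilon)=f(x)+\lambda\sum_i(|x_i|+\epsilon_i)^p$ is jointly continuous, it follows that $F(x^{k_j},\epsilon^{k_j})\to F(x^*,0)$. Comparing the two limits gives $F(x^*,0)=\zeta$, independent of the chosen cluster point, establishing (i).

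For part (ii), the starting point is the subproblem optimality condition \eqref{eq:accgc1}, $0=\nabla f(y^k)+\beta(x^{k+1}-y^k)+\lambda w^k\circ\xi^{k+1}$ with $\xi^{k+1}\in\partial|x^{k+1}|$. The essential difficulty is that the weights $w_i^k=p(|x_i^k|+\epsilon_i^k)^{p-1}$ blow up to $+\infty$ precisely on the coordinates that vanish, so one cannot pass to the limit in \eqref{eq:accgc1} coordinate-by-coordinate without separating the support from the zero set. This is exactly where Theorem \ref{thm.also.stable} is indispensable: for $k>K$ the support $\Ical^*$, the zero set $\Acal^*$ and the sign vector $s$ are frozen, and the nonzero entries are uniformly bounded away from $0$. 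Fix a cluster point $x^*$ with $x^{k_j}\to x^*$; by Lemma \ref{lem.acc.gc}(iii)--(iv) we also have $x^{k_j-1}\to x^*$ and $y^{k_j-1}\to x^*$. Restricting \eqref{eq:accgc1} to a coordinate $i\in\Ical^*$ (where $\xi_i^{k+1}=s_i=\sign(x_i^*)$) and evaluating along the shifted subsequence $k=k_j-1$, the term $\beta(x_i^{k+1}-y_i^k)\to0$, $\nabla_i f(y^k)\to\nabla_i f(x^*)$ by continuity of $\nabla f$, and $w_i^k\to p|x_i^*|^{p-1}$, which is finite since $|x_i^*|>0$ by Theorem \ref{thm.also.stable}(iv). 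Passing to the limit yields
\[
\nabla_i f(x^*)+\lambda p|x_i^*|^{p-1}\sign(x_i^*)=0,\qquad i\in\Ical^*,
\]
while $x_i^*=0$ for $i\in\Acal^*$; together these are precisely the first-order stationarity conditions for $F(\cdot,0)$ on its support, so $x^*$ is a stationary point.

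The main obstacle, as indicated above, is the non-Lipschitz behaviour of the weights near the zero coordinates: the argument goes through only because Theorem \ref{thm.also.stable} guarantees that the active coordinates are \emph{exactly} zero for $k>K$ (so no limit is needed there) while the support coordinates stay bounded away from $0$ (so the weights remain bounded and $|x_i^*|^{p-1}$ is finite). I would take care to confirm that the subsequence may be shifted by one index without loss---immediate from $\|x^{k+1}-x^k\|\to0$ and $\|y^{k-1}-x^k\|\to0$---and that the frozen sign $s_i$ coincides with $\sign(x_i^*)$ on $\Ical^*$, which follows from the uniform lower bound on $|x_i^k|$ in Theorem \ref{thm.also.stable}.
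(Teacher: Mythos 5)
Your proposal is correct and follows essentially the same route as the paper: part (i) via monotonicity and convergence of $\psi(x^k,x^{k-1},\epsilon^k)$ together with $\|x^k-x^{k-1}\|\to 0$ and continuity of $F$ in $(x,\epsilon)$, and part (ii) by passing to the limit in the subproblem optimality condition restricted to the stabilized support $\Ical^*$ from Theorem \ref{thm.also.stable}, using Lemma \ref{lem.acc.gc}(iii)--(iv). If anything, you are more explicit than the paper about the index shift and the identification of the frozen sign with $\sign(x_i^*)$, which the paper leaves implicit.
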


\begin{proof} (i)  For any $x^*\in \chi$ with subsequence $\{x^k\}_{\Scal}\to x^*$,    from Lemma \ref{lem.acc.gc} we know that 
	\begin{align*}
	& \lim_{k\to \infty \atop k\in\Scal} F(x^k,\epsilon^{k}) \\
	 =  &\lim_{k\to \infty \atop k\in\Scal} F(x^k,\epsilon^{k})+ \frac{\beta}{2}\|x^{k}-x^{k-1}\|^2 \\
	= &   \lim_{k\to \infty \atop k\in\Scal} \psi(x^k,  x^{k-1},\epsilon^k),
	\end{align*}
	by the monotonicity of $\psi(x^k,  x^{k-1},\epsilon^k)$ from Lemma \ref{lem.acc.gc} (i), we know there is a unique limit value $\zeta$, i.e. 
	$ \zeta = \lim\limits_{k\to \infty \atop k\in\Scal} F(x^k,\epsilon^{k}) $
	and  $F(x^*,0)= \zeta$ for any $x^*\in\chi$. 
	
	(ii) 
	Let $x^*$ be a limit point of $\{x^k\}$ with subsequence $\{x^k\}_{\Scal} \to x^*$. 
	We have for any $   i\in \Ical(x^*)$, 
	\[\begin{aligned}
	&\  \nabla_if(x^*) + \lambda p |x_i^*|^{p-1}\sign(x_i^*)\\
	= &     \lim_{k\to\infty \atop k\in\Scal}	 \nabla_if(x^k) + \lambda p |x_i^k|^{p-1}\sign(x^k_i) \\
	= &     \lim_{k\to\infty \atop k\in\Scal}	 \nabla_if(y^k)  + \lambda p(|x_i^k|+ \epsilon_i^k)^{p-1} \text{sign}(x_i^{k+1}) \\
	% = &  \lim_{k\to\infty \atop k\in\Scal} \nabla_iQ_k(x^{k+1})+ \lambda p(|x_i^k|+\epsilon_i^k)^{p-1} \text{sign}(x_i^{k+1})  -\beta(x_i^k-y_i^k) \\
	= & \lim_{k\to\infty \atop k\in\Scal} -\beta(x_i^{k+1}-y_i^k) \\
	= &\ 0,
	\end{aligned} \]
	the first and second equality is by Lemma  \ref{lem.acc.gc}(iii)-(iv), the 
	third equality is due to 
	%For Part (i), summing up both sides of  \ref{eq:convergence1}, we have
	%	\[ F(x^0, \epsilon^0) - F(x^k; \epsilon^k) \ge (M-\tfrac{L_f}{2}) \sum_{i=0}^{k-1} \|x^{i+1}-x^i\|^2.\]
	%	Taking $k\to \infty$, we know 
	%	\[ F(x^0, \epsilon^0) - \underline F  \ge  F(x^0, \epsilon^0) - F(x^*; \epsilon^*)  \ge (M-\tfrac{L_f}{2}) \sum_{i=0}^{\infty} \|x^{k+1}-x^k\|^2,\]
	% implying  $\lim_{k\to \infty} \|x^{k+1}-x^{k}\| = 0$. 
	%
	%As for Part (ii), 
	%by  
	$x^{k+1}$  satisfying the optimal condition of the subproblem  for $k> K$
	\begin{equation}\label{eq.831.1}
	\nabla_if(y^{k})+ \beta (x^{k+1} - y^k) + \lambda p(|x_i^k|+ \epsilon_i^k)^{p-1}\text{sign}(x_i^{k+1})  = 0,  \quad    i\in \Ical(x^*) \\
	\end{equation}
	% $\{x^{k+1}\}_{\Scal} \to x^*$ from Theorem~\ref{thm.also.stable} so 
	% that $\{x^k - y^k\}_{\Scal} \to 0$. 
	and last equality is  by Lemma \ref{lem.acc.gc}(iv). 
	%, the fourth equality is by 
	%\eqref{eq.831.1} and the last equality is by \eqref{eq.831.3}. 
	Therefore,  $x^*$ is first-order optimal, completing the proof. 
\end{proof}

 To further analyze the property of $\{(x^k,\epsilon^k)\}$,  denote $\delta_i = \sqrt{\epsilon_i}$ since $\epsilon_i$ is restricted to be nonnegative and write $F $ and $\psi$ as   functions  of $(x, \delta)$  for simplicity, i.e.,     
\[\begin{aligned}
F(x,\delta) & =  f(x)+\lambda \sum_{i=1}^n(|x_i|+\delta_i^2)^p, \\
\psi(x,y,\delta) & = f(x)+\lambda \sum_{i=1}^n(|x_i|+\delta_i^2)^p+\frac{\beta}{2}\|x-y\|_2^2.
\end{aligned}
\]

 Next, we show the uniqueness of the limit points of $\{x^k\}$ under KL property. 
Notice that after   the $K$th iteration, the iterates $\{x^k_{\Ical^*}\}$ remains in the interior of the 
same orthant of $\mathbb{R}^{|\Ical^*|}$ and are bounded away from
 the axes by   Theorem \ref{thm.also.stable}.
  Hence we can  assume 
 the reduced function $F(x_{\Ical^*}, \delta_{\Ical^*})$ has the KL property at $(x_{\Ical^*}^*, 0_{\Ical^*}) \in \mathbb{R}^{2|\Ical^*|} $, which is a weaker 
 condition than assuming the KL property of $F(x,0)$ at $(x^*, 0) \in \mathbb{R}^{2n}$.

  \begin{assumption}\label{ass.KL}   % Suppose $F(x_{\Ical^*}, \delta_{\Ical^*})$ has the KL property at every $(x_{\Ical^*}^*, 0_{\Ical^*}) \in \mathbb{R}^{2|\Ical^*|} $ with $x^*\in \chi$. 
%  \end{assumption} 
 Suppose $\psi$ has the KL property at every 
%Under Assumption~\ref{ass.KL},  $\psi(x_{\Ical^*},y_{\Ical^*},\delta_{\Ical^*})$ has the KL property at 
%every  
\[ (x_{\Ical^*}^*, x_{\Ical^*}^*, 0_{\Ical^*}) \in \mathbb{R}^{3|\Ical^*|}, \quad \forall x^*\in\chi .\] 
\end{assumption}
%
%In particular, it is easy to see that Assumption~\ref{ass.KL} is satisfied if $f(x_{\Ical^*})$ has the KL property  at $x_{\Ical^*}^*$ 
%and $p\in \mathbb{Q}$. 

% 
% 
% In fact, based on the discussion in \S\ref{sec.KL},  we only need to make assumption on $f$.  
% For any  $p\in\mathbb{Q}$, 
% $\sum_{i\in\Ical^*} (|x_i|+\epsilon_i)^p$ is semialgebraic  around $(x^*_{\Ical^*}, 0_{\Ical^*})$ by \citep{wakabayashi2008remarks}.   
%  Therefore, we only need to assume 
% $f(x_{\Ical^*})$ is semialgebraic in a neighborhood around $x^*$.   
% We state this assumption formally below. 
% 
% \begin{assumption}\label{ass.KL}  Suppose $p\in\mathbb{Q}$ and $f(x_{\Ical^*})$ is semialgebraic in $ \mathbb{R}^{|\Ical^*| }_s$, where $x^*$  is a limit point of $\{x^k\}$ generated by the EIRL1 methods. 
% \end{assumption} 
% 
% 
%
%
%
%
%???????  $f$ is KL, so $F$ is KL, but what about $\psi$? 
%
%
%

%$E$ is differentiable with respect to $x_i$ at 
%$x_i$.  The analysis investigates the gradient of $E_{\epsilon^k}$ at $(x^k, x^{k-1})$ with respect to 
%$(x_i, y_j), i\in \Ical(x^k), j \in \Ical(x^{k-1})$. Define the norm of its gradient with respect to those 
%nonzero components as 
%\[ g^k:=\max\{ \big| \frac{d}{d x_i} E_{\epsilon^k}(x^k, x^{k-1}) \big|,  i\in \Ical(x^k),\  \big| \frac{d}{d y_j} E_{\epsilon^k}(x^k, x^{k-1}) \big|,  j \in \Ical(x^{k-1})\}.\] 

By Theorem \ref{thm.also.stable},  for all sufficiently large $k$,  the components of $\{x^k_{\Ical^*}\} $ are all uniformly bounded away from 0 and 
$x^k_{\Acal^*}\equiv 0$. 
%  sufficiently large $k$  and consider the reduced problem 
% in $\mathbb{R}^{|\Ical^*|}$ 
% as a smooth  function  since the nonzero components remain in the same orthant and  are uniformly bounded away from 0  for sufficiently large % $k$.  
% and $\psi$ is smooth at every iterate $(x^k, x^{k-1}, \delta^k)$.   
% For simplicity and without loss of generality, in the rest of this section,  we can assume 
% \[K=1, \quad \Ical^*=\{1, \ldots, n\}\quad  \text{and} \quad \Acal^*=\emptyset,\]  
We have the following properties about $\psi$. % under KL property. 
%In fact, let $\epsilon^k=0$, we have 
%\[
%E_{\epsilon^k}(x,y) = \psi(x,y).
%\]
\begin{lemma}\label{lem.acc.gc2}
	Let $\{x^k\}$ be a sequence generated by Algorithm  \ref{alg.acc} and Assumption \ref{ass.KL} is satisfied. 
	% and $\psi$ is a KL function at   $(x^*,x^*,0)$ with $x^*\in\chi$, where $\delta^{k+1}=\sqrt{\mu}\delta^k$. 
	For sufficiently large $k$, the following statements hold.
	\begin{enumerate}
		\item[(i)]  There exists $D_1>0$ such that for all $k$
		\begin{align*}\label{eq:acc.gc4}
		& \|\nabla \psi(x^k_{\Ical^*}, x^{k-1}_{\Ical^*},\delta^k_{\Ical^*})\|_2\\
		 \leq & D_1(\|x^k_{\Ical^*}-x^{k-1}_{\Ical^*}\|_2 +\|x^{k-1}_{\Ical^*}-x^{k-2}_{\Ical^*}\|_2+\|\delta^{k-1}_{\Ical^*}\|_1-\|\delta^{k}_{\Ical^*}\|_1),
		\end{align*}
		so that $\lim\limits_{k\to \infty} \|\nabla \psi(x^k_{\Ical^*}, x^{k-1}_{\Ical^*},\delta^k_{\Ical^*})\|=0$. 
		\item[(ii)]   $\{\psi (x^k_{\Ical^*},x^{k-1}_{\Ical^*}, \delta^k_{\Ical^*})\}$ is monotonically decreasing. There exists   $D_2>0$ such that 
		\begin{equation*}\label{eq:acc.gc5}
		\psi(x^k_{\Ical^*},x^{k-1}_{\Ical^*},\delta^k_{\Ical^*})-\psi(x^{k+1}_{\Ical^*},x^k_{\Ical^*}, \delta^{k+1}_{\Ical^*}) \geq D_2\|x^k_{\Ical^*}-x^{k-1}_{\Ical^*}\|^2_2.
		\end{equation*}
		\item[(iii)]  $\psi(x^*_{\Ical^*}, x^*_{\Ical^*},0_{\Ical^*})= \zeta: = \lim\limits_{k\to\infty}\psi(x^k_{\Ical^*}, x^{k-1}_{\Ical^*},\delta^{k}_{\Ical^*})$, where $\Gamma$ is the set of 
		the cluster points of $\{(x^k_{\Ical^*}, x^{k-1}_{\Ical^*}, \delta^{k}_{\Ical^*})\}$, i.e., $\Gamma:=\{(x^*_{\Ical^*},x^*_{\Ical^*},0_{\Ical^*})\mid x^*\in\chi\}$.
	\end{enumerate}
\end{lemma}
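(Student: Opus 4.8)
The plan is to push everything onto the coordinates in $\Ical^*$, where by Theorem~\ref{thm.also.stable} the tail iterates have a frozen sign pattern $s_{\Ical^*}\in\{-1,+1\}^{|\Ical^*|}$ and satisfy $|x_i^k|\ge (C/p\lambda)^{1/(p-1)}-\epsilon_i^k$, uniformly bounded away from $0$, while $x^k_{\Acal^*}\equiv 0$. Consequently each summand $(|x_i|+\delta_i^2)^p$ with $i\in\Ical^*$ is $C^1$ near the tail iterates, so $\psi$ is genuinely differentiable on the reduced space and its three gradient blocks can be written out: the $x$-block $(\nabla f(x^k))_{\Ical^*}+\lambda\,w^k_{\Ical^*}\circ s_{\Ical^*}+\beta(x^k-x^{k-1})_{\Ical^*}$, the $y$-block $-\beta(x^k-x^{k-1})_{\Ical^*}$, and the $\delta$-block with entries $2\lambda w_i^k\delta_i^k$.

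For part (i) I would substitute into the $x$-block the optimality condition \eqref{eq:accgc1} of the subproblem that produced $x^k$, which on $\Ical^*$ reads $\lambda w_i^{k-1}s_i=-\nabla_i f(y^{k-1})-\beta(x_i^k-y_i^{k-1})$. Writing $\lambda w_i^k s_i=\lambda w_i^{k-1}s_i+\lambda(w_i^k-w_i^{k-1})s_i$ and using $y^{k-1}-x^{k-1}=\alpha^{k-1}(x^{k-1}-x^{k-2})$ collapses the $x$-block to $(\nabla_i f(x^k)-\nabla_i f(y^{k-1}))+\beta\alpha^{k-1}(x_i^{k-1}-x_i^{k-2})+\lambda(w_i^k-w_i^{k-1})s_i$. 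The first term is controlled by $L_f\|x^k-y^{k-1}\|_2\le L_f(\|x^k-x^{k-1}\|_2+\bar\alpha\|x^{k-1}-x^{k-2}\|_2)$ via Lipschitz differentiability of $f$, the second by $\beta\bar\alpha\|x^{k-1}-x^{k-2}\|_2$, and the $y$-block by $\beta\|x^k-x^{k-1}\|_2$; together these account for the two displacement terms of the bound. The remaining weight-difference and $\delta$-block contributions are where the real work lies, and I expect this to be the main obstacle.

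For those two terms I would use that $t\mapsto p t^{p-1}$ is Lipschitz on $[a,\infty)$ for any $a>0$ and that $|x_i^k|+\epsilon_i^k$ stays above such an $a$ on $\Ical^*$; this gives $|w_i^k-w_i^{k-1}|\le L_w(|x_i^k-x_i^{k-1}|+(\epsilon_i^{k-1}-\epsilon_i^k))$, and since $\epsilon_i^{k-1}-\epsilon_i^k=(\delta_i^{k-1}-\delta_i^k)(\delta_i^{k-1}+\delta_i^k)$ with $\delta$ bounded, it contributes a multiple of $\|\delta^{k-1}_{\Ical^*}\|_1-\|\delta^k_{\Ical^*}\|_1$. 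The $\delta$-block entry $2\lambda w_i^k\delta_i^k\le 2C\delta_i^k$ (using $\lambda w_i^k\le C$ on $\Ical^*$) is then bounded through the crucial observation that the geometric rule $\epsilon^{k+1}\le\mu\epsilon^k$ forces $\delta_i^k\le\frac{\sqrt\mu}{1-\sqrt\mu}(\delta_i^{k-1}-\delta_i^k)$, so this term is also absorbed into $\|\delta^{k-1}_{\Ical^*}\|_1-\|\delta^k_{\Ical^*}\|_1$. Collecting all bounds produces $D_1$, and since each term tends to $0$ by Lemma~\ref{lem.acc.gc}(iii)--(iv) and $\delta^k\to0$, the gradient vanishes in the limit.

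For part (ii) I would replay the descent argument of Lemma~\ref{lem.acc.gc}(i) verbatim on the reduced space: for large $k$ the $\Acal^*$ coordinates are pinned at $0$, hence contribute nothing to the proximal term and make the concavity inequality \eqref{eq:accgc3} on them a trivial equality, so the Lipschitz, $\beta>L_f$, concavity and optimality steps all localize to $\Ical^*$ and yield the stated sufficient decrease with $D_2=\frac{\beta}{2}(1-\bar\alpha^2)$. For part (iii), monotonicity from (ii) together with boundedness below of $F$ gives convergence of $\{\psi(x^k_{\Ical^*},x^{k-1}_{\Ical^*},\delta^k_{\Ical^*})\}$ to some $\zeta$; then along any subsequence $\{x^k\}_{\Scal}\to x^*\in\chi$ one has $x^{k-1}_{\Ical^*}\to x^*_{\Ical^*}$ (from $\|x^k-x^{k-1}\|\to0$) and $\delta^k\to0$, and since the reduced $\psi$ is continuous at $(x^*_{\Ical^*},x^*_{\Ical^*},0_{\Ical^*})$ because $x^*_{\Ical^*}$ is bounded away from $0$, the subsequential limit equals $\psi(x^*_{\Ical^*},x^*_{\Ical^*},0_{\Ical^*})$, forcing it to coincide with $\zeta$.
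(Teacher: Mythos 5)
Your proposal is correct and follows essentially the same route as the paper: the same three-block gradient decomposition, substitution of the $(k-1)$th subproblem's optimality condition into the $x$-block, a mean-value/Lipschitz bound on $t\mapsto pt^{p-1}$ using the uniform lower bound on $|x_i^k|+\epsilon_i^k$ over $\Ical^*$, and absorption of the $\delta$-block via $\delta^k\le\sqrt{\mu}\,\delta^{k-1}$, with (ii) and (iii) localized from Lemma \ref{lem.acc.gc}(i) and Theorem \ref{thm.global.acc}(i). No gaps worth noting.
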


%%%%%%%%%8.22%%%%
\begin{proof}
	% and that \eqref{ii.thm} holds true. 
	%  all $i$ and $k > K$, which means  
	%	\begin{equation}\label{eq:acc.gc0}
	%	|x_i^k|+\epsilon_i^k \ge \left( \frac{p\lambda}{C}\right)^{\frac{1}{1-p}}.
	%	\end{equation}
	
  (i)  Notice for sufficiently large $k$,  the gradient of  $\psi$ at $(x^k_{\Ical^*},x^{k-1}_{\Ical^*}, \delta^k_{\Ical^*})$ is 
	\begin{equation}
	\label{grad.xydelta}
	\begin{aligned}
	\nabla_x \psi(x^k_{\Ical^*},x^{k-1}_{\Ical^*}, \delta_{\Ical^*}^k) & =\nabla f(x_{\Ical^*}^k)+\beta(x_{\Ical^*}^k -x_{\Ical^*}^{k-1})+\lambda w_{\Ical^*}^k\circ  \text{sign}(x_{\Ical^*}^{k}), \\
	\nabla_y \psi(x_{\Ical^*}^k,x_{\Ical^*}^{k-1}, \delta_{\Ical^*}^k) & =   -\beta(x_{\Ical^*}^k-x_{\Ical^*}^{k-1}),\\
	\nabla_\delta \psi(x_{\Ical^*}^k,x_{\Ical^*}^{k-1}, \delta_{\Ical^*}^k) & = 2\lambda w_{\Ical^*}^k \circ \delta_{\Ical^*}^k.
	\end{aligned}
	\end{equation}
	%	It follows that 
	%	\[ \nabla_y \psi(x^k,x^{k-1}, \delta^k) \to 0\  \text{ and }\  \nabla_\delta \psi(x^k,x^{k-1}, \delta^k) \to 0 \]
	%	since $\{w_i^k\}$ are uniformly bounded and $x^k-x^{k-1}\to 0$ by \ref{thm.also.stable}. 
	%	Hence, it suffices to show  
	%	\begin{equation}\label{eq.tmp.912}
	%	\nabla f(x^k)+\beta(x^k -x^{k-1})+\lambda w^k\circ \text{sign}(x^{k}) \to 0.
	%	\end{equation}
	We first derive the upper bound for $\|\nabla_x\psi(x_{\Ical^*}^k,x_{\Ical^*}^{k-1},\delta_{\Ical^*}^k)\|_2$. The first-order optimality condition of the $(k-1)$th subproblem at $x_{\Ical^*}^k$ is 
	%	\[
	%	0= \nabla f(y_{\Ical^*}^{k-1})+\beta(x^{k}-y^{k-1})+\lambda W^{k-1}\circ \text{sign}(x^{k})
	%	\]
	%	which means that there exists $\xi^k \in \partial |x^k|$ such that 
	\begin{equation*}
	\label{eq:acc.gc2}
	\nabla f(y_{\Ical^*}^{k-1})+\beta(x_{\Ical^*}^{k}-y_{\Ical^*}^{k-1})+\lambda w_{\Ical^*}^{k-1}\circ \text{sign}(x_{\Ical^*}^{k}) = 0. 
	\end{equation*}
	Hence, we have
	\begin{equation}
	\label{eq:acc.gc3}
	\begin{aligned}
	 \nabla_x\psi(x_{\Ical^*}^k,x_{\Ical^*}^{k-1},\delta_{\Ical^*}^k)  
	= &\ \nabla f(x_{\Ical^*}^k)-\nabla f(y_{\Ical^*}^{k-1}) \\
	& + \beta(y_{\Ical^*}^{k-1}-x_{\Ical^*}^{k-1})+\lambda (w_{\Ical^*}^k-w_{\Ical^*}^{k-1})\circ \text{sign}(x_{\Ical^*}^{k}).
	\end{aligned}
	\end{equation}
	By Lemma \ref{lem.acc.gc}(iv) and the Lipschitz differentiability of $f$, we know 
	\begin{equation}\label{eq.nablaf.1}
	\begin{aligned}
	\| \nabla f(x_{\Ical^*}^k)-\nabla f(y_{\Ical^*}^{k-1})\|_2 & \le L_f \|x_{\Ical^*}^k - y_{\Ical^*}^{k-1}\|_2\\
	& = L_f  \|x_{\Ical^*}^k - x_{\Ical^*}^{k-1} - \alpha^k(x_{\Ical^*}^{k-1}-x_{\Ical^*}^{k-2})\|_2 \\
	& = L_f \|x_{\Ical^*}^k-x_{\Ical^*}^{k-1}\|_2 
	+  L_f \bar \alpha\|x_{\Ical^*}^{k-1}-x_{\Ical^*}^{k-2}\|_2,
	\end{aligned}\end{equation}
	and that 
	\begin{equation}\label{eq.nablaf.2}
	\|   \beta(y_{\Ical^*}^{k-1}-x_{\Ical^*}^{k-1}) \|_2 = \beta \|x_{\Ical^*}^{k-1} + \alpha^k(x_{\Ical^*}^{k-1}-x_{\Ical^*}^{k-2})- x_{\Ical^*}^{k-1}\|_2 \le \beta \bar \alpha \|x_{\Ical^*}^{k-1}-x_{\Ical^*}^{k-2}\|_2.
	\end{equation}
	On the other hand, by Lagrange's mean value theorem,  for each $i \in {\Ical^*}$, 
	\begin{equation*}
	\begin{aligned}
	\big| (w_i^k-w_i^{k-1})\cdot \text{sign}(x_i^k) \big|&= \big| w_i^k-w_i^{k-1}\big| \\
	& =   \big|p(|x_i^k|+(\delta_i^{k})^2)^{p-1}-p(|x_i^{k-1}|+(\delta_i^{k-1})^2)^{p-1}\big|\\
	& =   \big|p(1-p)(c_i^k)^{p-2} \Big[|x_i^k|-|x_i^{k-1}|+(\delta_i^{k})^2-(\delta_i^{k-1})^2\Big]\big|\\
	&\leq   p(1-p)(c_i^k)^{p-2} \Big[|x_i^k-x_i^{k-1}|+ (\delta_i^{k-1})^2-(\delta_i^{k})^2 \Big]\\
	&\leq   p(1-p)(c_i^k)^{p-2} \Big[|x_i^k-x_i^{k-1}|+ 2\delta_i^0(\delta_i^{k-1}-\delta_i^{k} )\Big],
	\end{aligned}
	\end{equation*}	
	where the first equality is by the fact that $x_i^k\neq 0$, and 
	$c_i^k$ is between $|x_i^k|+(\delta_i^{k})^2$ and $|x_i^{k-1}|+(\delta_i^{k-1})^2$. 
	It then follows that 
	\begin{equation}\label{eq.nablaf.3}
	\begin{aligned}
	& \  \| (w_{\Ical^*}^k-w_{\Ical^*}^{k-1})\circ \text{sign}(x_{\Ical^*}^k) \|_2\\
	\leq  &\ 	\| (w_{\Ical^*}^k-w_{\Ical^*}^{k-1})\circ \text{sign}(x_{\Ical^*}^k) \|_1 \\
	\leq   &\  \bar C  \left(\|x_{\Ical^*}^k-x_{\Ical^*}^{k-1}\|_1 + 2\|\delta_{\Ical^*}^0\|_\infty( \|\delta_{\Ical^*}^{k-1}\|_1 - \|\delta_{\Ical^*}^{k}\|_1)\right)\\
	\leq   &\   \bar C \left(\sqrt{n} \|x_{\Ical^*}^k-x_{\Ical^*}^{k-1}\|_2 + 2\|\delta_{\Ical^*}^0\|_\infty( \|\delta_{\Ical^*}^{k-1}\|_1 - \|\delta_{\Ical^*}^{k}\|_1)\right),	\end{aligned}
	\end{equation}
	where the second inequality 
	is by $(c_i^k)^{p-2}  \le  \left( \frac{p\lambda}{C}\right)^{\frac{p-2}{1-p}}$  from Theorem \ref{thm.also.stable}(ii) with  $\bar C : = p(1-p)\left( \tfrac{p\lambda}{C}\right)^{\frac{p-2}{1-p}}$. 
	Combining \eqref{eq:acc.gc3}, \eqref{eq.nablaf.1},  \eqref{eq.nablaf.2} and \eqref{eq.nablaf.3}, we know 
	\begin{equation}
	\label{first.bound}
	\begin{aligned}
	\| \nabla_x\psi(x^k_{\Ical^*},x^{k-1}_{\Ical^*},\delta^k_{\Ical^*})\|_2 
	%\le L_f \|x^k-x^{k-1}\|_2 
	%  +  L \bar \alpha\|x^{k-1}-x^{k-2}\|_2 + \beta \bar \alpha \|x^{k-1}-x^{k-2}\|_2 + p(1-p)\left( \tfrac{p\lambda}{C}\right)^{\frac{p-2}{1-p}} \left(\sqrt{n} \|x^k-x^{k-1}\|_2 + 2\|\delta^0\|_\infty( \|\delta^{k-1}\|_1 - \|\delta^{k}\|_1)\right)
	\le &\  (L_f +  \bar C\sqrt{n} )\|x_{\Ical^*}^k  -x^{k-1}_{\Ical^*}\|_2  \\
	& \  + \bar\alpha (L +\beta) \|x_{\Ical^*}^{k-1}-x_{\Ical^*}^{k-2}\|_2 \\
	&\ +  2 \bar C \|\delta^0\|_\infty (\|\delta_{\Ical^*}^{k-1}\|_1 - \|\delta^{k}_{\Ical^*}\|_1).
	\end{aligned}
	\end{equation}
	On the other hand, we have from  \eqref{grad.xydelta} that 
	\begin{equation}\label{second.bound}
	\| \nabla_y \psi(x^k_{\Ical^*}, x^{k-1}_{\Ical^*}, \delta^k_{\Ical^*}) \|_2 = \beta \| x^k_{\Ical^*} - x^{k-1}_{\Ical^*}\|_2,
	\end{equation} 
	and that 
	\begin{equation}\label{third.bound}
	\begin{aligned}
	\| \nabla_\delta \psi(x^k_{\Ical^*}, x^{k-1}_{\Ical^*}, \delta^k_{\Ical^*}) \|_2 & \le \| \nabla_\delta \psi(x_{\Ical^*}^k, x_{\Ical^*}^{k-1}, \delta_{\Ical^*}^k) \|_1\\
	& = \sum_{i\in \Ical^*} 2\lambda w_i^k \delta_i^k\\
	& \le \sum_{i\in\Ical^*} 2\lambda \tfrac{C}{\lambda} \frac{\sqrt{\mu}}{1-\sqrt{\mu}} (\delta_i^{k-1}-\delta_i^k)\\
	& \leq  \frac{2 C\sqrt{\mu}}{1-\sqrt{\mu}}(\|\delta_{\Ical^*}^{k-1}\|_1-\|\delta^k_{\Ical^*}\|_1), 
	\end{aligned}	
	\end{equation}	
	where the second inequality is by Theorem \ref{thm.also.stable}(ii) and $\delta^k \leq \sqrt{\mu} \delta^{k-1}$. 
	Overall,  we obtain from \eqref{first.bound}, \eqref{second.bound} and \eqref{third.bound} 
	that Part (i) holds true by setting 
	\[ D_1 = \max \left( \beta + L_f +  \bar C \sqrt{n},  
	\bar\alpha (L +\beta), 
	2\bar C \|\delta_{\Ical^*}^0\|_\infty+  \tfrac{2 C\sqrt{\mu}}{1-\sqrt{\mu}} \right).
	\]
	
	%	
	%Due to $\delta^k \leq \sqrt{\mu} \delta^{k+1}$, we also have
	%\begin{equation*}
	%\frac{d}{d \delta_i} \psi(x^k,x^{k-1}, \delta^k)=
	%\end{equation*}

	%
	%Hence, it follows from \ref{lem.acc.gc} (iii) and that $\delta_i^k-\delta_i^{k-1} \to 0$ that 
	%\eqref{eq.part.i} is true. Meanwhile, we proved that there exists $C>0$ such that 
	%\begin{equation}
	%\|\nabla E_{\epsilon^k}(x^k, x^{k-1})\| \leq C(\|x^k-x^{k-1}\|+\|x^{k-1}-x^{k-2}\|+\|\delta_i^{k-1}\|_1-\|\delta_i^{k}\|_1)
	%\end{equation}
	%Since $x^k-x^{k-1}\to 0$, we have
	%\[
	%\lim\limits_{k\to \infty} \|\nabla \psi(x^k, x^{k-1},\delta^k)\|=0
	%\]
	%This completes the proof of part (i). 
	%
	
	%
	% by \ref{eq:acc.gc0} and $f$ is Lipschitz differentiable, there exists $C>0$ such that 
	%	\begin{equation}\label{eq:acc.gc4}
	%	\text{dist}((0,0), \nabla \psi(x^k, x^{k-1})) \leq C(\|x^k-x^{k-1}\|+\|x^{k-1}-x^{k-2}\|+\|\epsilon^{k-1}\|_1)
	%	\end{equation}
	%	Since $\|x^k-x^{k-1}\|\to 0$ and $\epsilon^{k-1}\to 0$, we conclude that
	%	\[
	%	\lim_{k\to \infty} \text{dist}((0,0),\partial \psi(x^k,x^{k-1}))=0,
	%	\]
	%	This proves (i).
	%	From \ref{lem.acc.gc} (i), we have proved that $\{F(x^k,\epsilon^{k})+ \frac{\beta}{2}\|x^{k}-x^{k-1}\|^2 \}$ is monotonically decreasing, hence the sequence $\{\psi(x^k,x^{k-1},\delta^k)\}$ is monotonically decreasing by the definition of $E$.
	%	By \ref{eq:accgc5}, under the condition that $\sup_k\alpha^k<1$, there exists a positive number $D$ such that 
	%	\begin{equation*}
	%	\psi(x^k,x^{k-1},\delta^k)-\psi(x^{k+1},x^k, \delta^{k+1}) \geq D\|x^k-x^{k-1}\|^2
	%	\end{equation*}
	%	
	
	Part (ii) and (iii) follow directly from Lemma \ref{lem.acc.gc}(i) with $D_2 = \beta(1-\bar\alpha^2)/2$
	and Theorem \ref{thm.global.acc}(i), respectively. 
\end{proof}

%	Due to $\|x^k-x^{k-1}\|\to 0$ and $\delta^k\to 0$, we have 
%	$$\lim_{k\to\infty} \psi(x^k,x^{k-1}, \delta^{k})=\lim_{k\to\infty} F(x^k, \epsilon^k)=\zeta$$
%	And meanwhile for any limit point $(\hat{x},\hat{x},0)$ so that $\hat{x}
%	\in \chi$, according to \ref{thm.global.acc}, we have $\psi(\hat{x},\hat{x},0) = \lim_{k\to\infty} \psi(x^k,x^{k-1},\delta^{k})=F(\hat{x},0)=\zeta$, which proves (ii) and (iii).

Now we are ready to prove the uniqueness of limit points.

\begin{theorem}\label{lem.acc.gc2.conv}
	Let $\{x^k\}$ be a sequence generated by Algorithm \ref{alg.acc} and Assumption \ref{ass.KL} is satisfied. 
	% $\psi$ has the KL property  at   $(x^*,x^*,0)$ with $x^*\in\chi$.
	%, where $\delta^{k+1}=\sqrt{\mu}\delta^k$. 
	Then    $\{x^k\}$ converges to a stationary point of $F(x,0)$; moreover, $$\sum_{k=0}^{\infty}\|x^k-x^{k-1}\|_2<\infty.$$
\end{theorem}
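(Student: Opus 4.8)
The plan is to apply the by-now-standard Kurdyka--\L ojasiewicz descent argument to the reduced sequence $z^k := (x^k_{\Ical^*}, x^{k-1}_{\Ical^*}, \delta^k_{\Ical^*})$, whose three ingredients --- sufficient decrease, a relative-error (subgradient) bound, and a KL inequality --- are supplied respectively by Lemma \ref{lem.acc.gc2}(ii), Lemma \ref{lem.acc.gc2}(i), and Assumption \ref{ass.KL}. Since $x^k_{\Acal^*}\equiv 0$ for all $k>K$ by Theorem \ref{thm.also.stable}, the full-space increments satisfy $\|x^k-x^{k-1}\|_2 = \|x^k_{\Ical^*}-x^{k-1}_{\Ical^*}\|_2$ for large $k$, so it suffices to prove $\sum_k \|x^k_{\Ical^*}-x^{k-1}_{\Ical^*}\|_2 < \infty$; summability then forces $\{x^k\}$ to be Cauchy, hence convergent to a single limit $x^*$, which is stationary for $F(\cdot,0)$ by Theorem \ref{thm.global.acc}(ii).

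First I would record that, by Theorem \ref{thm.also.stable}, for $k>K$ the components of $x^k_{\Ical^*}$ are bounded away from $0$ and keep a fixed sign, so $\psi$ is continuously differentiable on a neighborhood of every $z^k$ and of every point of $\Gamma=\{(x^*_{\Ical^*},x^*_{\Ical^*},0_{\Ical^*}):x^*\in\chi\}$; in particular $\dist(0,\bar\partial\psi(z^k)) = \|\nabla\psi(z^k)\|_2$. The set $\Gamma$ is compact (the image of the compact cluster set $\chi$), $\psi\equiv\zeta$ on $\Gamma$ by Lemma \ref{lem.acc.gc2}(iii), and $\psi$ satisfies the KL property at each point of $\Gamma$ by Assumption \ref{ass.KL}. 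I would then invoke the uniformized KL lemma to obtain a single desingularizing function $\phi$ together with $\eta,\varepsilon>0$ such that $\phi'(\psi(z)-\zeta)\,\|\nabla\psi(z)\|_2\ge 1$ holds uniformly for all $z$ with $\dist(z,\Gamma)<\varepsilon$ and $\zeta<\psi(z)<\zeta+\eta$. If $\psi(z^k)=\zeta$ for some $k$, the decrease estimate Lemma \ref{lem.acc.gc2}(ii) forces $x^{k}_{\Ical^*}=x^{k-1}_{\Ical^*}$ from that index on and the sum is finite trivially, so I assume $\psi(z^k)>\zeta$ for all $k$. Because $\psi(z^k)\downarrow\zeta$ and $\dist(z^k,\Gamma)\to 0$ (every subsequential limit of the bounded sequence $z^k$ lies in $\Gamma$, using $\|x^k-x^{k-1}\|\to0$ and $\delta^k\to0$), both conditions hold for all $k$ beyond some $K_0\ge K$.

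The core estimate then chains the three facts. Writing $a_k:=\|x^k_{\Ical^*}-x^{k-1}_{\Ical^*}\|_2$, $d_k:=\|\delta^{k-1}_{\Ical^*}\|_1-\|\delta^{k}_{\Ical^*}\|_1\ge0$, and $\Delta_k:=\phi(\psi(z^k)-\zeta)-\phi(\psi(z^{k+1})-\zeta)$, concavity of $\phi$ gives $\Delta_k\ge\phi'(\psi(z^k)-\zeta)(\psi(z^k)-\psi(z^{k+1}))$; the KL inequality replaces $\phi'$ by $1/\|\nabla\psi(z^k)\|_2$, Lemma \ref{lem.acc.gc2}(ii) bounds the decrease below by $D_2 a_k^2$, and Lemma \ref{lem.acc.gc2}(i) bounds $\|\nabla\psi(z^k)\|_2\le D_1(a_k+a_{k-1}+d_k)$, yielding
\begin{equation*}
a_k^2 \le \frac{D_1}{D_2}\,\Delta_k\,(a_k+a_{k-1}+d_k).
\end{equation*}
Taking square roots and applying $2\sqrt{uv}\le \gamma u+\gamma^{-1}v$ with $\gamma=3$ produces $2a_k\le \tfrac13(a_k+a_{k-1}+d_k)+ 3\tfrac{D_1}{D_2}\Delta_k$; summing from $K_0$ to $N$, the $\Delta_k$ telescope to at most $\phi(\psi(z^{K_0})-\zeta)$, the $d_k$ telescope to at most $\|\delta^{K_0-1}_{\Ical^*}\|_1$, and the index shift between $\sum a_{k-1}$ and $\sum a_k$ costs only the single boundary term $a_{K_0-1}$. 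Rearranging so the $a_k$-terms on the right are absorbed on the left leaves $\tfrac43\sum_{k=K_0}^N a_k$ bounded by a constant independent of $N$, hence $\sum_k a_k<\infty$, which is the claim.

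The main obstacle I anticipate is the bookkeeping in the step that converts the quadratic inequality into a summable one: one must choose the AM--GM split $\gamma$ and the index shift so that the $a_k$ and $a_{k-1}$ contributions are absorbed into the left-hand side with a strictly positive remaining coefficient, while keeping the telescoping $\delta$-terms --- which appear precisely because the smoothing is driven to $0$ rather than held bounded away, the genuinely new feature relative to \cite{yu2019iteratively} --- on the harmless, finite side. A secondary care point is justifying that the uniformized KL inequality applies to $z^k$ for all large $k$, which rests on $\dist(z^k,\Gamma)\to0$ together with the strict descent $\psi(z^k)\downarrow\zeta$, and on the differentiability of $\psi$ along the tail of the sequence guaranteed by Theorem \ref{thm.also.stable}.
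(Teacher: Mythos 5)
Your proposal is correct and follows essentially the same route as the paper's proof: the same reduction to the support $\Ical^*$, the same three ingredients (sufficient decrease, the gradient bound of Lemma \ref{lem.acc.gc2}(i), and the KL inequality), the same AM--GM-plus-telescoping absorption of the $a_k$, $a_{k-1}$ and $\delta$-terms, differing only in the choice of split constant. Your explicit appeal to the uniformized KL lemma over the compact set $\Gamma$ is in fact slightly more careful than the paper, which asserts a single desingularizing function $\phi$ on a neighborhood of $\Gamma$ without comment.
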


\begin{proof}
	By Theorem \ref{thm.global.acc}, it suffices to show that 
	$\{x^k\}$ has a unique cluster point.  % Without loss of generality, we still assume $K=1$. 

	% Recalling \ref{thm.global.acc}(ii), it suffices to show that $\{x^k\}$ converges into a stationary point of $F(x,0)$.  
	%Since all zero component is fixed,  we focus on the nonzero components. And $\{\psi(x^k,x^{k-1},\delta^{k})\}$ is monotonically decreasing. We only need to consider $\psi(x^k,x^{k-1},\delta^{k})\geq \zeta$ on $k>K$. 
	By Lemma \ref{lem.acc.gc2},   $\psi(x_{\Ical^*}^k,x_{\Ical^*}^{k-1},\delta_{\Ical^*}^{k})$ is monotonically decreasing and converging to $\zeta$.  
	If $\psi(x_{\Ical^*}^k,x_{\Ical^*}^{k-1},\delta_{\Ical^*}^{k})= \zeta$ after some $k_0$, then from  Lemma \ref{lem.acc.gc2}(ii), %and \ref{thm.global.acc}(ii), 
	we know $x_{\Ical^*}^{k+1} = x_{\Ical^*}^k$ for all $k > k_0$, meaning $x^k \equiv x^{k_0} \in \chi$, so that the proof is done.

	We next consider the case that $ \psi(x_{\Ical^*}^k,x_{\Ical^*}^{k-1},\delta_{\Ical^*}^{k})>\zeta$ for all $k$. Since $\psi$ has the KL property at every 
	$(x_{\Ical^*}^*,x_{\Ical^*},0_{\Ical^*})\in \mathbb{R}^{3|{\Ical^*}|}$, there exists a continuous concave function $\phi$ with $\eta >0$ and 
	neighborhood 
	\[ U=\{(x_{\Ical^*},y_{\Ical^*},\delta_{\Ical^*})\in \mathbb{R}^{3|{\Ical^*}|} \mid  \text{dist}((x_{\Ical^*},y_{\Ical^*},\delta_{\Ical^*}),\Gamma) <\tau \}\]      such that 
	\begin{equation}\label{eq:eiskl}
	\phi'(\psi(x_{\Ical^*},y_{\Ical^*},\delta_{\Ical^*})-\zeta)\text{dist}((0,0,0),\partial \psi(x_{\Ical^*},y_{\Ical^*},\delta_{\Ical^*}))\geq 1
	\end{equation} 
	for all $(x_{\Ical^*},y_{\Ical^*},\delta_{\Ical^*})\in U \cap\{(x_{\Ical^*},y_{\Ical^*},\delta_{\Ical^*})\in \mathbb{R}^{3|{\Ical^*}|}| \zeta< \psi(x_{\Ical^*},y_{\Ical^*},\delta_{\Ical^*}) < \zeta +\eta \}$.

	By the fact that $\Gamma$ is the set of limit points of $\{(x_{\Ical^*}^k,x_{\Ical^*}^{k-1},\delta_{\Ical^*}^{k})\}$ and Lemma \ref{lem.acc.gc}(ii), we have
	\[
	\lim_{k\to \infty} \text{dist}((x_{\Ical^*}^k,x_{\Ical^*}^{k-1},\delta_{\Ical^*}^{k}),\Gamma) =0.
	\]
	Hence, there exist $k_1\in\mathbb{N}$ such that $\text{dist}((x_{\Ical^*}^k,x_{\Ical^*}^{k-1},\delta_{\Ical^*}^{k}),\Gamma)<\tau$ for any $k>k_1$. On the other 
	hand, since  $\{\psi(x_{\Ical^*}^k,x_{\Ical^*}^{k-1},\delta_{\Ical^*}^{k})\}$ is monotonically decreasing  and converges to $\zeta$, there exists $k_2\in\mathbb{N}$ such that $\zeta < \psi(x_{\Ical^*}^{k},x_{\Ical^*}^{k-1},\delta_{\Ical^*}^{k}) <\zeta +\eta$ for all $k>k_2$. Letting $\bar{k}=\max \{k_1,k_2 \}$ and noticing that 
	$\psi$ is smooth at $(x_{\Ical^*}^k, x_{\Ical^*}^{k-1}, \delta_{\Ical^*}^k)$ for all $k>\bar k$,  we know from \eqref{eq:eiskl} that % , it holds that 	
	\[
	\phi'(\psi(x_{\Ical^*}^k,x_{\Ical^*}^{k-1},\delta_{\Ical^*}^{k})-\zeta) \|\nabla \psi(x_{\Ical^*}^k,x_{\Ical^*}^{k-1},\delta_{\Ical^*}^{k})\|_2 \geq 1, \qquad \text{for all } k\geq \bar{k}. 
	\]
	%	When $k>K$, we have
	%	$$\text{dist}((0,0,0),\partial \psi(x^k,x^{k-1},\delta^{k}))=\|\nabla \psi(x^k,x^{k-1},\delta^{k})\|$$ and 
	It follows that for any $k\geq \bar{k}$, % Hence, from $ and that 
	\[
	\begin{aligned}
	& \Big[\phi( \psi(x_{\Ical^*}^k,x_{\Ical^*}^{k-1},\delta_{\Ical^*}^{k})-\zeta)-\phi(  \psi(x_{\Ical^*}^{k+1},x_{\Ical^*}^{k},\delta_{\Ical^*}^{k+1})-\zeta)\Big]\\
	&\ \cdot D_1\Big[\|x_{\Ical^*}^k-x_{\Ical^*}^{k-1}\|_2 + \|x_{\Ical^*}^{k-1}-x_{\Ical^*}^{k-2}\|_2 + \|\delta_{\Ical^*}^{k-1}\|_1 - \|\delta_{\Ical^*}^k\|_1 \Big]\\
	\geq &\ \Big[\phi( \psi(x_{\Ical^*}^k,x_{\Ical^*}^{k-1},\delta_{\Ical^*}^{k})-\zeta)-\phi(  \psi(x_{\Ical^*}^{k+1},x_{\Ical^*}^{k},\delta_{\Ical^*}^{k+1})-\zeta)\Big]\|\nabla \psi(x_{\Ical^*}^k,x_{\Ical^*}^{k-1},\delta_{\Ical^*}^{k})\|_2\\
	\geq \ &   \phi'(\psi(x_{\Ical^*}^k,x_{\Ical^*}^{k-1},\delta_{\Ical^*}^{k})-\zeta)\cdot 
	\|\nabla \psi(x_{\Ical^*}^k,x_{\Ical^*}^{k-1},\delta_{\Ical^*}^{k})\| \\&\cdot \Big[\psi(x_{\Ical^*}^k,x_{\Ical^*}^{k-1},\delta_{\Ical^*}^{k})-\psi(x_{\Ical^*}^{k+1},x_{\Ical^*}^k,\delta_{\Ical^*}^{k+1})\Big] \\
	\geq\ & \psi(x_{\Ical^*}^k,x_{\Ical^*}^{k-1},\delta_{\Ical^*}^{k})-\psi(x_{\Ical^*}^{k+1},x_{\Ical^*}^k,\delta_{\Ical^*}^{k+1})\\
	\geq\ & D_2 \|x_{\Ical^*}^k-x_{\Ical^*}^{k-1}\|_2^2,
	\end{aligned}
	\]
	where the first inequality is by Lemma \ref{lem.acc.gc2}(i), the second inequality is by the concavity of $\phi$, 
	, the fourth inequality is by Lemma \ref{lem.acc.gc2}(ii). 
	%	Combining this with \ref{eq:acc.gc4} and \ref{eq:acc.gc5}
	%	\begin{equation}
	%	\begin{aligned}
	%	&C(\|x^k-x^{k-1}\|+\|x^{k-1}-x^{k-2}\|+\|\delta^{k-1}\|_1-\|\delta^{k}\|_1)\\
	%	&\cdot[\phi(\psi(x^k,x^{k-1},\delta^{k})-\zeta)-\phi( \psi(x^{k+1},x^{k},\delta^{k+1})-\zeta)]\\
	%	&\geq D\|x^k-x^{k-1}\|^2
	%	\end{aligned}
	%	\end{equation}
	Rearranging and taking the square root of both sides,  and using the inequality of arithmetic 
	and geometric means, we have
	\begin{equation*}
	\begin{aligned}
	\|x_{\Ical^*}^k-x_{\Ical^*}^{k-1}\|_2 \leq &\ \sqrt{\frac{2D_1}{D_2}[\phi(\psi(x_{\Ical^*}^k,x_{\Ical^*}^{k-1},\delta_{\Ical^*}^{k})-\zeta)-\phi( \psi(x_{\Ical^*}^{k+1},x_{\Ical^*}^{k},\delta_{\Ical^*}^{k+1})-\zeta)]}\\ &\times \sqrt{\frac{\|x_{\Ical^*}^k-x_{\Ical^*}^{k-1}\|+\|x_{\Ical^*}^{k-1}-x_{\Ical^*}^{k-2}\|+(\delta_{\Ical^*}^{k-1}-\delta_{\Ical^*}^{k})}{2}}\\
	\leq &\ \frac{D_1}{D_2} \Big[\phi(\psi(x_{\Ical^*}^k,x_{\Ical^*}^{k-1},\delta_{\Ical^*}^{k})-\zeta)-\phi( \psi(x_{\Ical^*}^{k+1},x_{\Ical^*}^{k},\delta_{\Ical^*}^{k+1})-\zeta)\Big]\\ &
	\ + \frac{1}{4}[\|x_{\Ical^*}^k-x_{\Ical^*}^{k-1}\|_2
	+ \|x_{\Ical^*}^{k-1}-x_{\Ical^*}^{k-2}\|_2+ (\|\delta_{\Ical^*}^{k-1}\|_1-\|\delta_{\Ical^*}^{k}\|_1)]. 
	\end{aligned}
	\end{equation*}
	Subtracting $\frac{1}{2}\|x_{\Ical^*}^k-x_{\Ical^*}^{k-1}\|_2$ from both sides, we have 
	\begin{equation*} 
	\begin{aligned}
	\frac{1}{2}\|x_{\Ical^*}^k-x_{\Ical^*}^{k-1}\|_2  \leq &\ \frac{D_1}{D_2}[\phi(\psi(x_{\Ical^*}^k,x_{\Ical^*}^{k-1},\delta_{\Ical^*}^{k})-\zeta)-\phi( \psi(x_{\Ical^*}^{k+1},x_{\Ical^*}^{k},\delta_{\Ical^*}^{k+1})-\zeta)] \\
	&+ \frac{1}{4}(\|x_{\Ical^*}^{k-1}-x_{\Ical^*}^{k-2}\|_2-\|x_{\Ical^*}^k-x_{\Ical^*}^{k-1}\|_2)+\frac{1}{4}(\|\delta_{\Ical^*}^{k-1}\|_1-\|\delta_{\Ical^*}^{k}\|_1).
	\end{aligned}
	\end{equation*}
	Summing up both sides from $\bar k$ to $t$, we have 
	\begin{equation*}\label{eq:acc.gc6}
	\begin{aligned}
	\tfrac{1}{2}\sum_{k=\bar k}^t \|x_{\Ical^*}^k-x_{\Ical^*}^{k-1}\|_2  \leq &\ \frac{D_1}{D_2}[\phi(\psi(x_{\Ical^*}^{\bar k},x_{\Ical^*}^{\bar k-1},\delta_{\Ical^*}^{\bar k})-\zeta)-\phi( \psi(x_{\Ical^*}^{t+1},x_{\Ical^*}^{t},\delta_{\Ical^*}^{t+1})-\zeta)] \\
	&+ \tfrac{1}{4}(\|x_{\Ical^*}^{\bar k-1}-x_{\Ical^*}^{\bar k-2}\|_2-\|x_{\Ical^*}^t-x_{\Ical^*}^{t-1}\|_2)+\tfrac{1}{4}(\|\delta_{\Ical^*}^{\bar k-1}\|_1-\|\delta_{\Ical^*}^{t}\|_1).
	\end{aligned}
	\end{equation*}
	Now letting $t\to\infty$, we know $\|\delta_{\Ical^*}^{t}\|_1\to 0$ and $\|x_{\Ical^*}^t-x_{\Ical^*}^{t-1}\|_2\to 0$ by Lemma \ref{lem.acc.gc}(iii), 
	and that $\phi( \psi(x_{\Ical^*}^{t+1},x_{\Ical^*}^{t},\delta_{\Ical^*}^{t+1})-\zeta)\to \phi(\zeta-\zeta) = \phi(0) = 0$.
	Therefore, we know 
	\begin{equation}\label{sum.x.bound}
	\begin{aligned}
	\sum_{k=\bar k}^\infty \|x_{\Ical^*}^k-x_{\Ical^*}^{k-1}\|_2 &\le \tfrac{2D_1}{D_2}\phi(\psi(x_{\Ical^*}^{\bar k},x_{\Ical^*}^{\bar k-1},\delta_{\Ical^*}^{\bar k})-\zeta) 
	+  \tfrac{1}{2}(\|x_{\Ical^*}^{\bar k-1}-x_{\Ical^*}^{\bar k-2}\|_2 + \|\delta_{\Ical^*}^{\bar k-1}\|_1)
	\end{aligned}
	\end{equation}
	Since $I^*$ is the   support of $\{x^k\}_{k\ge \bar k}$ by Theorem \ref{thm.also.stable},
	$$\sum_{k=\bar{k}}^{\infty}\|x^k-x^{k-1}\|_2 = \sum_{k=\bar k}^\infty \|x_{\Ical^*}^k-x_{\Ical^*}^{k-1}\|_2 <\infty. $$
	This implies that $\{x^k\}$ is a Cauchy sequence and consequently 
	  is convergent. 
\end{proof}

%	
%	Considering the last term about the different of $\epsilon^k$, summing this from $k=1$ to $\infty$, we have 
%	\[
%	\sum_{k=\bar{k}}^{\infty}(\|\delta^{k-1}\|_1-\|\delta^{k}\|_1)\leq \sum_{k=1}^{\infty}(\|\delta^{k-1}\|_1-\|\delta^{k}\|_1) = \|\delta^0\|_1 - \lim_{k\to \infty }\|\delta^k\|_1=\|\delta^0\|_1
%	\]
%	where $u<1$. Combining this with \ref{eq:acc.gc6} and summing \ref{eq:acc.gc6}  from $k=\bar{k}$ to $\infty$, we have
%	\[
%	\sum_{k=\bar{k}}^{\infty } \|x^k-x^{k-1}\| \leq \frac{2D_1}{D_2}(\phi(\psi(x^{\bar{k}},x^{\bar{k}-1},\delta^{\bar{k}})-\zeta) + \frac{1}{2} \|x^{\bar{k}-1}-x^{\bar{k}-2}\| + \frac{1}{2}\|\delta^0\|_1 <\infty,
%	\]
%	which proves (iv).

\section{Local convergence rate}
Now we investigate the local convergence rate of  Algorithm \ref{alg.acc}  by assuming 
that $\psi$ has the property 
at $(x^*_{\Ical^*}, x^*_{\Ical^*}, 0_{\Ical^*})$ 
 with $\phi$ in the KL definition taking the form $\phi(s)=cs^{1-\theta}$ for some $\theta \in [0,1)$ and $c> 0$.
By the discussion in \S\ref{sec.KL}, 
this additional requirement is satisfied by the semi-algebraic functions, which 
is also commonly satisfied by a wide range of functions. 
For example, for any  $p\in\mathbb{Q}$, 
 $\sum_{i\in\Ical^*} (|x_i|+\epsilon_i)^p$ is semi-algebraic  around $(x^*_{\Ical^*}, 0_{\Ical^*})$ by \citep{wakabayashi2008remarks}.   
  Therefore,  $\psi(x_{\Ical^*}, y_{\Ical^*}, \delta_{\Ical^*})$ is semi-algebraic around $(x^*_{\Ical^*}, x^*_{\Ical^*}, 0_{\Ical^*})$ 
  if  $f(x_{\Ical^*})$ is semi-algebraic in a neighborhood around $x^*_{\Ical^*}$.

We now show that Algorithm \ref{alg.acc} has local linear convergence. 
%\ref{alg.acc}
\begin{theorem}
	Suppose $\{x^k\}$ is   generated by Algorithm \ref{alg.acc}  and converges to $x^*$. Assume that $\psi(x_{\Ical^*}, y_{\Ical^*}, \delta_{\Ical^*})$ 
	has the KL property at $(x^*_{\Ical^*}, x^*_{\Ical^*}, 0_{\Ical^*})$  with $\phi$ in the KL definition taking the form $\phi(s)=cs^{1-\theta}$ for some $\theta \in [0,1)$ and $c> 0$.  Then the following statements hold.
	\begin{enumerate}
		\item[(i)] If $\theta =0$, then there exists $k_0\in\mathbb{N}$ so that $x^k\equiv x^*$   for any $k>k_0$;
		\item[(ii)] If $\theta \in (0,\frac{1}{2}]$, then there exist $\gamma \in (0,1), c_1>0, c_2>0$ such that 
		\begin{equation}\label{linear.1}
		\|x^k-x^*\|_2< c_1\gamma^{k-1}- c_2\|\delta^{k}\|_1
		\end{equation} for sufficiently large $k$;
		\item[(iii)] If $\theta \in (\frac{1}{2},1)$, then there exist $c_3>0, c_4>0$ such that 
		\begin{equation}\label{linear.2}
		\|x^k-x^*\|_2< c_3 k^{-\frac{1-\theta}{2\theta-1}}-c_4\|\delta^k\|_1 
		\end{equation}   for sufficiently large $k$.
	\end{enumerate}
\end{theorem}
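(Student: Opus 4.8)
The plan is to run the standard Kurdyka--\L ojasiewicz rate machinery (in the spirit of Attouch--Bolte) on the reduced variables indexed by $\Ical^*$, while carefully bookkeeping the smoothing terms. Write $\psi_k:=\psi(x^k_{\Ical^*},x^{k-1}_{\Ical^*},\delta^k_{\Ical^*})$, $E_k:=\psi_k-\zeta$, $\Delta_k:=\|x^k_{\Ical^*}-x^{k-1}_{\Ical^*}\|_2$, $d_k:=\|\delta^k_{\Ical^*}\|_1$, and the tail sum $S_k:=\sum_{j\ge k}\Delta_j$, which is finite by Theorem \ref{lem.acc.gc2.conv}. Since $x^k\to x^*$ and $\{x^k\}$ is eventually supported on $\Ical^*$ (Theorem \ref{thm.also.stable}), we have $\|x^k-x^*\|_2\le S_{k+1}$, so it suffices to control $S_{k+1}$ together with $d_k$. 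The ingredients I draw on are the descent $E_k-E_{k+1}\ge D_2\Delta_k^2$ and the subgradient bound $\|\nabla\psi_k\|_2\le D_1(\Delta_k+\Delta_{k-1}+(d_{k-1}-d_k))$ from Lemma \ref{lem.acc.gc2}, the value identity $E_k\ge 0$ from Lemma \ref{lem.acc.gc2}(iii), and the geometric decay $d_{k+1}\le\sqrt{\mu}\,d_k$ coming from $\epsilon^{k+1}\le\mu\epsilon^k$ and $\delta=\sqrt{\epsilon}$.

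The common first step converts KL into a bound on $\phi(E_k)$. With $\phi(s)=cs^{1-\theta}$, the inequality $\phi'(E_k)\,\mathrm{dist}(0,\partial\psi_k)\ge 1$ reads $c(1-\theta)E_k^{-\theta}\|\nabla\psi_k\|_2\ge 1$ on the smooth region, giving $E_k^{\theta}\le c(1-\theta)\|\nabla\psi_k\|_2$ and hence, on $\{E_k>0\}$,
\[
\phi(E_k)=cE_k^{1-\theta}\le C_0\big(\Delta_k+\Delta_{k-1}+d_{k-1}\big)^{q},\qquad q:=\tfrac{1-\theta}{\theta},
\]
after inserting the subgradient bound. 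Summing the per-step inequality from the proof of Theorem \ref{lem.acc.gc2.conv} from $k$ to $\infty$ and telescoping yields the companion estimate $\tfrac12 S_k\le \tfrac{D_1}{D_2}\phi(E_k)+\tfrac14(S_{k-1}-S_k)+\tfrac14 d_{k-1}$. Combining these and using $\Delta_k+\Delta_{k-1}=S_{k-1}-S_{k+1}$ gives the master recursion
\[
\tfrac12 S_k\le C_0'\big(S_{k-1}-S_{k+1}+d_{k-1}\big)^{q}+\tfrac14(S_{k-1}-S_k)+\tfrac14 d_{k-1},
\]
on which the three cases diverge.

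For $\theta=0$ the KL inequality forces $\mathrm{dist}(0,\partial\psi_k)\ge 1/c$ whenever $E_k>0$, contradicting $\|\nabla\psi_k\|_2\to 0$ (Lemma \ref{lem.acc.gc2}(i)); hence $E_{k_0}=0$ for some $k_0$, and the descent inequality then forces $\Delta_k=0$ for all $k\ge k_0$, i.e.\ $x^k\equiv x^*$, proving (i). For $\theta\in(0,\tfrac12]$ we have $q\ge 1$, so for large $k$ the small base satisfies $(\cdot)^{q}\le(\cdot)$; substituting into the master recursion and, crucially, \emph{retaining} the term $-C_0'S_{k+1}$ rather than discarding it, then using $S_{k+1}\le S_k$ to merge coefficients, produces a genuine two-step contraction $S_{k+1}\le\gamma_0\,S_{k-1}+\gamma_0\,d_{k-1}$ with $\gamma_0\in(0,1)$. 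Since $d_{k-1}$ already decays geometrically at rate $\sqrt{\mu}$, solving this linear recursion gives $S_k\le \tilde c\,\gamma^{k}$ for some $\gamma\in(0,1)$ combining $\sqrt{\gamma_0}$ and $\sqrt{\mu}$; tracking the combined quantity $\|x^k-x^*\|_2+c_2\|\delta^k\|_1\le S_{k+1}+c_2 d_k$ and rearranging yields \eqref{linear.1}. For $\theta\in(\tfrac12,1)$ we have $q\in(0,1)$; here the $(\cdot)^{q}$ term dominates, the geometrically small $d_{k-1}$ is lower order, and the linear term is absorbed into $(S_{k-1}-S_{k+1})^{q}$ via $q<1$, reducing the recursion to $S_k^{1/q}\le C''(S_{k-1}-S_{k+1})$ with $S_k$ decreasing. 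A standard Chung-type lemma for sequences with (one- or two-step) decrements then gives $S_k=O\big(k^{-q/(1-q)}\big)=O\big(k^{-(1-\theta)/(2\theta-1)}\big)$, which after the same rearrangement yields \eqref{linear.2}.

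The main obstacle I anticipate is precisely the propagation of the smoothing terms $d_k=\|\delta^k_{\Ical^*}\|_1$: unlike the textbook KL setting, both the subgradient bound and the descent inequality carry extra $\delta$-contributions, and one must verify that they neither destroy the contraction (linear case) nor dominate the polynomial decay (sublinear case). The resolution is the geometric bound $d_k\le(\sqrt{\mu})^{k}d_0$, which makes these terms compatible with---indeed faster than, in the sublinear regime---the rate dictated by the KL exponent; their clean absorption is exactly what surfaces as the correction terms $-c_2\|\delta^k\|_1$ and $-c_4\|\delta^k\|_1$ in the statement. A secondary technical point is ensuring $E_k>0$ along the sequence so that KL applies, which is handled as in Theorem \ref{lem.acc.gc2.conv} by separating off the trivial case where the energy reaches $\zeta$ in finitely many steps.
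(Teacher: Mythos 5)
Your proposal is correct and follows essentially the same route as the paper's proof: bounding $\|x^k-x^*\|_2$ by the tail sum $S_k$ on the reduced support $\Ical^*$, combining the KL inequality with the subgradient and descent bounds of Lemma \ref{lem.acc.gc2} into the same master recursion, and splitting into the three cases via the exponent $\tfrac{1-\theta}{\theta}$ versus $1$, with the Attouch--Bolte/Chung argument for the sublinear case. The only cosmetic difference is that in case (ii) you solve the inhomogeneous linear recursion directly using the geometric decay of $\|\delta^k\|_1$, whereas the paper homogenizes it by tracking the combined quantity $S_k+\tfrac{\sqrt{\mu}}{1-\mu}\|\delta^k_{\Ical^*}\|_1$; these are equivalent.
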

\begin{proof} % Without loss of generality, we again assume $K=1$, so that $\psi$ is smooth at every iterate $(x^k, x^{k-1}, \delta^k)$.  
	(i)  If $\theta=0$, then $\phi(s)=cs$ and $\phi'(s)\equiv c$.  We claim that there must exist $k_0>0$ such that $\psi(x^{k_0},x^{k_0-1},\delta^{k_0})=\zeta$. Suppose by contradiction this is 
	not true so that $\psi(x_{\Ical^*}^k,x_{\Ical^*}^{k-1},\delta_{\Ical^*}^{k})>\zeta$ for all $k$. Since $\lim\limits_{k\to \infty } x^k=x^*$ and the sequence $\{\psi(x_{\Ical^*}^k,x_{\Ical^*}^{k-1},\delta_{\Ical^*}^{k})\}$ is monotonically decreasing to $\zeta$ by Lemma \ref{lem.acc.gc2}.  
	The KL inequality implies that  all sufficiently large $k$, 
	\[
	c \|\nabla \psi(x_{\Ical^*}^k,x_{\Ical^*}^{k-1},\delta_{\Ical^*}^{k})\|_2 \geq 1,
	\]
	contradicting  $\|\nabla \psi(x_{\Ical^*}^k,x_{\Ical^*}^{k-1},\delta_{\Ical^*}^{k})\|_2 \to 0$ by Lemma \ref{lem.acc.gc2}(i).  
	Thus, there exists $k_0\in\mathbb{N}$ such that $\psi(x_{\Ical^*}^{k},x_{\Ical^*}^{k-1},\delta_{\Ical^*}^k)= \psi(x_{\Ical^*}^{k_0},x_{\Ical^*}^{k_0-1},\delta_{\Ical^*}^{k_0})=\zeta$ for all $k>k_0$. Hence, we conclude from  Lemma \ref{lem.acc.gc2}(ii) that $x_{\Ical^*}^{k+1} = x_{\Ical^*}^k$ for all $k > k_0$, meaning $x^k \equiv x^* = x^{k_0}$   for all $k \ge k_0$. This proves (i).

	(ii)-(iii) Now consider $\theta\in (0,1)$. First of all, if there exists $k_0\in\mathbb{N}$ such that $\psi(x_{\Ical^*}^{k_0},x_{\Ical^*}^{k_0-1}, \delta_{\Ical^*}^{k_0})=\zeta$, then using the same argument of the proof for (i),  we can see that $\{x^k\}$ converges finitely.   
	%it finitely convergent as above and the conclusion (ii) and (iii) hold. 
	Thus, we only need to consider the case that $\psi(x_{\Ical^*}^k,x_{\Ical^*}^{k-1}, \delta_{\Ical^*}^k)>\zeta$ for all $k$.
	
	Moreover, define $S_k=\sum_{l=k}^{\infty}\|x_{\Ical^*}^{l+1}-x_{\Ical^*}^l\|_2$. It holds that 
	\[ \begin{aligned}
	\|x_{\Ical^*}^k - x_{\Ical^*}^*\|_2 = \|x_{\Ical^*}^k - \lim_{t\to\infty} x_{\Ical^*}^t\|_2 =  &  \|\lim_{t\to\infty}\sum_{l=k}^t( x_{\Ical^*}^{l+1} - x_{\Ical^*}^l)\|_2\\
	 \le & S_k := \sum_{l=k}^{\infty}\|x_{\Ical^*}^{l+1}-x_{\Ical^*}^l\|_2.
	 \end{aligned}\]
	Therefore, we only have to prove $S_k$ also has the same upper bound as in \eqref{linear.1} and  \eqref{linear.2}. 
	
	To derive the upper bound for $S_k$,  we continue with   \eqref{sum.x.bound}.  
	For any $k > \bar k$ with $\bar k$ defined in the proof of Lemma \ref{lem.acc.gc2.conv}, 
	we can use the same argument of deriving \eqref{sum.x.bound} to have 
	\begin{equation}\label{eq:rate1}
	\begin{aligned}
	S_k 
	%	&= 2\sum_{l=k}^{\infty} \frac{1}{2}\|x^{l+1}-x^l\|\\
	%	&\leq 2\sum_{l=k}^{\infty} \frac{1}{2}\|x^{l}-x^{l-1}\|\\
	%	&\leq 2 \sum_{l=k}^{\infty} \bigg(\frac{C}{D}(\phi(\psi(x^l,x^{l-1}, \delta^l)-\zeta)-\phi( \psi(x^{l+1},x^{l}, \delta^{l+1})-\zeta)) \\
	%	&+ \frac{1}{4}(\|x^{l-1}-x^{l-2}\|-\|x^l-x^{l-1}\|)+\frac{1}{4}(\|\delta^{l-1}\|_1-\|\delta^{l}\|_1) \bigg)\\
	&\leq \frac{2D_1}{D_2}\phi(\psi(x_{\Ical^*}^k,x_{\Ical^*}^{k-1}, \delta_{\Ical^*}^k)-\zeta)+\frac{1}{2}\|x_{\Ical^*}^{k-1}-x_{\Ical^*}^{k-2}\|_2+\frac{1}{2} \|\delta_{\Ical^*}^{k-1}\|_1\\
	& = \frac{2D_1}{D_2}\phi(\psi(x_{\Ical^*}^k, x_{\Ical^*}^{k-1}, \delta_{\Ical^*}^k)-\zeta) + \frac{1}{2}(S_{k-2}-S_{k-1})+\frac{1}{2} \|\delta_{\Ical^*}^{k-1}\|_1.
	\end{aligned}
	\end{equation}
	%	By this and $\{S_k\}$ is monotonically decreasing, we obtain that 
	%	\begin{equation}
	%	\label{eq:rate1}
	%	\begin{aligned}
	%		S_k&\leq \frac{2D_1}{D_2}\phi(\psi(x^k,x^{k-1},\delta^{k})-\zeta)+\frac{1}{2} \|\delta^{k-1}\|_1 + \frac{1}{2}(S_{k-2}-S_{k-1})\\
	%		&\leq  \frac{2D_1}{D_2}\phi(\psi(x^k,x^{k-1},\delta^{k})-\zeta)+\frac{1}{2} \|\delta^{k-1}\|_1 + \frac{1}{2}(S_{k-2}-S_{k})
	%	\end{aligned}
	%	\end{equation}
	% for all $k\geq \bar{k}$. 
	By KL inequality with $\phi'(s)=c(1-\theta)s^{-\theta}$, for $k > \bar k$,
	\begin{equation}
	\label{eq:rate2}
	c(1-\theta)(\psi(x_{\Ical^*}^k, x_{\Ical^*}^{k-1}, \delta_{\Ical^*}^k)-\zeta)^{-\theta} \| \nabla \psi(x_{\Ical^*}^k,x_{\Ical^*}^{k-1},\delta_{\Ical^*}^{k}))\|_2 \geq 1.
	\end{equation}

	On the other hand, using Lemma \ref{lem.acc.gc2}(i) and the definition of $S_k$, we see that for all sufficiently large $k$,
	\begin{equation}
	\label{eq:rate3}
	\| \nabla \psi(x_{\Ical^*}^k,x_{\Ical^*}^{k-1},\delta_{\Ical^*}^{k}))\|_2 \leq D_1(S_{k-2}-S_k+\|\delta_{\Ical^*}^{k-1}\|_1-\|\delta_{\Ical^*}^{k}\|_1)
	% \text{dist}((0,0,0),\partial \psi(x^k,x^{k-1},\delta^{k})) \leq C(S_{k-2}-S_k+\|\delta^{k-1}\|_1-\|\delta^{k}\|_1)
	\end{equation}
	Combining \eqref{eq:rate2} with \eqref{eq:rate3}, we have
	\[
	(\psi(x_{\Ical^*}^k,x_{\Ical^*}^{k-1},\delta_{\Ical^*}^k)-\zeta)^{\theta}\leq D_1  c(1-\theta) (S_{k-2}-S_k+\|\delta_{\Ical^*}^{k-1}\|_1-\|\delta_{\Ical^*}^{k}\|_1)
	\]
\end{proof}
Taking a power of $(1-\theta)/\theta$ to both sides of the above inequality and scaling both sides by $c$, we obtain that for all $k> \bar k$
\begin{equation}
\begin{aligned}
\phi(\psi(x_{\Ical^*}^k,x_{\Ical^*}^{k-1}, \delta_{\Ical^*}^k)-\zeta)&=c[\psi(x_{\Ical^*}^k,x_{\Ical^*}^{k-1})-\zeta]^{1-\theta}\\
&\leq c  \Big[D_1    c(1-\theta) (S_{k-2}-S_k+\|\delta_{\Ical^*}^{k-1}\|_1-\|\delta_{\Ical^*}^{k}\|_1)\Big]^{\frac{1-\theta}{\theta}}\\
&\leq c  \Big[D_1    c(1-\theta) (S_{k-2}-S_k+\|\delta_{\Ical^*}^{k-1}\|_1)\Big]^{\frac{1-\theta}{\theta}},
\end{aligned}
\end{equation}
which combined with   \eqref{eq:rate1} yields 
\begin{equation}\label{eq:rate5}
\begin{aligned}
S_k &\leq C_1[S_{k-2}-S_k+\|\delta_{\Ical^*}^{k-1}\|_1]^{\frac{1-\theta}{\theta}} + \frac{1}{2}[S_{k-2}-S_k+\|\delta_{\Ical^*}^{k-1}\|_1]
\end{aligned}
\end{equation}
where $C_1=\frac{2D_1c}{D_2} \left(D_1\cdot c(1-\theta)\right)^{\frac{1-\theta}{\theta}}$.  
It follows that  
\begin{equation}
\begin{aligned}
&S_k + \frac{\sqrt{\mu}}{1-\mu}\|\delta_{\Ical^*}^k\|_1 \\
&\leq   C_1[S_{k-2}-S_k+\|\delta_{\Ical^*}^{k-1}\|_1]^{\frac{1-\theta}{\theta}} + \frac{1}{2}[S_{k-2}-S_k+\|\delta_{\Ical^*}^{k-1}\|_1]+ \frac{\sqrt{\mu}}{1-\mu}\|\delta_{\Ical^*}^k\|_1\\
& \leq  C_1[S_{k-2}-S_k+\|\delta_{\Ical^*}^{k-1}\|_1]^{\frac{1-\theta}{\theta}} + \frac{1}{2}[S_{k-2}-S_k+\|\delta_{\Ical^*}^{k-1}\|_1]+ \frac{\mu}{1-\mu}\|\delta_{\Ical^*}^{k-1}\|_1\\
&\leq C_1 [S_{k-2}-S_k+\|\delta_{\Ical^*}^{k-1}\|_1]^{\frac{1-\theta}{\theta}}+ C_2[S_{k-2}-S_k+\|\delta_{\Ical^*}^{k-1}\|_1],
\end{aligned}
\end{equation}
with  
$
C_2:=\frac{1}{2}+\frac{\mu}{1-\mu}.
$

For part (ii), $\theta \in (0, \frac{1}{2}]$.   Notice that 
\[\frac{1-\theta}{\theta}\geq 1\ \text{ and }\  S_{k-2}-S_k+\|\delta_{\Ical^*}^{k-1}\|_1 \to 0.\]
Hence, there exists $k_1\ge \bar k$ such that for any $k\ge k_1$
\[ [S_{k-2}-S_k+\|\delta_{\Ical^*}^{k-1}\|_1]^{\frac{1-\theta}{\theta}}  \le  [S_{k-2}-S_k+\|\delta_{\Ical^*}^{k-1}\|_1].\]
This,  combined with \eqref{eq:rate5}, yields 
\[ S_k + \frac{\sqrt{\mu}}{1-\mu}\|\delta^k_{\Ical^*}\|_1 \le (C_1+C_2) [S_{k-2}-S_k+\|\delta_{\Ical^*}^{k-1}\|_1]\]
for any $k\ge k_1$. By $\delta_i^{k}\leq \sqrt{\mu}\delta_i^{k-1}$, we know that for $i\in {\Ical^*}$,
\begin{align} \label{eq:delta_rel}
\delta_i^{k-1}\leq \frac{\sqrt{\mu}}{1-\mu}(\delta_i^{k-2}-\delta_i^k).
\end{align}
It follows that 
\[\begin{aligned}
S_k + \frac{\sqrt{\mu}}{1-\mu}\|\delta^k\|_1  &\le (C_1+C_2)\bigg[\big(S_{k-2}+\frac{\sqrt{\mu}}{1-\mu}\|\delta_{\Ical^*}^{k-2}\|_1\big)-\big(S_{k}+\frac{\sqrt{\mu}}{1-\mu}\|\delta_{\Ical^*}^{k}\|_1
\big)\bigg]
\end{aligned}
\]
So, 
\[\begin{aligned}
S_k + \frac{\sqrt{\mu}}{1-\mu}\|\delta_{\Ical^*}^k\|_1  \le &\frac{C_1+C_2}{C_1+C_2+1}\bigg[S_{k-2}+\frac{\sqrt{\mu}}{1-\mu}\|\delta_{\Ical^*}^{k-2}\|_1\bigg] \\
\le & \left( \frac{C_1+C_2}{C_1+C_2+1} \right)^{ \lfloor \frac{k}{2} \rfloor } \bigg[S^{k\bmod 2}+\frac{\sqrt{\mu}}{1-\mu}\|\delta_{\Ical^*}^{k\bmod 2}\|_1\bigg]\\
\le & \left( \frac{C_1+C_2}{C_1+C_2+1} \right)^{ \frac{k-1}{2} } \bigg[S^0+\frac{\sqrt{\mu}}{1-\mu}\|\delta_{\Ical^*}^0\|_1\bigg].
\end{aligned}
\]
Hence, we have 
\[
S_k+\frac{\sqrt{\mu}}{1-\mu}\|\delta_{\Ical^*}^k\|_1\leq \bigg(\sqrt{\frac{C_1+C_2}{C_1+C_2+1}}\bigg)^{k-1}(S_1+\frac{\sqrt{\mu}}{1-\mu}\|\delta_{\Ical^*}^1\|_1)
\]
Therefore, for any $k\ge k_1$,
\[ \|x^k-x^*\|_2 \le S_k \le c_1 \gamma^{k-1} -c_2 \|\delta_{\Ical^*}^{k}\|_1 \]
with 
\[ c_1= (S_1+\frac{\sqrt{\mu}}{1-\mu}\|\delta_{\Ical^*}^1\|),  \gamma=\sqrt{\frac{C_1+C_2}{C_1+C_2+1}}\ \text{ and }\ c_2= \frac{\sqrt{\mu}}{1-\mu},\] 
completing the proof of (ii).

For part (iii), $\theta \in (\frac{1}{2},1)$.   Notice that 
\[\frac{1-\theta}{\theta}< 1\ \text{ and }\  S_{k-2}-S_k+\|\delta_{\Ical^*}^{k-1}\|_1 \to 0.\]
Hence, there exists $k_2\ge \bar k$ such that for any $k\ge k_2$
\[[S_{k-2}-S_k+\|\delta_{\Ical^*}^{k-1}\|_1] \le [S_{k-2}-S_k+\|\delta_{\Ical^*}^{k-1}\|_1]^{\frac{1-\theta}{\theta}}.\]
This,  combined with \eqref{eq:rate5}, yields 
\[ S_k + \frac{\sqrt{\mu}}{1-\mu}\|\delta_{\Ical^*}^k\|_1 \le (C_1+C_2) [S_{k-2}-S_k+\|\delta_{\Ical^*}^{k-1}\|_1]^{\frac{1-\theta}{\theta}}\]
for any $k\ge k_1$.  Combining with \eqref{eq:delta_rel} yields 
\begin{equation}
S_k+ \frac{\sqrt{\mu}}{1-\mu}\|\delta_{\Ical^*}^k\|_1 \leq (C_1+C_2) [S_{k-2}+\frac{\sqrt{\mu}}{1-\mu}\|\delta_{\Ical^*}^{k-2}\|_1-(S_k+\frac{\sqrt{\mu}}{1-\mu}\|\delta_{\Ical^*}^{k}\|_1)]^{\frac{1-\theta}{\theta}}
\end{equation}
Raising to a power of $\frac{\theta}{1-\theta}$ to both side of the above equation, we see further that for any $k>k_2$
\begin{equation}
(S_k+\frac{\sqrt{\mu}}{1-\mu}\|\delta_{\Ical^*}^k\|_1)^{\frac{\theta}{1-\theta}} = C_3[S_{k-2}+\frac{\sqrt{\mu}}{1-\mu}\|\delta_{\Ical^*}^{k-2}\|_1-(S_k+\frac{\sqrt{\mu}}{1-\mu}\|\delta_{\Ical^*}^{k}\|_1)]
\end{equation}
with $C_3:=(C_1+C_2)^{\frac{\theta}{1-\theta}}$.

Consider the ``even'' subsequence of $\{k_2, k_2+1, \ldots\}$ and define $\{\Delta_t\}_{t\ge \lceil k_2/2\rceil}$ with 
$\Delta_t:= S_{2t}+\frac{\sqrt{\mu}}{1-\mu}\|\delta_{\Ical^*}^{2t}\|_1$. Then for all $t\ge \lceil k_2/2 \rceil$, we have
\[
\Delta_t^{\frac{\theta}{1-\theta}} \leq C_3(\Delta_{t-1}-\Delta_t)
\]
Proceeding as in the proof of \cite[Theorem 2]{Attouch2009} (starting from  \cite[Equation (13)]{Attouch2009}).
Define $h:(0,+\infty)\to \mathbb{R}$ by $h(s)=s^{-\frac{\theta}{1-\theta}}$ and let $R\in (1,+\infty)$. Take $k\geq N_1$ and assume first that $h(\Delta_k)\leq Rh(\Delta_{k-1})$. By rewriting above formula as
\[
1\leq \frac{C_3(\Delta_{k-1}-\Delta_{k})}{\Delta^{\frac{\theta}{1-\theta}}_{k}},
\]

we obtain that 
\[\begin{aligned}
1 &\leq C_3(\Delta_{k-1}-\Delta_{k})h(\Delta_k)\\
&\leq RC_3(\Delta_{k-1}-\Delta_{k})h(\Delta_{k-1})\\
&\leq RC_3\int_{\Delta_k}^{\Delta_{k-1}}h(s)ds\\
&\leq RC_3\frac{1-\theta}{1-2\theta}[\Delta_{k-1}^{\frac{1-2\theta}{1-\theta}}-\Delta_{k}^{\frac{1-2\theta}{1-\theta}}].
\end{aligned}
\]

Thus if we set $\mu=\frac{2\theta-1}{(1-\theta)RC_3}>0$ and $\nu=\frac{1-2\theta}{1-\theta}<0$ one obtains that 
\begin{equation}
\label{eq:delta2}
0<\mu \leq \Delta_{k}^{\nu}-\Delta_{k-1}^{\nu}.
\end{equation}

Assume now that $h(\Delta_k)>Rh(\Delta_k)$ and set $q=(\frac{1}{R})^{\frac{1-\theta}{\theta}} \in (0,1)$. It follows immediately that $\Delta_k\leq q\Delta_{k-1}$ and furthermore  (recalling that $\nu$ is negative) we have
\[\begin{aligned}
\Delta_k^{\nu}&\geq q^{\nu}\Delta_{k-1}^{\nu}\\
\Delta_k^{\nu}-\Delta_{k-1}^{\nu} &\geq (q^{\nu}-1)\Delta_{k-1}^{\nu}.
\end{aligned}
\]
Since $q^{\nu}-1>0$ and $\Delta_{p}\to 0^+$ as $p\to +\infty$, there exists $\bar{\mu}>0$ such that $(q^{\mu}-1)\delta_{p-1}^{\nu}>\bar{\mu}$ for all $p\geq N_1$. Therefore we obtain that 
\begin{equation}
\label{eq:delta3}
\Delta_{k}^{\nu}-\Delta_{k-1}^{\nu}\geq \bar{\mu}.
\end{equation}
If we set $\hat{\mu}=\min\{\mu, \bar{\mu}\}>0$, one can combine (\ref{eq:delta2}) and (\ref{eq:delta3}) to obtain that 
\[
\Delta_k^{\nu}-\Delta_{k-1}^{\nu}\geq \hat{\mu}>0
\]
for all $k\geq N_1$. By summing those inequalities from $N_1$ to some $t$ greater than $N_1$ we obtain that $\Delta_t^{\nu}-\Delta_{N_1}^{\nu}\geq \hat{\mu}(N-N_1)$ and consequently follows from
\begin{equation}
\label{power.delta}
\Delta_t\leq [\Delta_{N_1}^{\nu}+\hat{\mu}(t-N_1)]^{1/{\nu}}\leq C_4t^{-\frac{1-\theta}{2\theta-1}},
\end{equation}
for some $C_4>0$.

As for the ``odd'' subsequence of $\{k_2, k_2+1, \ldots\}$, we can define $\{\Delta_t\}_{t\ge \lceil k_2/2\rceil}$ with 
$\Delta_t:= S_{2t+1}+\frac{\sqrt{\mu}}{1-\mu}\|\delta_{\Ical^*}^{2t+1}\|_1$ and then can still show that \eqref{power.delta} holds true. 

Therefore,  for all sufficiently large and even number  $k$,  
\[ \|x_{\Ical^*}^k-x_{\Ical^*}^*\|_2 \leq S_k =\Delta_{\frac{k}{2}}-\frac{\sqrt{\mu}}{1-\mu}\|\delta_{\Ical^*}^{k}\|_1 \leq 2^{{\frac{1-\theta}{2\theta-1}} } C_4 k^{-\frac{1-\theta}{2\theta-1}}-\frac{\sqrt{\mu}}{1-\mu}\|\delta_{\Ical^*}^{k}\|_1.\]  
For all sufficiently large and odd number  $k$,  
\[ \|x_{\Ical^*}^k-x_{\Ical^*}^*\|_2 \leq S_k =\Delta_{\frac{k-1}{2}}-\frac{\sqrt{\mu}}{1-\mu}\|\delta_{\Ical^*}^{k}\|_1 \leq 2^{{\frac{1-\theta}{2\theta-1}} } C_4 (k-1)^{-\frac{1-\theta}{2\theta-1}}-\frac{\sqrt{\mu}}{1-\mu}\|\delta_{\Ical^*}^{k}\|_1.\]  
Overall, we have 
\[ \|x^k-x^*\|=\|x_{\Ical^*}^k-x_{\Ical^*}^*\|_2 \le c_3 k^{{\frac{1-\theta}{2\theta-1}} } - c_4 \|\delta_{\Ical^*}^{k}\|_1\]
for sufficiently large $k$, where 
\[c_3:=2^{{\frac{1-\theta}{2\theta-1}} } C_4\ \text{ and }\  c_4:= \frac{\sqrt{\mu}}{1-\mu}.\]
%where $\rho:=\frac{1-\theta}{2\theta-1}$. 
This completes the proof.

\section{Numerical results}
In this section, we perform sparse signal recovery experiments (similar to  \cite{yu2019iteratively,zeng2016sparse, figueiredo2007gradient,wen2018proximal}) to study the behaviors of Algorithm \ref{alg.acc}. The goal of the experiments is to reconstruct a length $n$ sparse signal $x$ from $m$ observations via its incomplete measurements with $m<n$. For this purpose, we first generate an $m\times n$ matrix $A$ with i.i.d. standard Gaussian entries and orthonormalizing the rows. We then set $y=Ax_{true} + \varepsilon$, where the origin signal $x_{true}$ contains $K$ randomly placed $\pm1$ spikes and  $\varepsilon\in \mathbb{R}^{m}$ has i.i.d standard Gaussian entries with variance $\sigma^2=10^{-4}$. Hence, the objective  has  $f(x)=\frac{1}{2}\|Ax-y\|^2_2$, 
where $A\in \mathbb{R}^{m\times n}, y\in \mathbb{R}^m$ and $x\in \mathbb{R}^n$. 

In our experiments, we compare the performances of the proposed algorithm EIRL1 with  IRL1, iteratively reweighted $\ell_2$ algorithm (IRL2)  and  the iterative jumping thresholding (IJT) algorithm  \citep{zeng2016sparse} for solving $\ell_p$-norm optimization problem and study the role of $\alpha$. All algorithms start from a random Gaussian vector $x^0$ with mean 0 and variance 1, and have same termination criterion that the number of iteration exceeds the limit or 
\[
\frac{\|x^k-x^{k-1}\|}{\|x^k\|}\leq \texttt{opttol},
\]
where \texttt{opttol} is a small parameter, and the default is $10^{-6}$.

\subsection{Comparison with other types of algorithms}
We compare the computational efficiency of algorithms for solving $\ell_p$ regularization problems with   $p=2/3, 1/2, 6/7$.  We fix the matrix size $(m,n)=(2048,4096)$, the degrees of sparseness $K=200$, $\lambda = 0.05, \mu=0.9, \beta = 1,   \epsilon^0 =1$ and   $\alpha =0.9$ for   
EIRL1.  For each $p$, we generate 50 random data sets $(A, x_{true}, y)$ and compute average MSE with respect to $x_{true}$, that is, $\text{MSE} = \tfrac{1}{n}\|x^k-x_{true}\|^2_2$. Figure \ref{fig.IRLIJT} depicts  the average MSE over the number of iterations, which shows  that EIRL1 converges faster than other methods. In addition, it is worth noticing that IRL algorithms have the ability to solve $\ell_p$-norm regularization problem with any $0<p<1$, while IJT  is limited to those $p$ such that the subproblems possess an explicit solution.
\begin{figure}[htbp] 
	\centering 
	\subfigure[$p=\frac{1}{2}$]{ 
		%\label{fig:subfig:a} %% label for first subfigure 
		\includegraphics[width=2.2in]{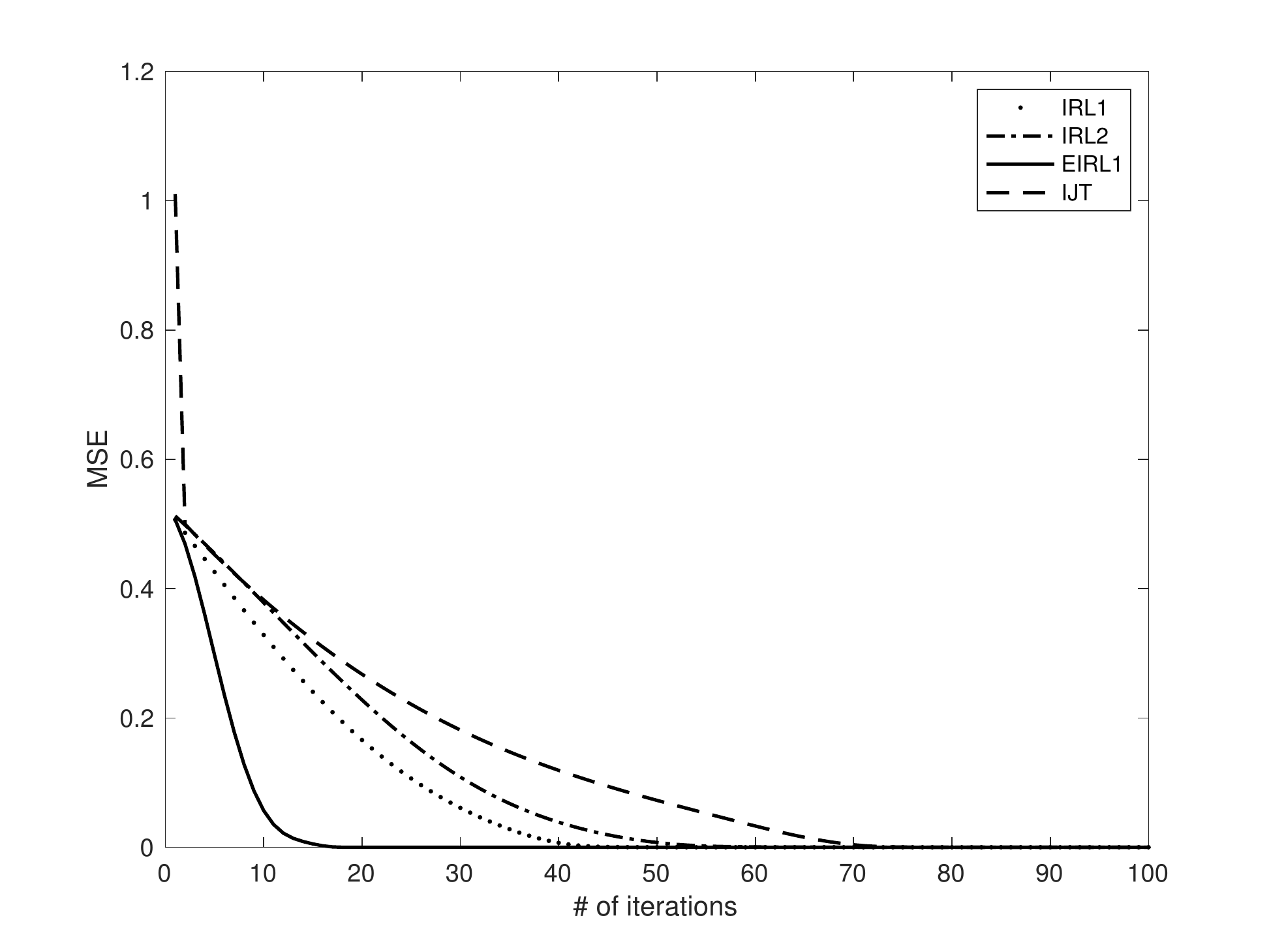} 
	} 
	\subfigure[$p=\frac{2}{3}$]{ 
		%\label{fig:subfig:b} %% label for second subfigure 
		\includegraphics[width=2.2in]{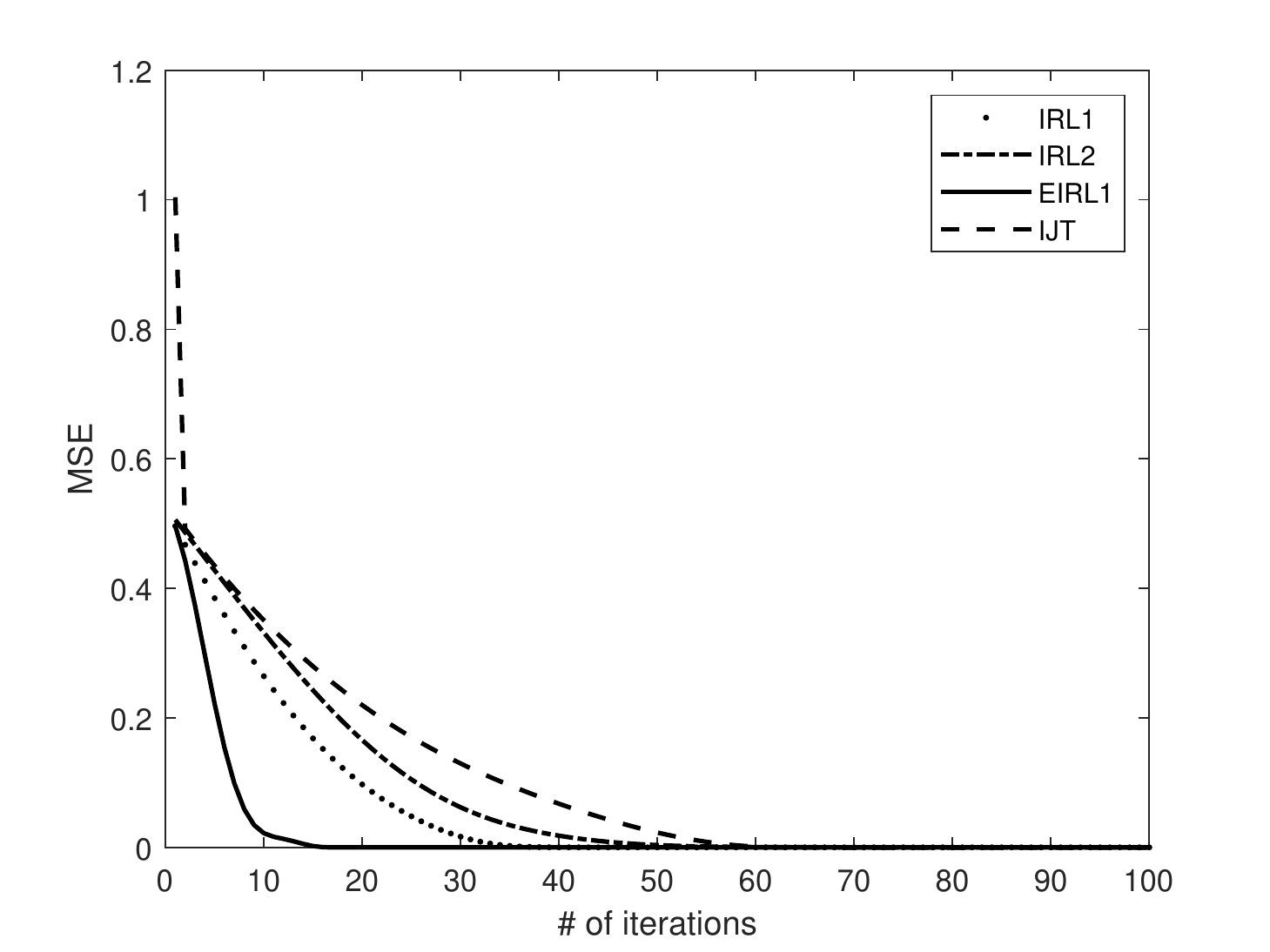} 
	} 
	\subfigure[$p=\frac{6}{7}$]{ 
		%\label{fig:subfig:b} %% label for second subfigure 
		\includegraphics[width=2.2in]{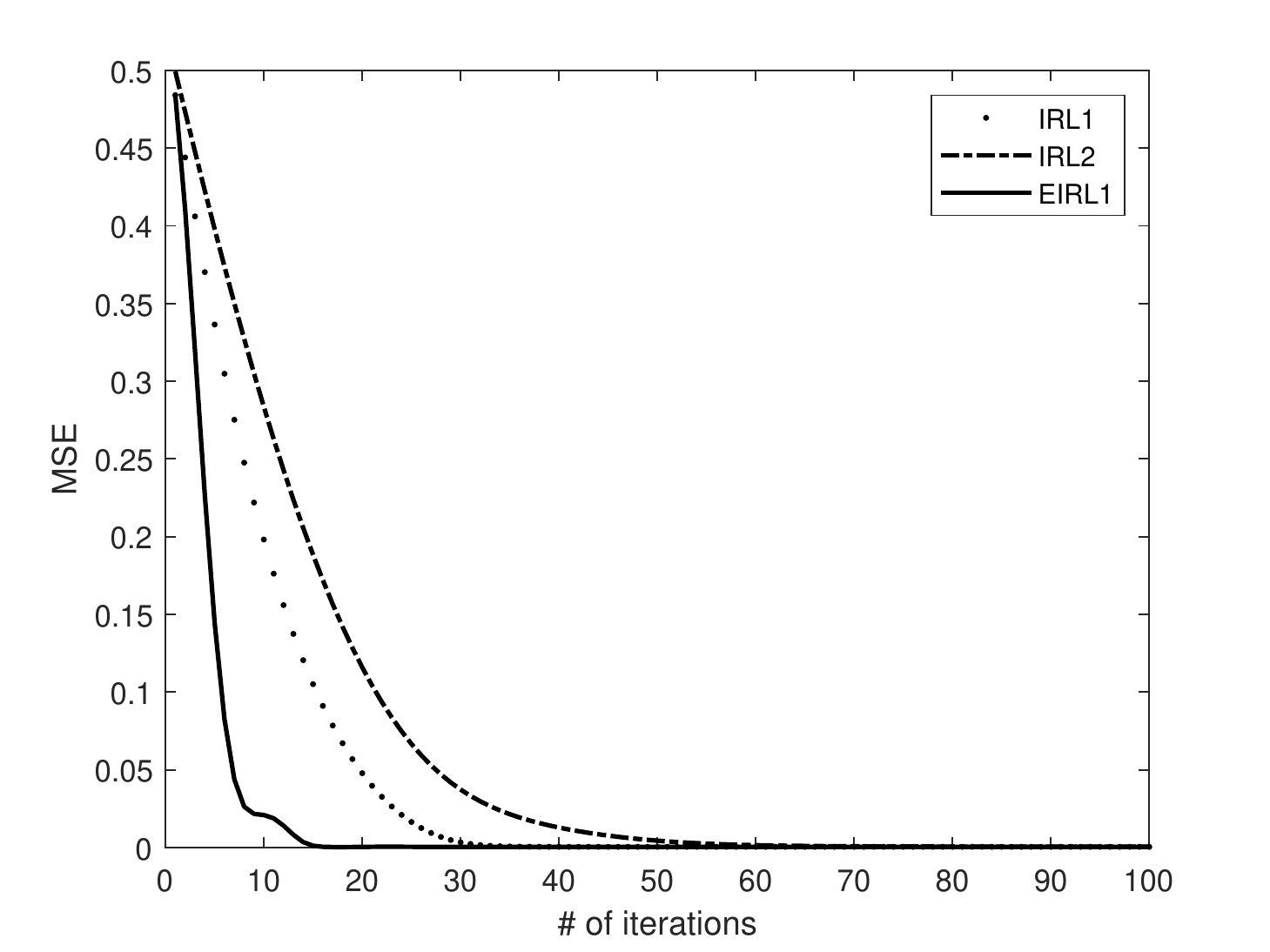} 
	} 
	\caption{Average MSE versus the number of iterations for IRL1, IRL2 and IJT algorithms. For $p=1/2, p=2/3$ and $p=6/7$, EIRL1 converges faster than other algorithms. For $p=6/7$, there is no explicit subproblem solution for IJT algorithm, therefore it is excluded from the figure.} 
	\label{fig.IRLIJT}
\end{figure}

\subsection{The role  of $\alpha$}
Since EIRL1 involves a parameter $\alpha$,  a  common question is about the selection of $\alpha$. 
Therefore, we test  EIRL1 with different $\alpha$  in different situations with sparsity varying from small to large  in our last experimental setting. 
In particular, we set  $K= 200, 500, 800$ for $x_{true}$, representing the situations with low, medium and high sparsity, respectively.  For each $K$, we test EIRL1 with 5 different values of $\alpha=0, 0.3, 0.5, 0.7, 0.9$. For each $\alpha$, we generate 50 random data sets $(A,x_{true},y)$, compute the average MSE versus iterates, and record the number of nonzero components in the found solution. We plot the evolution of MSE and the  box-plots about the number of nonzero components with different $\alpha$ in Figure \ref{fig.twostep}. It shows that larger $\alpha$ has faster  convergence and can find sparser solutions.
\begin{figure}[htbp] 
	\centering 
	\subfigure[MSE for $K=200$]{ 
		%\label{fig:subfig:a} %% label for first subfigure 
		\includegraphics[width=2.2in]{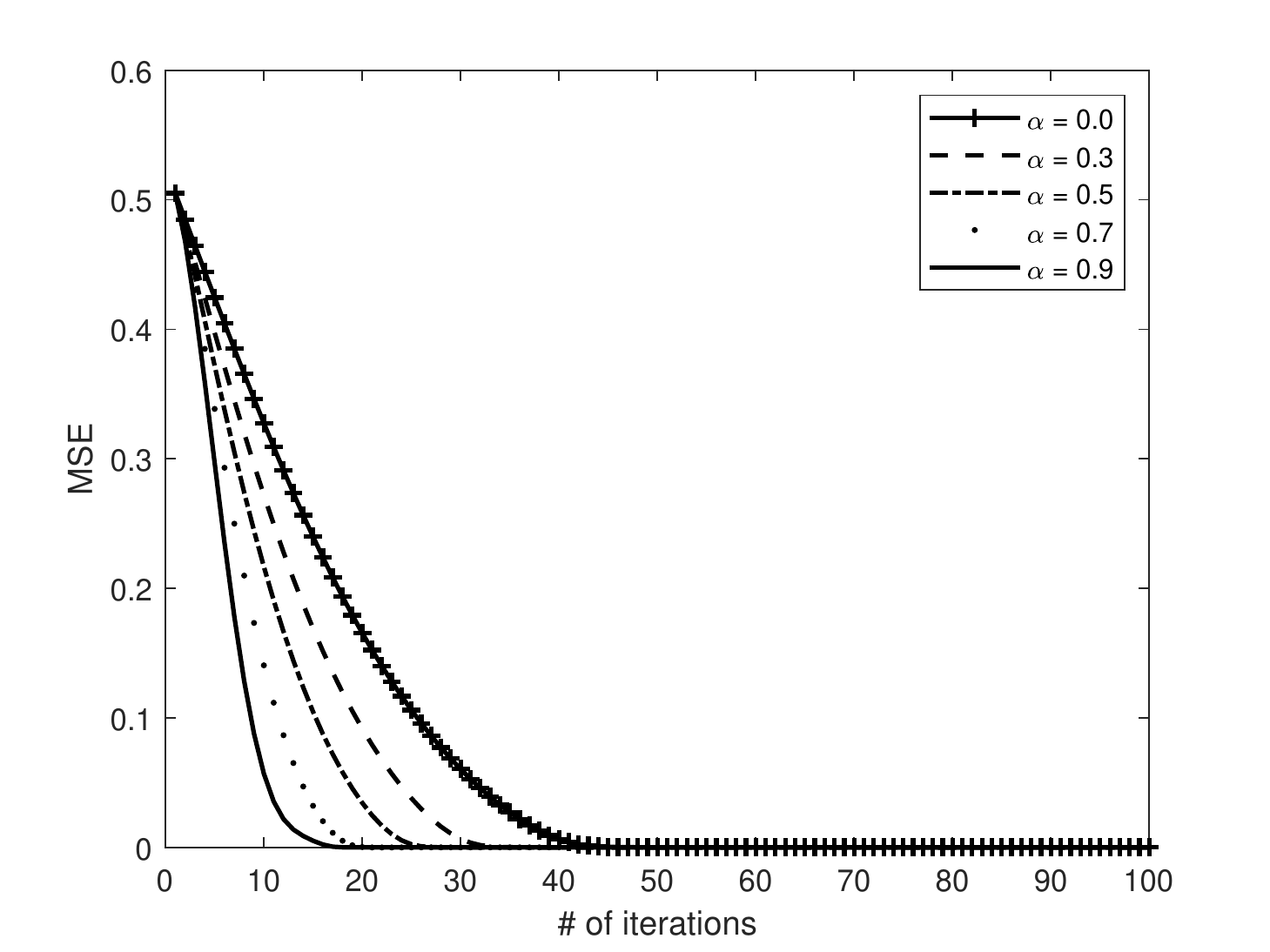} 
	} 
	\subfigure[Box-plot for $K=200$]{ 
		%\label{fig:subfig:b} %% label for second subfigure 
		\includegraphics[width=2.2in]{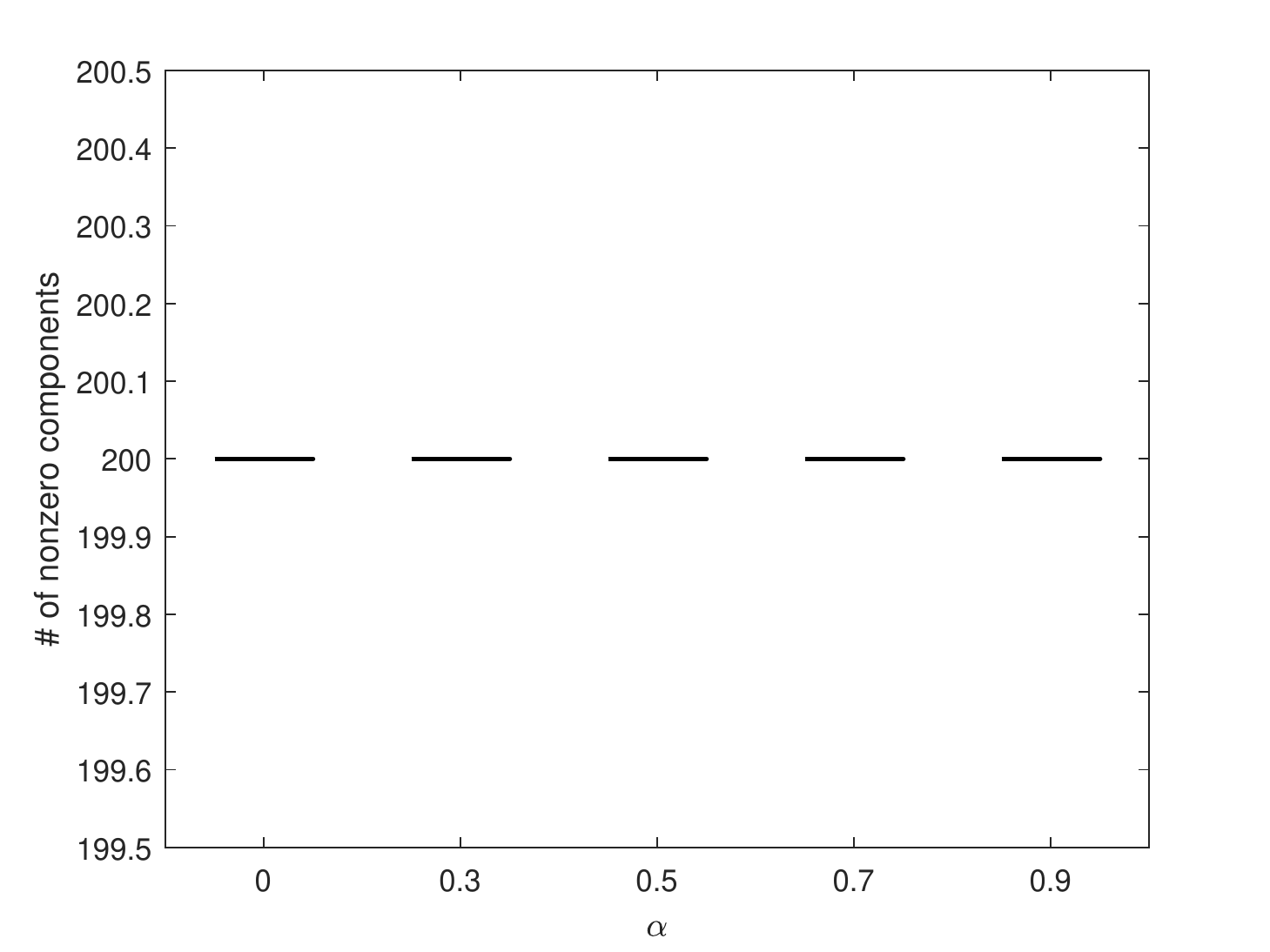} 
	} 
	\subfigure[MSE for $K=500$]{ 
		%\label{fig:subfig:a} %% label for first subfigure 
		\includegraphics[width=2.2in]{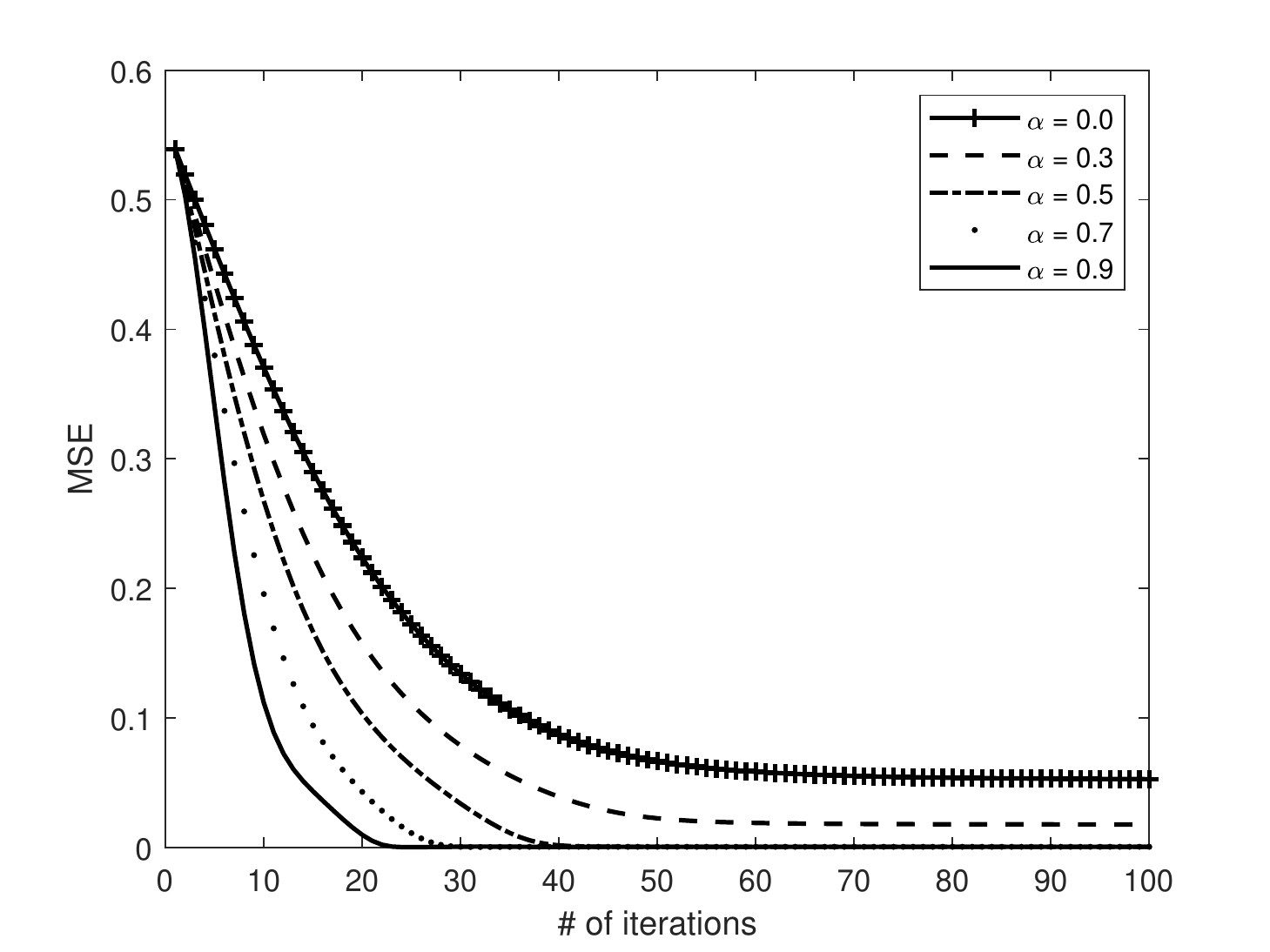} 
	} 
	\subfigure[Box-plot for $K=500$]{ 
		%\label{fig:subfig:b} %% label for second subfigure 
		\includegraphics[width=2.2in]{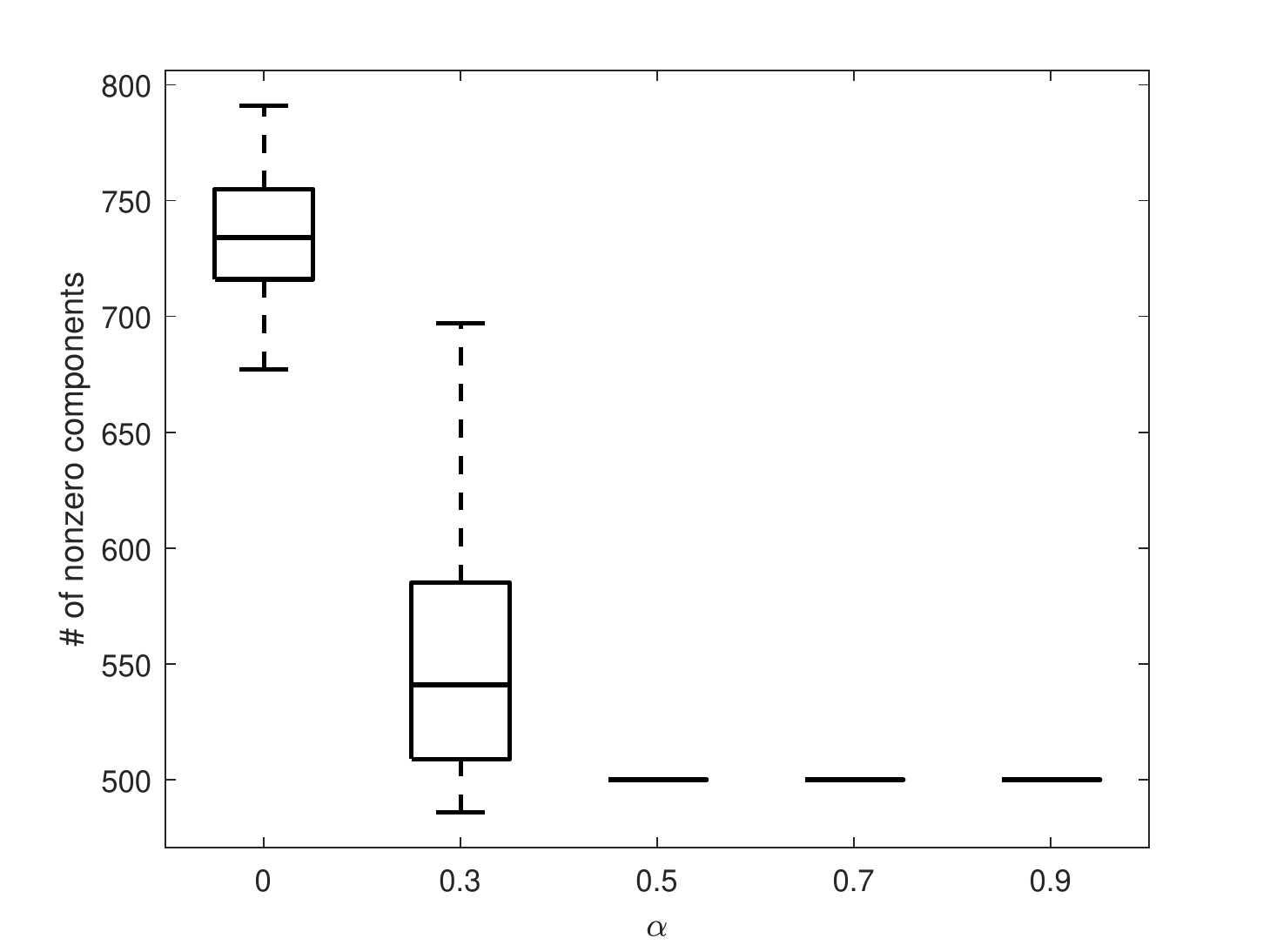} 
	} 
	\subfigure[MSE for $K=800$]{ 
		%\label{fig:subfig:a} %% label for first subfigure 
		\includegraphics[width=2.2in]{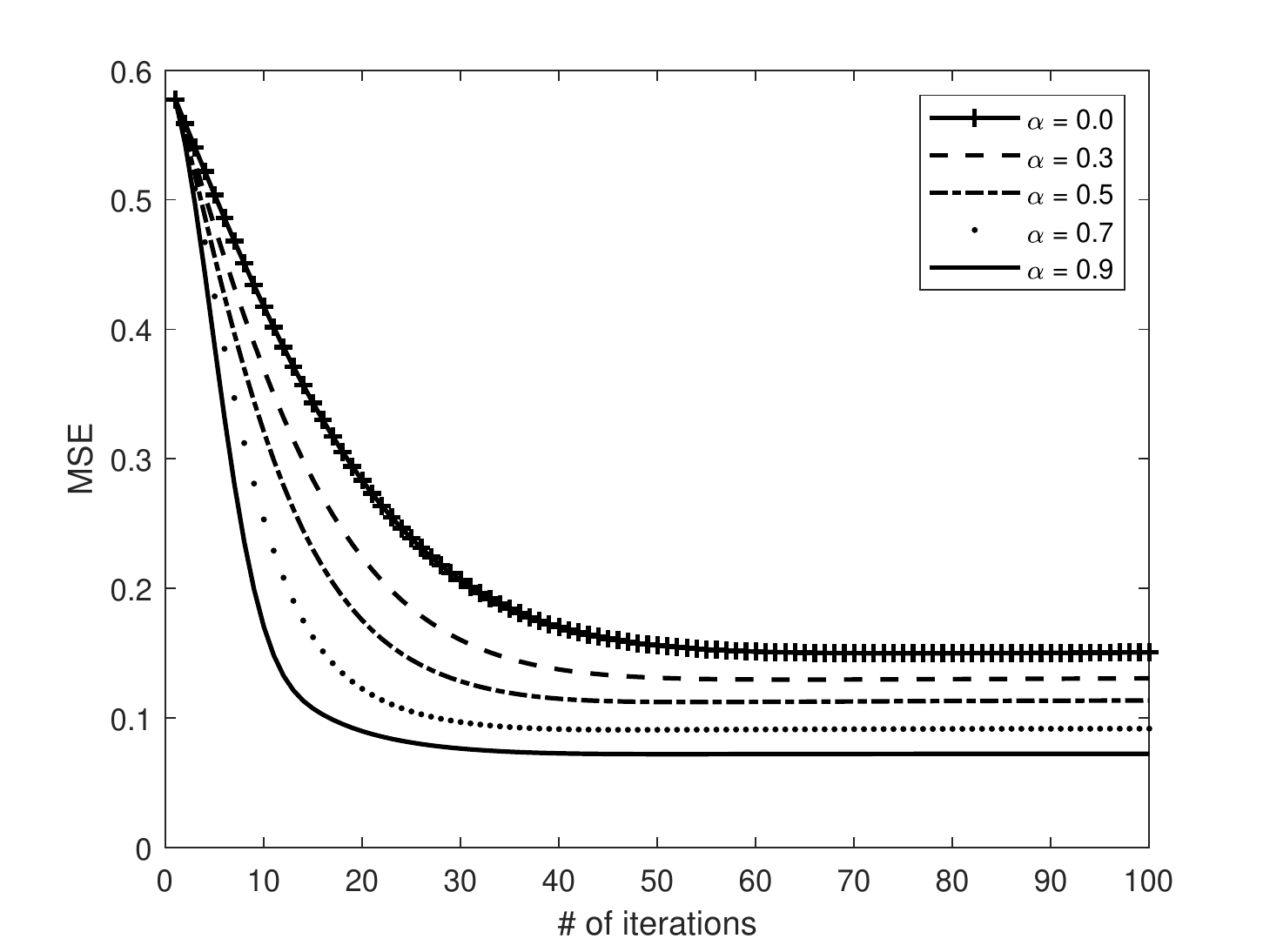} 
	} 
	\subfigure[Box-plot for $K=800$]{ 
		%\label{fig:subfig:b} %% label for second subfigure 
		\includegraphics[width=2.2in]{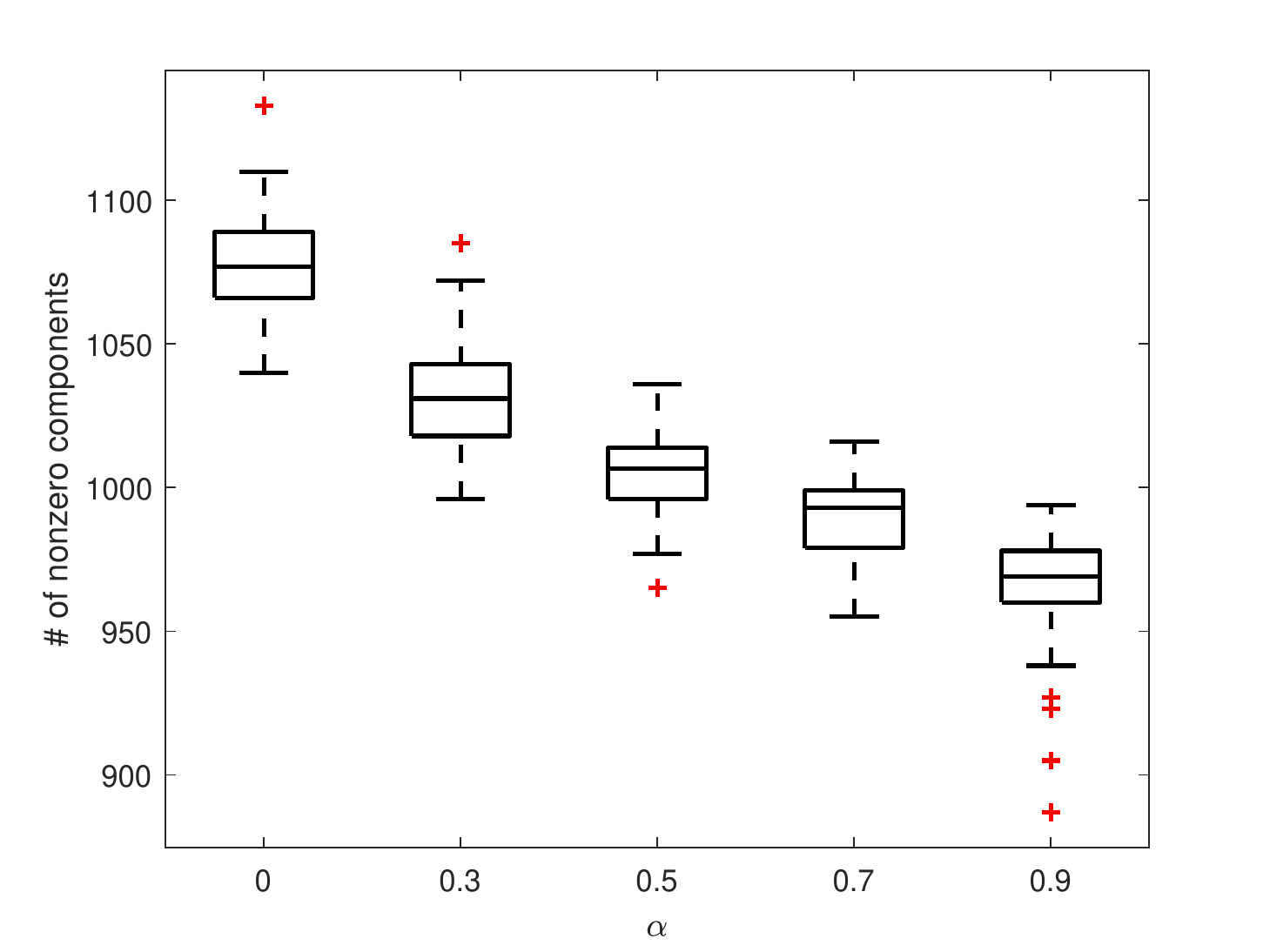} 
	} 
	\caption{The performance for different $\alpha$ when  $K=200, 500, 800$. It presents that larger $\alpha$ has better performance. 
	} 
	\label{fig.twostep}
\end{figure}

Notice that we require $0\le \alpha <1$ to guarantee the convergence of 
EIRL1. To find the optimal selection of $\alpha$,  we test more carefully with larger values $\alpha=$0.90, 0.93,0.95,0.97,0.99  with $K=800$ in Figure  \ref{fig.twostepproblem}.  It shows that $\alpha = 0.9$ outperforms other values of $\alpha$. 
%When $\alpha = 0.99$, there is an obvious hump in green line of figure \ref{fig.twostepproblem} (a), which causes the algorithm converge to worse solution. It may be because the Extrapolated IRL1 only guarantee $\psi(x^k,x^{k-1},\epsilon^k)$ is monotonically decrease. 
\begin{figure}[htbp] 
	\centering 
	\subfigure[$\log(\text{MSE})$ for $K=800$]{ 
		%\label{fig:subfig:a} %% label for first subfigure 
		\includegraphics[width=2.2in]{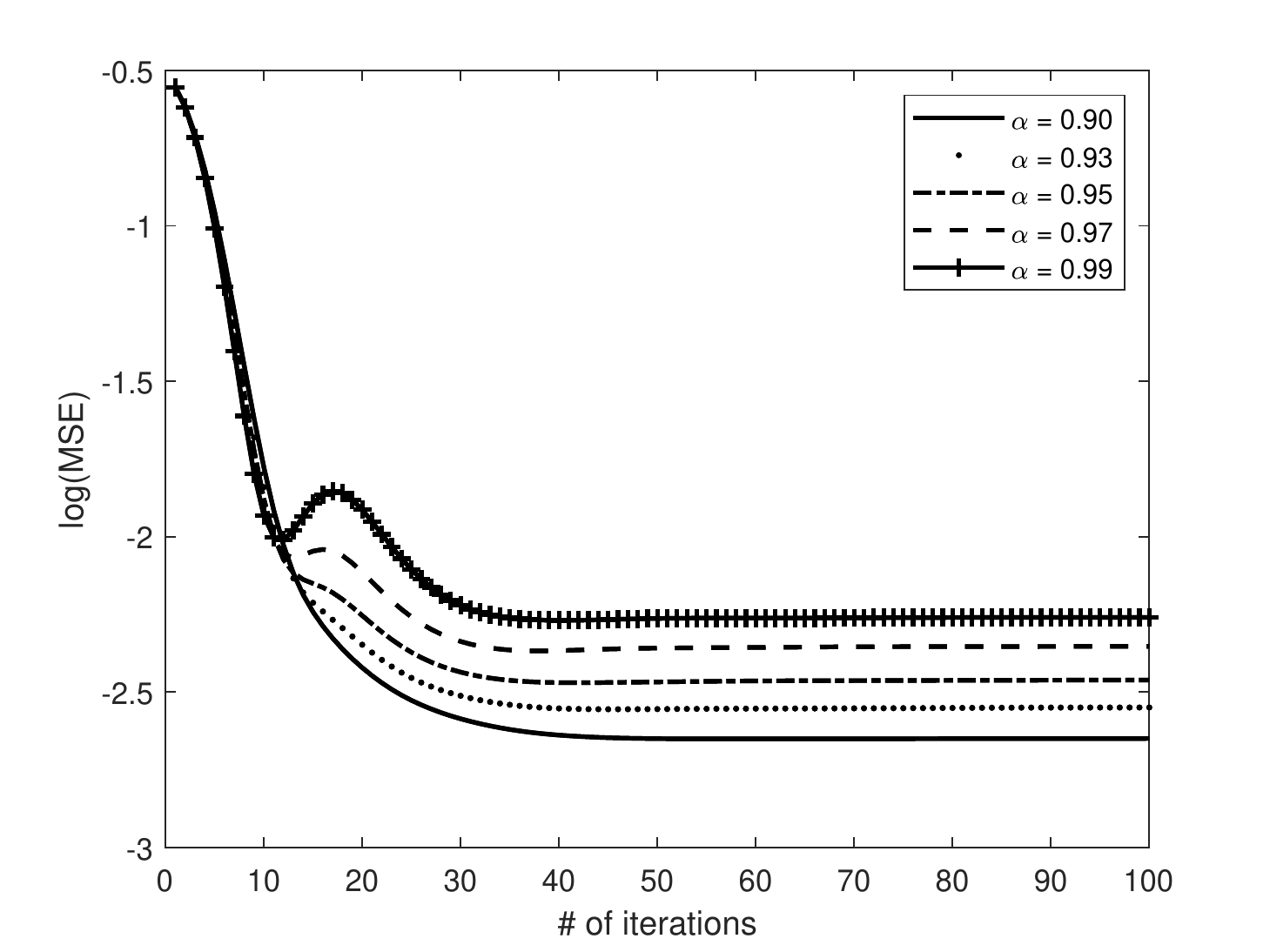} 
	} 
	\subfigure[Box-plot for $K=800$]{ 
		%\label{fig:subfig:b} %% label for second subfigure 
		\includegraphics[width=2.2in]{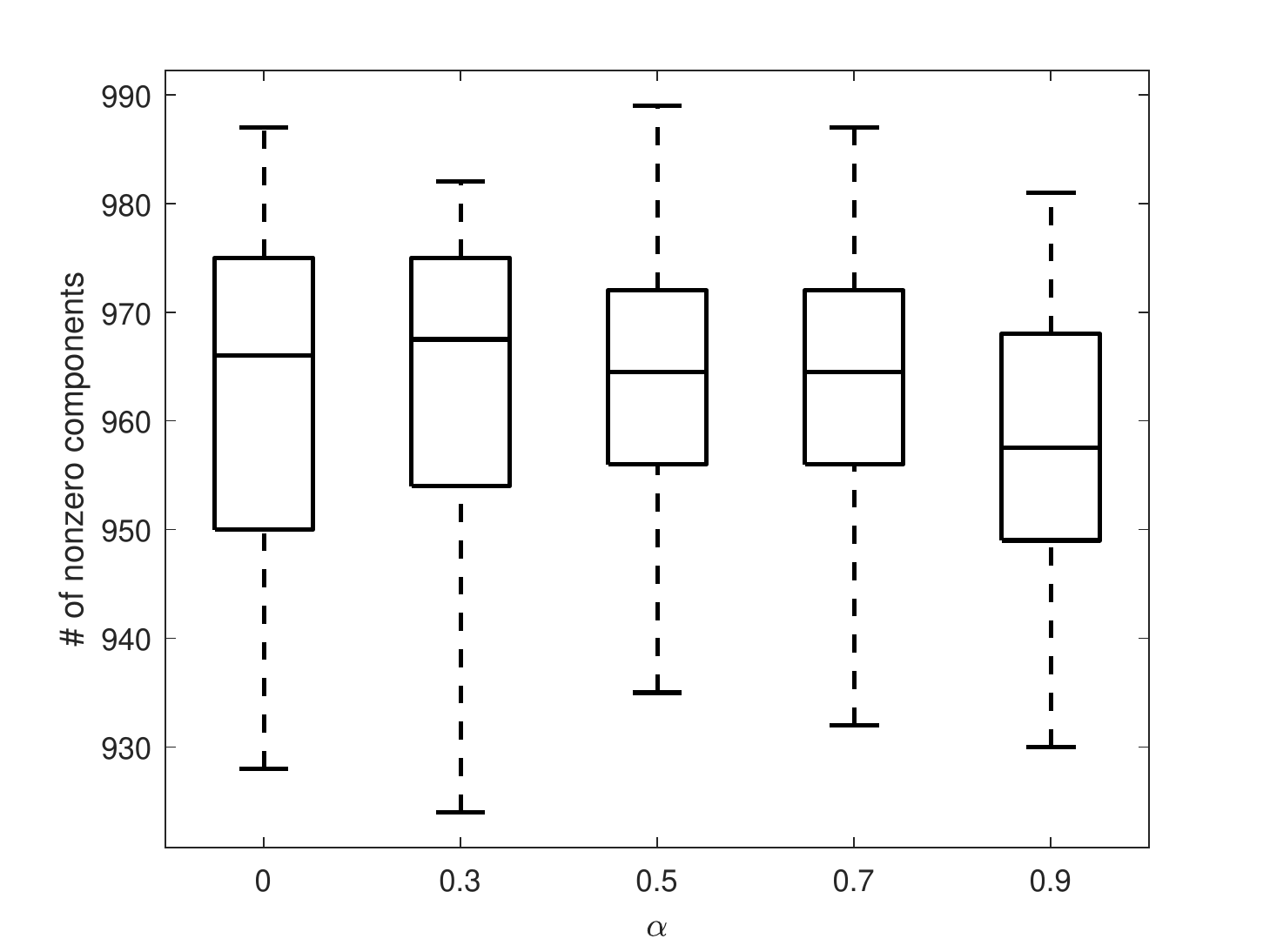} 
	} 
	\caption{The performance for different $\alpha$.  } 
	\label{fig.twostepproblem}
\end{figure}

In Figure  \ref{fig.nesterov}, we also test dynamically updating 
\[ \alpha^k=\frac{k-1}{k+2}\] for $k\geq 1$ and $\alpha^0 = 0$ for $K=800$, which is called Nesterov’s momentum coefficient  \cite{nesterov1983method}. We can see that this $\alpha^k$ have better performance than   $\alpha=0.7$ but slightly worse than 0.9. 
\begin{figure}[htbp] 
	\centering 
	\subfigure[$\log(\text{MSE})$ for $K=800$]{ 
		%\label{fig:subfig:a} %% label for first subfigure 
		\includegraphics[width=2.2in]{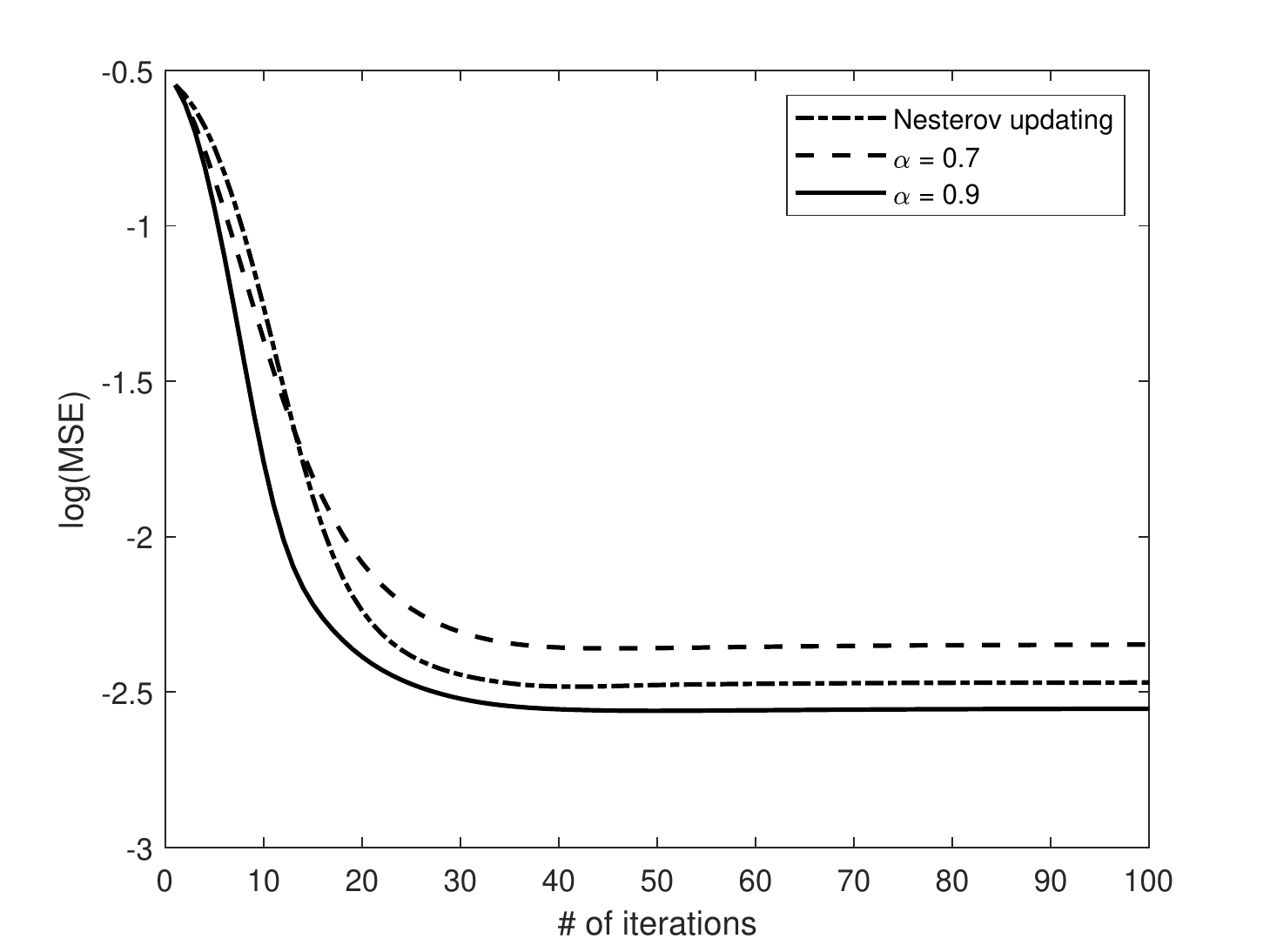}
	} 
	\subfigure[Box-plot for $K=800$]{ 
		%\label{fig:subfig:b} %% label for second subfigure 
		\includegraphics[width=2.2in]{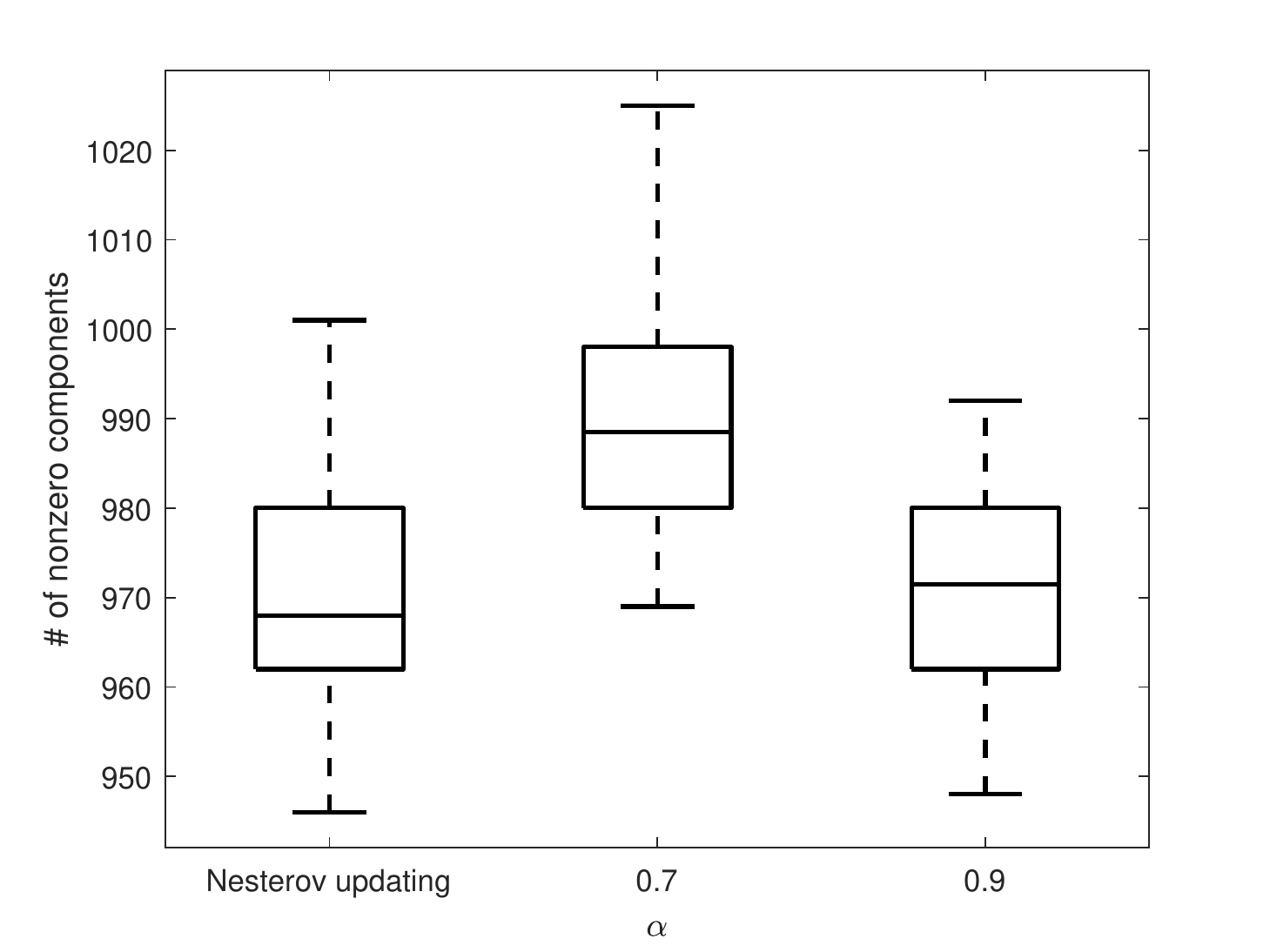} 
	} 
	\caption{The performance for setting Nesterov’s momentum coefficient and $\alpha=0.7, 0.9$.  }
	\label{fig.nesterov} 
\end{figure}

%\begin{acknowledgements}
%%If you'd like to thank anyone, place your comments here
%%and remove the percent signs.
%\end{acknowledgements}

% Authors must disclose all relationships or interests that 
% could have direct or potential influence or impart bias on 
% the work: 
%
% \section*{Conflict of interest}
%
% The authors declare that they have no conflict of interest.

% BibTeX users please use one of

\bibliographystyle{plainnat}
\bibliography{refs.bib}   % name your BibTeX data base

\begin{thebibliography}{37}
\providecommand{\natexlab}[1]{#1}
\providecommand{\url}[1]{\texttt{#1}}
\expandafter\ifx\csname urlstyle\endcsname\relax
  \providecommand{\doi}[1]{doi: #1}\else
  \providecommand{\doi}{doi: \begingroup \urlstyle{rm}\Url}\fi

\bibitem[Attouch and Bolte(2009)]{Attouch2009}
Hedy Attouch and J{\'e}r{\^o}me Bolte.
\newblock On the convergence of the proximal algorithm for nonsmooth functions
  involving analytic features.
\newblock \emph{Mathematical Programming}, 116\penalty0 (1):\penalty0 5--16,
  Jan 2009.
\newblock ISSN 1436-4646.
\newblock \doi{10.1007/s10107-007-0133-5}.
\newblock URL \url{https://doi.org/10.1007/s10107-007-0133-5}.

\bibitem[Attouch et~al.(2013)Attouch, Bolte, and
  Svaiter]{attouch2013convergence}
Hedy Attouch, J{\'e}r{\^o}me Bolte, and Benar~Fux Svaiter.
\newblock Convergence of descent methods for semi-algebraic and tame problems:
  proximal algorithms, forward--backward splitting, and regularized
  gauss--seidel methods.
\newblock \emph{Mathematical Programming}, 137\penalty0 (1-2):\penalty0
  91--129, 2013.

\bibitem[Auslender and Teboulle(2006)]{auslender2006interior}
Alfred Auslender and Marc Teboulle.
\newblock Interior gradient and proximal methods for convex and conic
  optimization.
\newblock \emph{SIAM Journal on Optimization}, 16\penalty0 (3):\penalty0
  697--725, 2006.

\bibitem[Beck and Teboulle(2009)]{beck2009fast}
Amir Beck and Marc Teboulle.
\newblock A fast iterative shrinkage-thresholding algorithm for linear inverse
  problems.
\newblock \emph{SIAM journal on imaging sciences}, 2\penalty0 (1):\penalty0
  183--202, 2009.

\bibitem[Becker et~al.(2011)Becker, Cand{\`e}s, and Grant]{becker2011templates}
Stephen~R Becker, Emmanuel~J Cand{\`e}s, and Michael~C Grant.
\newblock Templates for convex cone problems with applications to sparse signal
  recovery.
\newblock \emph{Mathematical programming computation}, 3\penalty0 (3):\penalty0
  165, 2011.

\bibitem[Bolte et~al.(2007{\natexlab{a}})Bolte, Daniilidis, and
  Lewis]{boltedanlew17}
J\' er\^ o~me. Bolte, Aris. Daniilidis, and Adrian. Lewis.
\newblock The {\l}ojasiewicz inequality for nonsmooth subanalytic functions
  with applications to subgradient dynamical systems.
\newblock \emph{SIAM Journal on Optimization}, 17\penalty0 (4):\penalty0
  1205--1223, 2007{\natexlab{a}}.
\newblock \doi{10.1137/050644641}.
\newblock URL \url{https://doi.org/10.1137/050644641}.

\bibitem[Bolte et~al.(2007{\natexlab{b}})Bolte, Daniilidis, Lewis, and
  Shiota]{bolte2007clarke}
J{\'e}r{\^o}me Bolte, Aris Daniilidis, Adrian Lewis, and Masahiro Shiota.
\newblock Clarke subgradients of stratifiable functions.
\newblock \emph{SIAM Journal on Optimization}, 18\penalty0 (2):\penalty0
  556--572, 2007{\natexlab{b}}.

\bibitem[Bolte et~al.(2014)Bolte, Sabach, and Teboulle]{Bolte2014}
J{\'e}r{\^o}me Bolte, Shoham Sabach, and Marc Teboulle.
\newblock Proximal alternating linearized minimization for nonconvex and
  nonsmooth problems.
\newblock \emph{Mathematical Programming}, 146\penalty0 (1):\penalty0 459--494,
  Aug 2014.
\newblock ISSN 1436-4646.
\newblock \doi{10.1007/s10107-013-0701-9}.
\newblock URL \url{https://doi.org/10.1007/s10107-013-0701-9}.

\bibitem[Candes et~al.(2008)Candes, Wakin, and Boyd]{candes2008enhancing}
Emmanuel~J Candes, Michael~B Wakin, and Stephen~P Boyd.
\newblock Enhancing sparsity by reweighted $\ell_1$ minimization.
\newblock \emph{Journal of Fourier analysis and applications}, 14\penalty0
  (5-6):\penalty0 877--905, 2008.

\bibitem[Chartrand and Yin(2008)]{chartrand2008iteratively}
Rick Chartrand and Wotao Yin.
\newblock Iteratively reweighted algorithms for compressive sensing.
\newblock In \emph{2008 IEEE International Conference on Acoustics, Speech and
  Signal Processing}, pages 3869--3872. IEEE, 2008.

\bibitem[Chen and Zhou(2010)]{chen2010convergence}
Xiaojun Chen and Weijun Zhou.
\newblock Convergence of reweighted $\ell_1$ minimization algorithms and unique
  solution of truncated lp minimization.
\newblock \emph{Department of Applied Mathematics, The Hong Kong Polytechnic
  University}, 2010.

\bibitem[Fan and Li(2001)]{fan2001variable}
Jianqing Fan and Runze Li.
\newblock Variable selection via nonconcave penalized likelihood and its oracle
  properties.
\newblock \emph{Journal of the American statistical Association}, 96\penalty0
  (456):\penalty0 1348--1360, 2001.

\bibitem[Figueiredo et~al.(2007)Figueiredo, Nowak, and
  Wright]{figueiredo2007gradient}
M{\'a}rio~AT Figueiredo, Robert~D Nowak, and Stephen~J Wright.
\newblock Gradient projection for sparse reconstruction: Application to
  compressed sensing and other inverse problems.
\newblock \emph{IEEE Journal of selected topics in signal processing},
  1\penalty0 (4):\penalty0 586--597, 2007.

\bibitem[Ge et~al.(2011)Ge, Jiang, and Ye]{ge2011note}
Dongdong Ge, Xiaoye Jiang, and Yinyu Ye.
\newblock A note on the complexity of $\ell_p$ minimization.
\newblock \emph{Mathematical programming}, 129\penalty0 (2):\penalty0 285--299,
  2011.

\bibitem[Jaggi(2011)]{jaggi2011sparse}
Martin Jaggi.
\newblock \emph{Sparse convex optimization methods for machine learning}.
\newblock PhD thesis, ETH Zurich, 2011.

\bibitem[Lai and Wang(2011)]{lai2011unconstrained}
Ming-Jun Lai and Jingyue Wang.
\newblock An unconstrained $ \ell_q $ minimization with $0<q\leq 1$ for sparse
  solution of underdetermined linear systems.
\newblock \emph{SIAM Journal on Optimization}, 21\penalty0 (1):\penalty0
  82--101, 2011.

\bibitem[Lan et~al.(2011)Lan, Lu, and Monteiro]{lan2011primal}
Guanghui Lan, Zhaosong Lu, and Renato~DC Monteiro.
\newblock Primal-dual first-order methods with $o(1/\epsilon)$
  iteration-complexity for cone programming.
\newblock \emph{Mathematical Programming}, 126\penalty0 (1):\penalty0 1--29,
  2011.

\bibitem[Lobo et~al.(2007)Lobo, Fazel, and Boyd]{lobo2007portfolio}
Miguel~Sousa Lobo, Maryam Fazel, and Stephen Boyd.
\newblock Portfolio optimization with linear and fixed transaction costs.
\newblock \emph{Annals of Operations Research}, 152\penalty0 (1):\penalty0
  341--365, 2007.

\bibitem[Lojasiewicz(1963)]{lojasiewicz1963propriete}
Stanislaw Lojasiewicz.
\newblock Une propri{\'e}t{\'e} topologique des sous-ensembles analytiques
  r{\'e}els.
\newblock \emph{Les {\'e}quations aux d{\'e}riv{\'e}es partielles},
  117:\penalty0 87--89, 1963.

\bibitem[Lu et~al.(2014)Lu, Wei, Lin, and Yan]{lu2014proximal}
Canyi Lu, Yunchao Wei, Zhouchen Lin, and Shuicheng Yan.
\newblock Proximal iteratively reweighted algorithm with multiple splitting for
  nonconvex sparsity optimization.
\newblock In \emph{Twenty-Eighth AAAI Conference on Artificial Intelligence},
  2014.

\bibitem[Lu(2014)]{lu2014iterative}
Zhaosong Lu.
\newblock Iterative reweighted minimization methods for $\ell_p$ regularized
  unconstrained nonlinear programming.
\newblock \emph{Mathematical Programming}, 147\penalty0 (1-2):\penalty0
  277--307, 2014.

\bibitem[Luo et~al.(2017)Luo, Liu, and Gao]{luo2017revisit}
Weixin Luo, Wen Liu, and Shenghua Gao.
\newblock A revisit of sparse coding based anomaly detection in stacked rnn
  framework.
\newblock In \emph{Proceedings of the IEEE International Conference on Computer
  Vision}, pages 341--349, 2017.

\bibitem[Lustig et~al.(2007)Lustig, Donoho, and Pauly]{lustig2007sparse}
Michael Lustig, David Donoho, and John~M Pauly.
\newblock Sparse mri: The application of compressed sensing for rapid mr
  imaging.
\newblock \emph{Magnetic Resonance in Medicine: An Official Journal of the
  International Society for Magnetic Resonance in Medicine}, 58\penalty0
  (6):\penalty0 1182--1195, 2007.

\bibitem[Mairal et~al.(2007)Mairal, Elad, and Sapiro]{mairal2007sparse}
Julien Mairal, Michael Elad, and Guillermo Sapiro.
\newblock Sparse representation for color image restoration.
\newblock \emph{IEEE Transactions on image processing}, 17\penalty0
  (1):\penalty0 53--69, 2007.

\bibitem[Mairal et~al.(2010)Mairal, Bach, Ponce, and Sapiro]{mairal2010online}
Julien Mairal, Francis Bach, Jean Ponce, and Guillermo Sapiro.
\newblock Online learning for matrix factorization and sparse coding.
\newblock \emph{Journal of Machine Learning Research}, 11\penalty0
  (Jan):\penalty0 19--60, 2010.

\bibitem[Nesterov(2013)]{nesterov2013gradient}
Yu~Nesterov.
\newblock Gradient methods for minimizing composite functions.
\newblock \emph{Mathematical Programming}, 140\penalty0 (1):\penalty0 125--161,
  2013.

\bibitem[Nesterov(1998)]{nesterov1998introductory}
Yurii Nesterov.
\newblock Introductory lectures on convex programming volume i: Basic course.
\newblock \emph{Lecture notes}, 3\penalty0 (4):\penalty0 5, 1998.

\bibitem[Nesterov(2009)]{nesterov2009primal}
Yurii Nesterov.
\newblock Primal-dual subgradient methods for convex problems.
\newblock \emph{Mathematical programming}, 120\penalty0 (1):\penalty0 221--259,
  2009.

\bibitem[Nesterov(1983)]{nesterov1983method}
Yurii~E Nesterov.
\newblock A method for solving the convex programming problem with convergence
  rate o (1/k\^{} 2).
\newblock In \emph{Dokl. akad. nauk Sssr}, volume 269, pages 543--547, 1983.

\bibitem[Tseng(2010)]{tseng2010approximation}
Paul Tseng.
\newblock Approximation accuracy, gradient methods, and error bound for
  structured convex optimization.
\newblock \emph{Mathematical Programming}, 125\penalty0 (2):\penalty0 263--295,
  2010.

\bibitem[Wakabayashi and Wakabayashi(2008)]{wakabayashi2008remarks}
Seiichiro Wakabayashi and S~Wakabayashi.
\newblock Remarks on semi-algebraic functions.
\newblock \emph{http://www. math. tsukuba. ac. jp/\~{} wkbysh/}, 2008.

\bibitem[Wang et~al.(2019)Wang, Zeng, and Wang]{wang2019relating}
Hao Wang, Hao Zeng, and Jiashan Wang.
\newblock Relating lp regularization and reweighted l1 regularization.
\newblock \emph{arXiv preprint arXiv:1912.00723}, 2019.

\bibitem[Wen et~al.(2018)Wen, Chen, and Pong]{wen2018proximal}
Bo~Wen, Xiaojun Chen, and Ting~Kei Pong.
\newblock A proximal difference-of-convex algorithm with extrapolation.
\newblock \emph{Computational optimization and applications}, 69\penalty0
  (2):\penalty0 297--324, 2018.

\bibitem[Yu and Pong(2019)]{yu2019iteratively}
Peiran Yu and Ting~Kei Pong.
\newblock Iteratively reweighted $\ell_1$ algorithms with extrapolation.
\newblock \emph{Computational Optimization and Applications}, 73\penalty0
  (2):\penalty0 353--386, 2019.

\bibitem[Zeng et~al.(2016)Zeng, Lin, and Xu]{zeng2016sparse}
Jinshan Zeng, Shaobo Lin, and Zongben Xu.
\newblock Sparse regularization: Convergence of iterative jumping thresholding
  algorithm.
\newblock \emph{IEEE Transactions on Signal Processing}, 64\penalty0
  (19):\penalty0 5106--5118, 2016.

\bibitem[Zeyde et~al.(2010)Zeyde, Elad, and Protter]{zeyde2010single}
Roman Zeyde, Michael Elad, and Matan Protter.
\newblock On single image scale-up using sparse-representations.
\newblock In \emph{International conference on curves and surfaces}, pages
  711--730. Springer, 2010.

\bibitem[Zhang et~al.(2010)]{zhang2010nearly}
Cun-Hui Zhang et~al.
\newblock Nearly unbiased variable selection under minimax concave penalty.
\newblock \emph{The Annals of statistics}, 38\penalty0 (2):\penalty0 894--942,
  2010.

\end{thebibliography}

\end{document}